\newtheorem{prop}{Proposition}[section]
\newtheorem{lemma}[prop]{Lemma}
\newtheorem{thm}[prop]{Theorem}
\newtheorem{cor}[prop]{Corollary}
\theoremstyle{definition}
\newtheorem{defn}[prop]{Definition}
\newtheorem{rmk}[prop]{Remark}
\newtheorem{ex}[prop]{Example}
\mathchardef\mhyphen="2D
\begin{document}

\title{Quotients of unstable subvarieties and moduli spaces of sheaves
of fixed Harder--Narasimhan type}

\author{Victoria Hoskins and Frances Kirwan}

\thanks{This work was supported by the Engineering and Physical Sciences
Research Council [grant number   GR/T01341/01].}
\maketitle

\renewcommand{\abstractnamefont}{\scshape}

\begin{abstract}

When a reductive group $G$ acts linearly on a complex projective scheme $X$ there is a stratification of $X$ into $G$-invariant locally closed subschemes, with an open stratum $X^{ss}$ formed by the semistable points in the sense of Mumford's geometric invariant theory which has a categorical quotient $X^{ss} \to X/\!/G$. In this article we describe a method for constructing quotients of the unstable strata. As an application, we construct moduli spaces of sheaves of fixed Harder--Narasimhan type with some extra data (an \lq $n$-rigidification') on a projective base.
\end{abstract}

\pagestyle{fancy}
\fancyhead{}
\fancyfoot{}
\fancyhead[CE]{\textsc{victoria hoskins and frances kirwan}}
\fancyhead[CO]{\textsc{\rightmark}}
\fancyhead[RO,LE]{\thepage}

\section{Introduction}\markright{introduction}

Let $X$ be a complex projective scheme and $G$ a complex reductive group acting
linearly on $X$ with respect to an ample line bundle. Mumford's geometric invariant theory (GIT) \cite{mumford} provides us with a projective scheme
$X/\!/G$ which is a categorical quotient of an open subscheme $X^{ss}$ of $X$, whose geometric points are the semistable points of $X$, by the action of $G$. 
This GIT quotient $X/\!/G$ contains an open subscheme $X^s/G$ which is a geometric quotient of the scheme $X^s$ of stable points for the linear action.

Associated to the linear action of $G$ on $X$ there is a stratification 
$\{S_\beta : \beta \in \mathcal{B} \}$ of $X$ into disjoint $G$-invariant locally closed subschemes, one of which is $X^{ss}$ \cite{kempf_ness,kirwan}. 
In this paper we consider the problem of finding quotients for each unstable stratum $S_\beta$ separately. For each
$\beta \in \mathcal{B}$ we find a categorical quotient of the $G$-action on the stratum $S_\beta$. 
However this categorical quotient is far from an orbit space in general. We attempt to rectify this by 
making small perturbations to a canonical linearisation on a projective completion $\hat{S}_\beta$ of $S_\beta$ and an associated affine bundle over $S_\beta$ and considering GIT quotients with respect to these perturbed linearisations.

We then apply this to construct moduli spaces of unstable sheaves on a complex projective scheme $W$ which have some additional data (depending on a choice of any sufficiently positive integer $n$) called an $n$-rigidification. There is a well-known construction due to Simpson \cite{simpson} of the moduli space of semistable pure sheaves on $W$ of fixed Hilbert polynomial as the GIT quotient of a linear action of a special linear group $G$ on a scheme $Q$ (closely related to a quot-scheme) which is $G$-equivariantly embedded in a projective space. 
This construction can be chosen so that elements of $Q$ which parametrise sheaves of a fixed Harder--Narasimhan type form a stratum in the stratification of $Q$ associated to the linear action of $G$ (modulo taking connected components of strata).
As above, we consider perturbations of the canonical linearisation on a projective completion of this stratum using a parameter $\theta$ which defines for us a notion of semistability for sheaves of this fixed Harder--Narasimhan type $\tau$. Finally for each $\tau$ we construct a moduli space of S-equivalence classes of $\theta$-semistable $n$-rigidified sheaves of fixed Harder--Narasimhan type $\tau$. 

The layout of this paper is as follows. $\S$\ref{norm sq strat} summarises the properties of the stratifications introduced in \cite{kempf_ness,kirwan} when $X$ is a nonsingular complex projective variety with a linear $G$-action.  In $\S$\ref{how to modify} we construct linearisations on a projective completion of a given stratum in this stratification and provide a categorical quotient of each unstable stratum. In $\S$\ref{ext proj sch} we observe that this construction can be extended without difficulty from varieties to schemes. $\S$\ref{simp constr} summarises Simpson's construction of moduli spaces of semistable sheaves and calculates the associated Hilbert--Mumford functions for one-parameter subgroups, while $\S$\ref{strat of Q} relates the stratification of the parameter scheme $Q$ to Harder--Narasimhan type. In $\S$\ref{nrigidified} we define what we mean by an $n$-rigidified sheaf. Finally in $\S$\ref{moduli unstable sheaves} we construct moduli spaces for $n$-rigidified sheaves of fixed Harder--Narasimhan type which are semistable with respect to a given parameter $\theta$.

\subsection*{Acknowledgements}\label{ackref}
Our thanks go to Dima Arinkin and to the referee for helpful comments on an earlier version of this paper.

\section{Stratifications of $X$}\label{norm sq strat}\markright{stratifications of $X$}

In this section we state the results needed from \cite{kirwan} for linear reductive group actions on nonsingular projective varieties. Let $G$ be a complex reductive group acting linearly on a smooth complex projective variety $X$  with respect to an  ample line bundle $\mathcal{L}$. Abusing notation
we will use ${\mathcal{L}}$ to denote both the linearisation (the lift of the
$G$-action to the line bundle) and the line bundle itself.
For the purposes of GIT we can assume without loss of generality that
$\mathcal{L}$ is very ample, so that $X$ is embedded in a projective space $\mathbb{P}^n = \mathbb{P}(H^0(X, \mathcal{L})^*)$ and the action of  $G$ is given by a homomorphism $\rho : G \rightarrow \mathrm{GL}(n+1)$. 
The associated GIT quotient $X/\!/G = X/\!/_{{\mathcal{L}}} G $ is topologically the semistable set $X^{ss}=X^{ss}(\mathcal{L})$ modulo S-equivalence, where $x$ and $y$ in $X^{ss}$ are S-equivalent if and only if the closures of their $G$-orbits meet in $X^{ss}$.
The fact that $G$ is a complex reductive group means that it is the complexification of a maximal compact subgroup $K$, and we assume without loss of generality that $K$ acts unitarily on $\mathbb{P}^n$ via $\rho : K \rightarrow \mathrm{U}(n+1)$. 

Since $X$ is nonsingular, the Fubini-Study metric on $\mathbb{P}^n$ gives $X$ a K\"ahler structure and the K\"ahler form $\omega$ is a $K$-invariant symplectic form on $X$.  Let $\mathfrak{K}$ denote the Lie algebra of $K$; the action of $K$ on the symplectic manifold $(X,\omega)$ is Hamiltonian with moment map $\mu : X \rightarrow \mathfrak{K}^*$ defined by
\[ \mu(x) := \rho^*\left(\frac{x^{\star}\bar{x}^{\star t}}{2 \pi i ||x^\star||^2}\right) \]
where $x^\star \in \mathbb{C}^{n+1}$ lies over $x \in \mathbb{P}^n$ and $\rho^* : \mathfrak{u}(n+1)^* \rightarrow \mathfrak{K}^*$ is dual to $\mathrm{Lie}\rho$. 
Then $x \in X$ is semistable if and only if the closure of its $G$-orbit meets
$\mu^{-1}(0)$, and the inclusion of $\mu^{-1}(0)$ in $X^{ss}$ induces a homeomorphism from the symplectic quotient $\mu^{-1} (0) / K$ to the GIT quotient $X /\!/ G$.

We fix an inner product on the Lie algebra $\mathfrak{K}$ which is invariant under the adjoint action of $K$, and use it to identify $\mathfrak{K}^*$
with $\mathfrak{K}$. The norm square of the moment map $|| \mu ||^2 : X \rightarrow \mathbb{R}$ with respect to this inner product induces a Morse-type stratification of $X$ into $G$-invariant locally closed nonsingular subvarieties 
\[ X = \bigsqcup_{\beta \in \mathcal{B}} S_{\beta} \]
where the indexing set $\mathcal{B}$ is a finite set of adjoint orbits in the Lie algebra $\mathfrak{K}$ (or equivalently a finite set of points in a fixed
positive Weyl chamber $\mathfrak{t}_+$ in $\mathfrak{K}$). In particular $0 \in \mathcal{B}$ indexes the open stratum $S_0$, which is equal to the semistable subset $X^{ss}$.

\begin{rmk}
It is important to note that this stratification depends on the choice of linearisation and the choice of invariant inner product on $\mathfrak{K}$. However, the stratification is unchanged if the ample line bundle $\mathcal{L}$ is replaced with $\mathcal{L}^{\otimes m}$ for any integer $m>0$, which means that we can work with rational linearisations $\mathcal{L}^{\otimes q}$ for $q \in \mathbb{Q} \cap (0, \infty)$.
\end{rmk}

\begin{rmk} The gradient flow of $|| \mu ||^2$ from any $x \in X$ is contained in the $G$-orbit of $x$, and so the stratification of $X$ is given by intersecting $X$ with the stratification of the  ambient projective space $\mathbb{P}^n$. 
\end{rmk}

\begin{rmk} \label{rmk2.2}
If $X$ is singular (and/or quasi-projective rather than projective) we still get a stratification of $X$ into $G$-invariant locally closed subvarieties, which may be singular, by intersecting $X$ with the stratification of the ambient projective space $\mathbb{P}^n$. Indeed, as we will see in $\S$\ref{ext proj sch}, we can allow $X$ to be any $G$-invariant projective subscheme of $\mathbb{P}^n$ and obtain a stratification of $X$ by intersecting $X$ with the stratification of $\mathbb{P}^n$.
\end{rmk} 

The strata indexed by nonzero $\beta \in \mathcal{B}$ have an inductive description in terms of semistable sets for actions of reductive subgroups of $G$ on subvarieties of $X$ \cite{kirwan}. We fix a maximal torus $T$ of $K$ and let $H := T_{\mathbb{C}}$ be the complexification of $T$, which is a maximal torus of $G = K_{\mathbb{C}}$. We also fix a positive Weyl chamber $\mathfrak{t}_+$ in the Lie algebra $\mathfrak{t}$ of $T$. 
The restriction $\rho|_T : T \rightarrow {\rm U}(n+1)$ is diagonalisable with
weights \[\alpha_0, \dots , \alpha_n : T \rightarrow S^1.\] If we identify the tangent space of $S^1$ at the identity with the line $2\pi i \mathbb{R}$ in the complex plane, and identify $2\pi i \mathbb{R}$ with $\mathbb{R}$ in the natural way, then by taking the derivative of $\alpha_j$ at the identity we get an element of the dual of the Lie algebra $\mathfrak{t}$ which we also call $\alpha_j$. 
The index set $\mathcal{B}$ is defined in \cite{kirwan} to be the set of $\beta \in \mathfrak{t}_+$  such that $\beta$ is the closest point to zero of the convex hull in $\mathfrak{t}$ of some nonempty subset of the set of weights $\{\alpha_0, \dots \alpha_n\}$.

\begin{rmk}
Since the set of weights $\{\alpha_0, \dots \alpha_n\}$ is invariant under the Weyl group, $\mathcal{B}$ can also be identified with the set of $K$-orbits in
$\mathfrak{K}$ of closest points to 0 of convex hulls of subsets of $\{\alpha_0, \dots \alpha_n\}$.
\end{rmk}

If $\beta \in \mathcal{B}$ we define $Z_\beta$ to be
\begin{equation} \label{zedbeta} Z_\beta := X \cap \{[x_o : \dots : x_n] \in \mathbb{P}^n : x_i = 0 \: \mathrm{if} \: \alpha_i \cdot \beta \neq || \beta ||^2 \}.\end{equation}
 $Z_\beta$ also has a symplectic description as the set of critical points for the function $\mu_\beta(x) := \mu(x) \cdot \beta$ on which $\mu_{\beta} $ takes the value $||\beta||^2$. By \cite{kirwan} Lemma 3.15 the critical point set of $||\mu||^2$ is the disjoint union over $\beta \in \mathcal{B}$ of the closed subsets
\begin{equation} \label{crit}  C_\beta := K(Z_\beta \cap \mu^{-1}(\beta)) . \end{equation}
The stratum $S_\beta$ corresponding to the critical point set $C_\beta$ is the set of points in $X$ whose path of steepest descent under $||\mu||^2$ has a limit point in $C_\beta$. 

\begin{rmk}
The stratum $S_\beta$ depends only on the adjoint orbit of $\beta$, but in order to define $Z_\beta$ we need to fix an element in that adjoint orbit.
\end{rmk}

The strata have an alternative algebraic description. Let $\mathrm{Stab} \beta$ be the stabiliser of $\beta$ under the adjoint action of $G$ on its Lie algebra $\mathfrak{g}$; then $Z_\beta$ is $\mathrm{Stab} \beta$-invariant (\cite{kirwan} $\S$4.8). We consider the action of $\mathrm{Stab }\beta$ on $Z_\beta$ with respect to the original linearisation twisted by the character $-\beta$ of 
$\mathrm{Stab} \beta$, so that the semistable set $Z_\beta^{ss}$ with respect to this modified linearisation is equal to the open stratum for the Morse stratification of the function $|| \mu - \beta ||^2$ on $Z_\beta$.   Let 
\begin{equation} \label{ybeta} Y_\beta := X \cap \left\{ [x_0 : \dots : x_n ] \in \mathbb{P}^n : \begin{array}{l} x_i = 0 \: \mathrm{if} \: \alpha_i \cdot \beta < || \beta ||^2  \: \mathrm{and} \: x_i \neq 0 \:  \\ \mathrm{for \: some \: } i \: \mathrm{such \: that \: } \alpha_i \cdot \beta = || \beta ||^2  \end{array}\right\} \end{equation}
be the set of points in $X$ whose corresponding weights are all on the opposite side to the origin of the hyperplane to $\beta$ and such that at least one of the weights lies on the hyperplane to $\beta$. In the symplectic description, $Y_\beta$ is the set of points in $X$ whose path of steepest descent under $\mu_\beta$ has limit in $Z_\beta$. There is an obvious surjection $p_\beta : Y_\beta \rightarrow Z_\beta $ which is a retraction onto $Z_\beta$. We define $Y_\beta^{ss}= p_\beta^{-1}(Z_\beta^{ss})$; then by \cite{kirwan} Theorem 6.18
\[ S_\beta = G Y_\beta^{ss}. \]

The positive Weyl chamber $\mathfrak{t}_+$ corresponds to a choice of positive roots 
 \[ \Phi_+: = \{ \alpha \in \Phi : \alpha \cdot \eta \geq 0 \mathrm{\:for \: all \: } \eta \in \mathfrak{t}_+ \} \]
where $\Phi \subset \mathfrak{t}^*$ is the set of roots coming from the adjoint action of $T$ on $\mathfrak{g}$. This in turn corresponds to a Borel subgroup $B=B_+$ of $G$ such that the Lie algebra $\mathfrak{b}_+$ of $B_+$ is given by
\[ \mathfrak{b}_+:= \mathfrak{h} \oplus \bigoplus_{\alpha \in \Phi_+} \mathfrak{g}_\alpha .\]
For $\beta \in \mathfrak{t}_+$ we construct a parabolic subgroup $P_\beta:= B_+\mathrm{Stab} \beta$ which may also be defined as
\[ P_\beta : = \{ g \in G : \lim_{t \rightarrow - \infty} \exp(it\beta) \: g \: \exp(it\beta)^{-1} \mathrm{\: exists \: in \: } G \}.\]
The subsets $Y_{\beta}^{ss}$ and $Y_\beta$ are $P_\beta$-invariant (see \cite{kirwan} Lemma 6.10) and by \cite{kirwan} Theorem 6.18 there is an isomorphism
\[ S_\beta \cong G \times_{P_\beta} Y_\beta^{ss}.\]

\begin{rmk} \label{KeN}
This stratification can also be described in terms of the work of Kempf and Ness \cite{kempf_ness} and Hesselink \cite{hesselink}. The Hilbert--Mumford criterion gives a test for (semi-)stability in terms of limits of one-parameter subgroups (1-PSs) acting on a given point $x \in X$. Given a 1-PS $\lambda$, we define $\mu(x, \lambda)$ to be the integer equal to the weight of the $\mathbb{C}^*$-action induced by this 1-PS on the fibre $\mathcal{L}_{x_0}$ where $x_0 = \lim_{t \rightarrow 0} \lambda(t) \cdot x$. We call $\mu(x, \lambda)$ the Hilbert--Mumford function and the Hilbert--Mumford criterion states that $x$ is semistable if and only if $\mu(x, \lambda) \geq 0$ for all 1-PSs. A point $x$ is unstable if and only if it fails the Hilbert--Mumford criterion for at least one 1-PS, and there is a notion of an {\it adapted} 1-PS for this point: that is, a non-divisible 1-PS $\lambda$ for which the quantity $\mu(\lambda,x) / || \lambda||$ is minimised.
The set $\wedge^{\mathcal{L}}(x)$ of 1-PSs which are adapted to $x$ is studied by Kempf \cite{kempf}, who shows that $\wedge^{\mathcal{L}}(x)$ is a full conjugacy class of 1-PSs in a parabolic subgroup $P_x$ of $G$. In fact for each $\lambda \in \wedge^{\mathcal{L}}(x)$,
\[ P_x= P(\lambda) : = \{ g \in G : \lim_{t \rightarrow 0} \lambda(t) g \lambda(t)^{-1} \mathrm{\: exists \: in \: } G \}. \]
These sets of 1-PSs give us a stratification of the unstable locus $X -X^{ss}$
\cite{kempf_ness}, which agrees with the stratification $\{S_{\beta}:\beta \in \mathcal{B}\}$ described above, as
follows.

Each $\beta \in \mathcal{B}$ is rational in the sense that there is a natural number $m>0$ such that $m\beta$ defines a 1-PS $\mathbb{C}^* \to H = T_{\mathbb{C}}$ whose restriction to $S^1 \to T$ has derivative at the identity
\[\mathbb{R} \cong 2\pi i \mathbb{R} \cong {\rm Lie}S^1 \to \mathfrak{t}\]
sending 1 to $m\beta$. For any rational $\beta \in \mathfrak{t}$ let $\lambda_\beta: \mathbb{C}^* \to H$ be the unique
non-divisible 1-PS which is defined by $q\beta$ for some positive rational number $q$. Then if $\beta \in \mathcal{B} \setminus \{ 0 \}$ 
we have 
\[P_\beta = P(\lambda_{\beta})\]
and $\lambda_{\beta}$ is a 1-PS adapted to $x$.
\end{rmk}

\section{Quotients of the unstable strata}\label{how to modify}\markright{quotients of the unstable strata}

Let $\beta \in \mathcal{B} \setminus \{ 0 \}$ be a nonzero index 
for the stratification $\{S_\beta: \beta \in \mathcal{B}\}$ and consider the projective completion 
\[ \hat{S_\beta} : =G \times_{P_\beta} \overline{ Y_\beta} \subset G \times_{P_\beta} X \]
of the stratum $S_\beta \cong G \times_{P_\beta} Y_\beta^{ss}$, where $\overline{ Y_\beta}$ is the closure of $Y_\beta^{ss}$ in $X$.

\begin{rmk} \label{disconn} It is always the case that
\[\overline{ Y_\beta} \subseteq X \cap \{ [x_0 : \cdots : x_n] : x_j = 0 \mathrm{\: if \:} \alpha_j \cdot \beta < ||\beta||^2 \}. \]
We often have equality here (for example when $X=\mathbb{P}^n$) but
it might be the case, for example, that  
$X \cap \{ [x_0 : \cdots x_n] : x_j = 0 \mathrm{\: if \:} \alpha_j \cdot \beta < ||\beta||^2 \}$ has connected components which do not meet $Y_\beta$.

It may also be the case that $Z_\beta, Y_\beta$ and $S_\beta$ are disconnected
(cf.  \cite{kirwan} $\S$5),
in which case we can, if we wish, refine the stratification by replacing $Z_\beta$ with its
connected components $Z_{\beta, j}$, say, and setting $S_{\beta,j} = G Y_{\beta,j}^{ss}$ where $Y_{\beta,j}^{ss} = p_\beta^{-1}(Z_{\beta,j}^{ss})$
and $Z_{\beta,j}^{ss} = Z_{\beta,j} \cap Z_{\beta}^{ss}$.
Then each $S_{\beta,j}$ will be a connected component of $S_\beta$ (so long
as $Z_{\beta,j}^{ss}$ is non-empty). In what follows we will work with $S_\beta$
for simplicity of notation,
but we could equally well work with its connected components separately.
\end{rmk}

The action of $P_\beta$ on $X$ extends to an action of $G$ on $X$ so there is a natural isomorphism
\[ G \times_{P_\beta} X \cong G/P_\beta \times X\]
\[ (g,x ) \mapsto (gP_\beta, g \cdot x). \]
In order to find new linearisations on $\hat{S_\beta}$ we can consider linearisations on $G \times_{P_\beta} X \cong G/P_\beta \times X$ and restrict them to $\hat{S_\beta}$. The quotient $G/P_\beta$ is a partial flag variety and linearisations of the $G$-actions on such varieties are well understood.

\subsection{Line bundles on partial flag varieties $G /P$}\label{gen parabolic}

We review the construction of line bundles on partial flag varieties; for more detailed information see \cite{flag,l_s_3,l_s_4,L_s_5}. 

For the moment we assume that $G$ is semisimple and simply connected. Fix sets of positive roots $\Phi_+ \subset \Phi$ and simple roots $\Pi$. Let $\omega_i$ denote the fundamental dominant weight associated to a simple root $\alpha_i$. If $\lambda = \sum a_i \omega_i$ is a dominant weight then define
\[ \Pi_\lambda : = \{ \alpha_i \in \Pi : a_i = 0 \} \subset \Pi .\]
Let $\lambda$ also denote the corresponding one-parameter subgroup; then the parabolic subgroup $P(\lambda)$ associated to the 1-PS $\lambda$ has associated simple roots 
\[ \Pi_{P(\lambda)} := \{ \alpha_i \in \Pi: -\alpha_i \mathrm{\: is \: a \: root \: of \:} P(\lambda) \} \subset \Pi \]
and these sets agree, so that $\Pi_\lambda = \Pi_{P(\lambda)}$.

A character $\chi : H \rightarrow \mathbb{C}^* $ extends to $P(\lambda)$ if and only if $\chi \cdot \alpha^\vee = 0$ for all coroots $\alpha^\vee$ such that $\alpha \in \Pi_{P(\lambda)}$. The weights naturally correspond to characters and the character defined by $\lambda$ extends to $P(\lambda)$ since
\[ \lambda \cdot \alpha_i^\vee = a_i = 0 \mathrm{\: \: for \: \: all \: \:} \alpha_i \in \Pi_{P(\lambda)} \]
by the definition of this set.
We let $\lambda$ also denote the associated character of $P(\lambda)$ and define a line bundle $\mathcal{L}(\lambda)$ on $G/P(\lambda)$ to be the line bundle associated to the character $\lambda^{-1}$; that is,
\[ \begin{array}{c} \mathcal{L}(\lambda) := G \times_{P(\lambda)} \mathbb{C} \\ \downarrow \\ G/P(\lambda) \end{array} \]
where $(g,z)$ and $ (gp, \lambda(p)z)$ are identified for all $p \in P(\lambda)$. The sections of $ \mathcal{L}(\lambda)$ are given by
\[ H^0( G / P(\lambda),\mathcal{L}(\lambda)) = \{ f: G \rightarrow \mathbb{C} : f(gp) =\lambda(p)f(g) \mathrm{\: for \: all \;} g \in G, p \in P \} \]
and the natural left $G$-action gives this vector space a $G$-module structure. Let $V(\lambda)$ denote the representation of $G$ of highest weight $\lambda$. By the Borel--Weil--Bott theorem \cite{bott}, there is an isomorphism of $G$-modules
\[ H^0( G / P(\lambda),\mathcal{L}(\lambda)) \cong V(\lambda)^*. \]

The line bundle $\mathcal{L}(\lambda)$ is very ample if and only if 
\[ \lambda \cdot \alpha_i^\vee = a_i > 0 \mathrm{\:for \: all \:} \alpha_i \notin \Pi_{P(\lambda)} \]
which is clearly the case by definition of $\Pi_\lambda = \Pi_{P(\lambda)}$. Thus there is an embedding
\[ G/P(\lambda) \hookrightarrow \mathbb{P}( H^0( G / P(\lambda),\mathcal{L}(\lambda))^*) \cong \mathbb{P}(V(\lambda))\]
which is the natural projective embedding of the partial flag variety $G/P(\lambda)$. More concretely, let $v_{\max}$ denote the highest weight vector in $V(\lambda)$, so that $v_{\max}$ is an eigenvector for the action of $T$ with eigenvalue $\lambda$; then the embedding is given by the inclusion of the orbit $G\cdot v_{\max}$,
\[ G/P(\lambda) \hookrightarrow \mathbb{P}(V(\lambda))  \]
\[ gP(\lambda) \mapsto [g \cdot v_{\max}].\] 

\begin{rmk}
We will be primarily interested in the case when $G$ is a subgroup of $\mathrm{GL}(n)$ and the weight $\lambda$ is restricted from
$\mathrm{GL}(n)$, and here we do not need to assume that $G$ is
simply connected or semisimple. For we can view the weight $\lambda$ as an
element  of the dual of the Lie algebra of both $\mathrm{GL}(n)$ and 
$\mathrm{PGL}(n)$ or equivalently $\mathrm{SL}(n)$. There are associated parabolics $P(\lambda_{\mathrm{GL}})$ and $P(\lambda_{\mathrm{SL}})$,
and the partial flag varieties for these two parabolics agree
\[ \mathrm{GL}(n) / P(\lambda_{\mathrm{GL}})  = \mathrm{SL}(n) / P(\lambda_{\mathrm{SL}}). \]
Since $\mathrm{SL}(n)$ is semisimple and simply connected there is a projective embedding of this partial flag variety into $\mathbb{P}(V(\lambda_{\mathrm{SL}}))$ where $V(\lambda_{\mathrm{SL}})$ is the representation of $\mathrm{SL}(n)$ with highest weight $\lambda_{\mathrm{SL}}$. 
We have $G \subset \mathrm{GL}(n)$ and $P(\lambda) = G \cap P(\lambda_{\mathrm{GL}})$ and so
\[ G/P(\lambda) \subseteq \mathrm{GL}(n) / P(\lambda_{\mathrm{GL}})  = \mathrm{SL}(n) / P(\lambda_{\mathrm{SL}}). \]
Hence we can use this inclusion and the embedding of $\mathrm{SL}(n) / P(\lambda_{\mathrm{SL}})$ described above to obtain a projective embedding of $G/P(\lambda)$.
\end{rmk}

\subsection{The canonical linearisation on $\hat{S_\beta}$}\label{nat linear on hat var}

We have seen that given a one-parameter subgroup $\lambda$ of $G$ as above
there is a natural ample linearisation of the $G$-action on the partial flag variety $G/P(\lambda)$. We can apply this to the case when the parabolic subgroup is $P_\beta = P(\lambda_{\beta})$. The natural embedding of the partial flag variety $G/P_\beta$ is thus given by the very ample line bundle $\mathcal{L}(\lambda_{\beta})$
\[ G/P_\beta \hookrightarrow \mathbb{P}( H^0( G / P_\beta,\mathcal{L}(\lambda_{\beta}))^*) \cong \mathbb{P}(V(\beta)).\]

Let $\mathcal{L}_\beta$ denote the $G$-linearisation on $G/P_\beta \times X$ given by the tensor product of the pullbacks of $\mathcal{L}(\lambda_{-\beta})$ on $G/P_\beta$ and $\mathcal{L}$ on $X$ to $G/P_\beta \times X$. We also let $\mathcal{L}_\beta$ denote the restriction of this linearisation to $\hat{S}_\beta$ and call this the canonical linearisation. There is also a canonical linearisation $\mathcal{L}_\beta$ of the $\mathrm{Stab} \beta$-action on $Z_\beta$ given by twisting the original linearisation $\mathcal{L}$ by the character of $\mathrm{Stab} \beta$ corresponding to $-\beta$. Recall that $Z_\beta^{ss}$ is defined to be the semistable subset for this linearisation. The character of $\mathrm{Stab} \beta$ corresponding to $-\beta$ extends to a character of $P_\beta$ and so there is also a canonical linearisation $\mathcal{L}_\beta$ of the $P_\beta$-action (or the $\mathrm{Stab} \beta$-action) on $Y_\beta$ given by twisting $\mathcal{L}$ by the character corresponding to $-\beta$. All of these linearisations are equal to the restriction of the canonical $G$-linearisation $\mathcal{L}_\beta$ on $\hat{S}_\beta$ to the relevant subvarieties and subgroups. The following lemma explains why we call $\mathcal{L}_\beta$ the canonical linearisation.

\begin{lemma}\label{lots of isos}
We have isomorphisms of graded algebras
\[ \bigoplus_{r \geq 0} H^0(\hat{S}_\beta, \mathcal{L}_\beta^{\otimes r})^G \cong \bigoplus_{r \geq 0} H^0(\overline{Y}_\beta, \mathcal{L}_\beta^{\otimes r})^{P_\beta} \cong \bigoplus_{r \geq 0} H^0({Z}_\beta, \mathcal{L}_\beta^{\otimes r})^{\mathrm{Stab} \beta}.\]
\end{lemma}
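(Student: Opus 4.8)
The plan is to exhibit both isomorphisms as restriction maps and to establish them separately, as they are of quite different character: the first is a formal consequence of the associated-bundle structure of $\hat{S}_\beta$, whereas the second rests on the Bialynicki--Birula geometry of the one-parameter subgroup $\lambda_\beta$ together with the specific twist by $-\beta$ built into the canonical linearisation.

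For the first isomorphism I would use that $\hat{S}_\beta = G \times_{P_\beta} \overline{Y}_\beta$ is a fibre bundle over $G/P_\beta$ whose fibre over the base point $eP_\beta$ is $\overline{Y}_\beta$, with stabiliser $P_\beta$. As already noted in the construction, the $G$-linearisation $\mathcal{L}_\beta$ restricts on this fibre to the $P_\beta$-linearisation $\mathcal{L}_\beta$ on $\overline{Y}_\beta$. Restricting a $G$-invariant section of $\mathcal{L}_\beta^{\otimes r}$ to the fibre therefore yields a $P_\beta$-invariant section, and this defines a homomorphism of graded algebras to the middle term. It is injective because $G$ acts transitively on $G/P_\beta$, so a $G$-invariant section is determined by its restriction to a single fibre; and it is surjective because a $P_\beta$-invariant section $\sigma$ on $\overline{Y}_\beta$ extends to the assignment $[g,y] \mapsto g \cdot \sigma(y)$, which is well defined precisely because $\sigma$ is $P_\beta$-invariant and is manifestly $G$-invariant. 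The two constructions are mutually inverse and respect products, giving the first isomorphism.

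For the second isomorphism I would restrict along $Z_\beta \hookrightarrow \overline{Y}_\beta$. Since $Z_\beta$ is $\mathrm{Stab}\beta$-invariant and $\mathcal{L}_\beta$ restricts there to the canonical $\mathrm{Stab}\beta$-linearisation, restriction carries $P_\beta$-invariant sections to $\mathrm{Stab}\beta$-invariant ones; the content is that it is bijective. The central tool is the retraction $p_\beta : \overline{Y}_\beta \to Z_\beta$, $y \mapsto \lim_{t \to 0} \lambda_\beta(t) \cdot y$, whose existence follows from the defining inequalities: on $\overline{Y}_\beta$ all weights satisfy $\alpha_i \cdot \beta \geq \|\beta\|^2$, so the limit retains exactly the coordinates with $\alpha_i \cdot \beta = \|\beta\|^2$ and lands in $Z_\beta$. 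This $p_\beta$ is $\mathrm{Stab}\beta$-equivariant (as $\lambda_\beta$ is central in $\mathrm{Stab}\beta$) and constant on the orbits of the unipotent radical $U_\beta$ of $P_\beta$ (as $\lambda_\beta(t) u \lambda_\beta(t)^{-1} \to e$ for $u \in U_\beta$). Crucially, the twist by the character $-\beta$ is arranged so that $\lambda_\beta$ acts with weight $0$ on $\mathcal{L}_\beta|_{Z_\beta}$ and with strictly positive weights $\alpha_i \cdot \beta - \|\beta\|^2$ in the normal directions to $Z_\beta$. I would use this to identify $\mathcal{L}_\beta|_{\overline{Y}_\beta}$, as a $\lambda_\beta$-equivariant (hence $P_\beta$-equivariant) line bundle, with the pullback $p_\beta^*(\mathcal{L}_\beta|_{Z_\beta})$ carrying trivial $\lambda_\beta$-weight. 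Granting this identification, surjectivity is immediate by pulling back a $\mathrm{Stab}\beta$-invariant section along $p_\beta$ (the result is $\mathrm{Stab}\beta$-invariant by equivariance and $U_\beta$-invariant since $p_\beta \circ u = p_\beta$, hence $P_\beta$-invariant, and it restricts to the given section), while injectivity is immediate since a section pulled back from $Z_\beta$ that vanishes on $Z_\beta$ is zero. In local terms, if $x$ is a normal coordinate on which $\lambda_\beta(t)$ acts by $t^a$ with $a > 0$, then $\lambda_\beta$-invariance of a section $f(x)$ of the weight-$0$ bundle forces $f(t^{-a}x) = f(x)$ for all $t$, so $f$ is constant along the fibres of $p_\beta$.

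I expect the main obstacle to be exactly this equivariant identification $\mathcal{L}_\beta|_{\overline{Y}_\beta} \cong p_\beta^*(\mathcal{L}_\beta|_{Z_\beta})$ together with the accompanying weight bookkeeping, since it is here that the choice of twist by $-\beta$ is forced and the sign conventions for the $\lambda_\beta$-weights on $\mathcal{O}(1)$ must be pinned down. Two further points need care. First, one works with the closure $\overline{Y}_\beta = \overline{Y_\beta^{ss}}$ rather than with $Y_\beta^{ss}$ itself, so I would check that the limit formula for $p_\beta$ and the normal-weight dichotomy persist over the closure, using the containment $\overline{Y}_\beta \subseteq X \cap \{x_j = 0 : \alpha_j \cdot \beta < \|\beta\|^2\}$, and verify that $p_\beta$ accounts for all of $Z_\beta$ so that the third algebra is reached in full. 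Second, the cleanest form of the Bialynicki--Birula argument is for smooth varieties, which is the standing hypothesis of this section; the passage to schemes is postponed to $\S$\ref{ext proj sch}. With the line-bundle identification in place, the two isomorphisms compose to give the stated chain, completing the proof.
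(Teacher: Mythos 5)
Your first isomorphism is exactly the paper's (the paper disposes of it in one line by noting $\mathcal{L}_\beta = G \times_{P_\beta} \mathcal{L}_\beta$ as an induced bundle; your restriction/extension argument is the standard expansion of that line). For the second isomorphism you assemble the same ingredients the paper uses --- centrality of $\lambda_\beta$ in $\mathrm{Stab}\beta$, the weight-zero normalisation forced by the twist by $-\beta$, the retraction $p_\beta$, and the intertwining relation $p_\beta(p \cdot y) = q_\beta(p)\cdot p_\beta(y)$ of (\ref{eqqp}) --- but your packaging has a genuine gap: the pivotal identification $\mathcal{L}_\beta|_{\overline{Y}_\beta} \cong p_\beta^*(\mathcal{L}_\beta|_{Z_\beta})$ cannot hold over the closure, because $p_\beta$ does not extend to $\overline{Y}_\beta$. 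Concretely, take $X = \mathbb{P}^n$ and let $V_{\geq}$ (resp.\ $V_{=}$, $V_{>}$) be spanned by the coordinates with $\alpha_i \cdot \beta \geq ||\beta||^2$ (resp.\ $=$, $>$). Then $\overline{Y}_\beta = \mathbb{P}(V_{\geq})$ (cf.\ Remark \ref{disconn}), which contains $\mathbb{P}(V_{>})$, the locus where every $Z_\beta$-coordinate vanishes; at such points the $\lambda_\beta$-limit lies in fixed components of strictly positive weight, not in $Z_\beta$. So the check you yourself flag --- that ``the limit formula for $p_\beta$ and the normal-weight dichotomy persist over the closure'' --- fails whenever $\overline{Y}_\beta \neq Y_\beta$, which is the typical situation. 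Consequently $p_\beta^*\sigma$ is only a section over $Y_\beta \cap \overline{Y}_\beta$, and the surjectivity of restriction to $Z_\beta$ needs a separate argument that this section extends across $\overline{Y}_\beta \setminus Y_\beta$, which your proposal does not supply.

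The paper avoids the problem by never extending $p_\beta$: it first passes through $\lambda_\beta(\mathbb{C}^*)$-invariants, using that $\lambda_\beta$ acts on $\mathcal{L}_\beta$ over all of $\overline{Y}_\beta$ with non-negative weights and weight zero exactly along $Z_\beta$, so that restriction induces an isomorphism of the $\lambda_\beta(\mathbb{C}^*)$-invariant section rings of $\overline{Y}_\beta$ and $Z_\beta$. This single weight-theoretic statement delivers both injectivity and the boundary extension in one stroke (in the ambient case it just says a weight-zero polynomial involves only the $V_{=}$-coordinates, hence is automatically global on $\mathbb{P}(V_{\geq})$); the pullback $p_\beta^*\sigma$ is then used only on $Y_\beta$, where (\ref{eqqp}) upgrades $\mathrm{Stab}\beta$-invariance to $P_\beta$-invariance, and density of $Y_\beta^{ss}$ in $\overline{Y}_\beta$ finishes. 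You should repair your argument along these lines. A secondary slip: ``$\lambda_\beta$-equivariant (hence $P_\beta$-equivariant)'' is not a valid implication --- equivariance under a subgroup never implies equivariance under the larger group; the correct statement, which your $U_\beta$-invariance step silently uses, is that $p_\beta$ intertwines the $P_\beta$-action on $Y_\beta$ with the $\mathrm{Stab}\beta$-action on $Z_\beta$ through the retraction $q_\beta : P_\beta \to \mathrm{Stab}\beta$, so that $p_\beta^*(\mathcal{L}_\beta|_{Z_\beta})$ carries a $q_\beta$-twisted $P_\beta$-equivariant structure --- precisely what the paper extracts from (\ref{eqqp}).
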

\begin{proof}
The first isomorphism follows from the fact that $\hat{S}_\beta = G \times_{P_\beta} \overline{Y}_\beta$ and the canonical $G$-linearisation on $\hat{S}_\beta$ is equal to $G \times_{P_\beta} \mathcal{L}_\beta$ where here $\mathcal{L}_\beta$ is the canonical $P_\beta$-linearisation on $\overline{Y}_\beta$.

Let $\lambda_\beta : \mathbb{C}^* \rightarrow G$ be the 1-PS determined by the rational weight $\beta$. Then $\lambda_\beta(\mathbb{C}^*) \subseteq P_\beta$ and so
\[ \bigoplus_{r \geq 0} H^0(\overline{Y}_\beta, \mathcal{L}_\beta^{\otimes r})^{P_\beta} \subseteq \bigoplus_{r \geq 0} H^0(\overline{Y}_\beta, \mathcal{L}_\beta^{\otimes r})^{\lambda_\beta(\mathbb{C}^*)}. \]
The torus $\lambda_\beta(\mathbb{C}^*)$ acts on $\overline{Y}_\beta$ with respect to the canonical linearisation $\mathcal{L}_\beta$ with non-negative weights, and has zero weights exactly on $Z_\beta$. Hence
\[ \bigoplus_{r \geq 0} H^0(\overline{Y}_\beta, \mathcal{L}_\beta^{\otimes r})^{\lambda_\beta(\mathbb{C}^*)} \cong \bigoplus_{r \geq 0} H^0(Z_\beta, \mathcal{L}_\beta^{\otimes r})^{\lambda_\beta(\mathbb{C}^*)} \]
and so
\[ \bigoplus_{r \geq 0} H^0(\overline{Y}_\beta, \mathcal{L}_\beta^{\otimes r})^{P_\beta} \subseteq \bigoplus_{r \geq 0} H^0(\overline{Y}_\beta, \mathcal{L}_\beta^{\otimes r})^{\mathrm{Stab} \beta} \cong \bigoplus_{r \geq 0} H^0({Z}_\beta, \mathcal{L}_\beta^{\otimes r})^{\mathrm{Stab} \beta}.\]
Let $\sigma \in H^0({Z}_\beta, \mathcal{L}_\beta^{\otimes r})^{\mathrm{Stab} \beta}$ and consider $p_\beta^* \sigma \in H^0({Y}_\beta, \mathcal{L}_\beta^{\otimes r})^{\mathrm{Stab} \beta}$ where $p_\beta : Y_\beta \rightarrow Z_\beta$ is the retraction defined by $\beta$. We have that $P_\beta = \mathrm{Stab} \beta U_\beta$ where $U_\beta$ is the unipotent radical of $P_\beta$ and there is a retraction $q_\beta : P_\beta \rightarrow \mathrm{Stab} \beta $ such that
\begin{equation} \label{eqqp} p_{\beta} (p \cdot y) = q_{\beta} (p) \cdot p_{\beta} (y) \end{equation}
for all $y \in Y_\beta$ and $p \in P_\beta$.
The action of $P_\beta$ on $H^0({Y}_\beta, \mathcal{L}_\beta^{\otimes r})$ 
is induced from its action on $Y_\beta$ and $\mathcal{L}_\beta$,
and so if $p \in P_\beta$ we have
\[ p \cdot p_\beta^* \sigma  = p_\beta^* (q_\beta(p) \cdot \sigma) = p_\beta^* \sigma \] 
as $\sigma$ is $\mathrm{Stab} \beta$ invariant. Therefore,
\[ \bigoplus_{r \geq 0} H^0(\overline{Y}_\beta, \mathcal{L}_\beta^{\otimes r})^{P_\beta} \cong \bigoplus_{r \geq 0} H^0({Z}_\beta, \mathcal{L}_\beta^{\otimes r})^{\mathrm{Stab} \beta} .\]
\end{proof}

\begin{rmk}
Unfortunately if $\beta \neq 0$ then $\mathcal{L}(\lambda_{-\beta})$ is a non-ample linearisation of the $G$-action on $G/P_\beta$, and the canonical $G$-linearisation $\mathcal{L}_\beta$ on $\hat{S}_\beta$ is in general non-ample too, as the following example shows.
\end{rmk}

\begin{ex} \label{exx3.5}
Consider $G = \mathrm{SL}(2, \mathbb{C})$ acting on the complex projective line $X = \mathbb{P}^1$ with respect to $\mathcal{L} = \mathcal{O}_{\mathbb{P}^1}(1)$. The semistable set is empty and the action is transitive so there will be one nonzero index in the stratification of $X$. We choose a maximal torus $T = \{ \mathrm{diag}(t, t^{-1}) : t \in S^1 \}$; then the weights of $T$ acting on $\mathbb{C}^2$ are $\alpha_0 = \alpha, \alpha_1 = \alpha^{-1}$ where
\[ \alpha : T \rightarrow S^1 \]
\[ \left( \begin{array}{cc} t & 0 \\0 & t^{-1} \end{array} \right) \mapsto t. \]
The Lie algebra of $T$ is $\mathfrak{t} \cong \mathbb{R}$ and we pick the positive Weyl chamber  $\mathfrak{t}_+$ which contains $\alpha$. Then $\beta = \alpha$ is an index for the stratification of $X$ and we have that
\[ Z_\beta = Z_\beta^{ss} = Y_\beta = Y_\beta^{ss} = \{ [1:0] \} \]
and $S_\beta = X$. The parabolic subgroup $P_\beta$ is the Borel subgroup of upper triangular matrices and we have an isomorphism \[G/P_\beta \cong \mathbb{P}^1\]
\[ gP_\beta \mapsto  g \cdot \left(\begin{array}{c} 1 \\ 0 \end{array} \right). \]
The very ample line bundle on $G/P_\beta$ defined by $\beta$ is $\mathcal{O}_{\mathbb{P}^1}(1)$ and the line bundle defined by $-\beta$ is $\mathcal{O}_{\mathbb{P}^1}(-1)$. The canonical linearisation is given by restricting $\mathcal{O}_{\mathbb{P}^1}(-1) \otimes \mathcal{O}_{\mathbb{P}^1}(1) $ on $\mathbb{P}^1 \times \mathbb{P}^1 \cong G/P_\beta \times X$ to $ \hat{S}_\beta \cong S_\beta = X$. The morphism $S_\beta \rightarrow \mathbb{P}^1 \times \mathbb{P}^1$ is the diagonal morphism and so the canonical linearisation on $S_\beta=X$ is $\mathcal{L}_\beta =\mathcal{O}_{\mathbb{P}^1}$. 
\end{ex}

\begin{prop} \label{mmore}
The projective variety $Z_\beta /\!/_{\mathcal{L}_\beta} \mathrm{Stab} \beta$ is a categorical quotient for the action of
\begin{enumerate}
\renewcommand{\labelenumi}{\roman{enumi})}
\item $\mathrm{Stab} \beta $ on $Z_\beta^{ss}$,
\item $\mathrm{Stab} \beta $ on $Y_\beta^{ss}$,
\item $P_\beta $ on $Y_\beta^{ss}$,
\item $G$ on $S_\beta$.
\end{enumerate}
\end{prop}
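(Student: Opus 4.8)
The plan is to prove the four assertions in the order (i) $\Rightarrow$ (ii) $\Rightarrow$ (iii) $\Rightarrow$ (iv), transporting the quotient from $Z_\beta^{ss}$ outwards along the retraction $p_\beta$ and then along the associated bundle $S_\beta \cong G \times_{P_\beta} Y_\beta^{ss}$. Throughout I write $\pi : Z_\beta^{ss} \to Z_\beta /\!/_{\mathcal{L}_\beta} \mathrm{Stab}\beta$ for the GIT quotient morphism. Assertion (i) is immediate from Mumford's geometric invariant theory: $\mathrm{Stab}\beta$ is the centraliser of the torus $\lambda_\beta(\mathbb{C}^*)$ and hence a reductive Levi subgroup, $Z_\beta$ is projective, and $\mathcal{L}_\beta$ on $Z_\beta$ is $\mathcal{L}|_{Z_\beta}$ (ample) with its $\mathrm{Stab}\beta$-action twisted by the character $-\beta$; since $Z_\beta^{ss}$ is by definition the semistable locus for $\mathcal{L}_\beta$, the morphism $\pi$ is a categorical quotient onto the projective variety $Z_\beta /\!/_{\mathcal{L}_\beta}\mathrm{Stab}\beta$.

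The geometric content, and the main obstacle, lies in (ii): I must show that every $\mathrm{Stab}\beta$-invariant morphism $f : Y_\beta^{ss} \to T$ is constant on the fibres of $p_\beta$, so that $\phi := \pi \circ p_\beta$ has the required universal property. Here I would exploit that $\lambda_\beta(\mathbb{C}^*) \subseteq \mathrm{Stab}\beta$ (a torus lies in its own centraliser) together with the fact recorded in the proof of Lemma \ref{lots of isos} that $\lambda_\beta(\mathbb{C}^*)$ acts on $\overline{Y}_\beta$ with non-negative weights vanishing exactly on $Z_\beta$; concretely $p_\beta(y) = \lim_{t\to 0}\lambda_\beta(t)\cdot y$. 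The clean way to run the argument is to extend the action to $\bar{a} : \mathbb{A}^1 \times Y_\beta^{ss} \to Y_\beta^{ss}$ with $\bar{a}(t,-) = \lambda_\beta(t)\cdot(-)$ for $t\neq 0$ and $\bar{a}(0,-) = p_\beta$, noting this does land in $Y_\beta^{ss}$ because $Y_\beta^{ss}=p_\beta^{-1}(Z_\beta^{ss})$ and $Z_\beta^{ss}\subseteq Y_\beta^{ss}$. Since $f$ is $\lambda_\beta(\mathbb{C}^*)$-invariant, $f\circ\bar{a}$ and $f\circ\mathrm{pr}_2$ agree on the dense open $\mathbb{C}^*\times Y_\beta^{ss}$; as $Y_\beta^{ss}$ is reduced and $T$ is separated they agree everywhere, and restricting to $t=0$ gives $f = (f|_{Z_\beta^{ss}})\circ p_\beta$. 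As $f|_{Z_\beta^{ss}}$ is $\mathrm{Stab}\beta$-invariant, (i) factors it uniquely through $\pi$, giving the factorisation of $f$ through $\phi$; uniqueness follows by restricting any two factorisations to $Z_\beta^{ss}$, where $\phi = \pi$.

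Assertion (iii) then drops out of (ii), the Levi decomposition $P_\beta = \mathrm{Stab}\beta\, U_\beta$, and the identity \eqref{eqqp}. First $\phi$ is already $P_\beta$-invariant, since for $p\in P_\beta$ we have $\phi(p\cdot y) = \pi(p_\beta(p\cdot y)) = \pi(q_\beta(p)\cdot p_\beta(y)) = \pi(p_\beta(y)) = \phi(y)$, using \eqref{eqqp} and the $\mathrm{Stab}\beta$-invariance of $\pi$. Conversely any $P_\beta$-invariant morphism is a fortiori $\mathrm{Stab}\beta$-invariant, hence factors uniquely through $\phi$ by (ii); so $\phi$ is a categorical quotient for $P_\beta$ as well.

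Finally, for (iv) I would use $S_\beta\cong G\times_{P_\beta}Y_\beta^{ss}$ and the inclusion $\iota : Y_\beta^{ss}\hookrightarrow S_\beta$, $y\mapsto [e,y]$. The $P_\beta$-invariant $\phi$ descends to a $G$-invariant morphism $\Phi : S_\beta \to Z_\beta /\!/_{\mathcal{L}_\beta}\mathrm{Stab}\beta$, $[g,y]\mapsto\phi(y)$. Given a $G$-invariant $f : S_\beta\to T$, the composite $f\circ\iota$ is $P_\beta$-invariant because $[e,p\cdot y] = p\cdot[e,y]$, so $f\circ\iota = \bar{f}\circ\phi$ for a unique $\bar f$ by (iii); then $f([g,y]) = f(g\cdot[e,y]) = f([e,y]) = \bar{f}(\phi(y)) = \bar{f}(\Phi([g,y]))$ shows $f = \bar{f}\circ\Phi$, with uniqueness again reduced to (iii) by restricting along $\iota$. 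This closes the chain, so I expect the entire weight of the proof to sit in the factorisation through $p_\beta$ in step (ii), with (iii) and (iv) being formal consequences of the associated bundle structure.
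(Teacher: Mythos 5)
Your proof is correct and follows essentially the same route as the paper: (i) by classical GIT for the reductive group $\mathrm{Stab}\beta$ acting on $Z_\beta$ with the ample linearisation $\mathcal{L}_\beta$, (ii) and (iii) by factoring any invariant morphism as $f = (f|_{Z_\beta^{ss}})\circ p_\beta$ and then through the quotient of $Z_\beta^{ss}$ using (\ref{eqqp}), and (iv) formally from $S_\beta \cong G \times_{P_\beta} Y_\beta^{ss}$. Your $\mathbb{A}^1$-extension of the $\lambda_\beta(\mathbb{C}^*)$-action is just a more careful spelling-out of the paper's one-line justification that a $\mathrm{Stab}\beta$-invariant morphism is constant on orbit closures (since $p_\beta(y) \in \overline{\mathrm{Stab}\beta\cdot y}$), so the mathematical content is the same.
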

\begin{proof} The natural morphism  $Z_\beta^{ss} \rightarrow Z_\beta /\!/_{\mathcal{L}_\beta} \mathrm{Stab} \beta$ is a  
categorical quotient by classical GIT since $\mathcal{L}_\beta$ is ample on 
$Z_\beta$ and $\mathrm{Stab} \beta$ is reductive, so (i) is proved. 

There is a surjective morphism $Y_\beta^{ss} \rightarrow Z_\beta /\!/_{\mathcal{L}_\beta} \mathrm{Stab} \beta$ given by the composition of the retraction $p_\beta : Y_\beta^{ss} \rightarrow Z_\beta^{ss}$ with the categorical quotient $Z_\beta^{ss} \rightarrow Z_\beta /\!/_{\mathcal{L}_\beta} \mathrm{Stab} \beta$. Moreover this surjective morphism $Y_\beta^{ss} \rightarrow Z_\beta /\!/_{\mathcal{L}_\beta} \mathrm{Stab} \beta$ is $P_\beta$-invariant 
by (\ref{eqqp}) and the $\mathrm{Stab} \beta$-invariance of $Z_\beta^{ss} \rightarrow Z_\beta /\!/_{\mathcal{L}_\beta} \mathrm{Stab} \beta$. Thus to prove (ii) and (iii) it suffices to show that any
$\mathrm{Stab} \beta$-invariant morphism $f : Y_\beta^{ss} \rightarrow Y$ factors through $Z_\beta /\!/ _{\mathcal{L}_\beta} \mathrm{Stab} \beta$. As $f$ is $\mathrm{Stab} \beta$-invariant it is constant on orbit closures and so $f = f|_{Z_\beta^{ss}} \circ p_\beta$. Since 
 $f|_{Z_\beta^{ss}} : Z_\beta^{ss} \rightarrow Y$ is $\mathrm{Stab} \beta$-invariant, there is a morphism $h : Z_\beta /\!/ _{\mathcal{L}_\beta} \mathrm{Stab} \beta \rightarrow Y$ such that $f|_{Z_\beta^{ss}}$ is the composition of $h$ with the categorical quotient $Z_\beta^{ss} \rightarrow  Z_\beta /\!/_{\mathcal{L}_\beta} \mathrm{Stab} \beta$ of the $\mathrm{Stab} \beta$-action on $Z_\beta^{ss}$. Then we have a commutative diagram
\begin{center}{$
\begin{diagram} \node{Y_\beta^{ss}} \arrow{e,t}{p_\beta} \arrow{se,b}{f} \node{Z_\beta^{ss}} \arrow{e,t}{} \arrow{s,b}{f|}  \node{Z_\beta /\!/ _{\mathcal{L}_\beta} \mathrm{Stab} \beta} \arrow{sw,b}{h} \\ \node{} \node{Y} \end{diagram}
$}\end{center}
where $f| = f|_{Z_\beta^{ss}}$
and the morphism $f$ factors through $Z_\beta /\!/ _{\mathcal{L}_\beta} \mathrm{Stab} \beta$ as required. 

Thus (ii) and (iii) are proved, and 
(iv) now follows immediately from the fact that $S_\beta \cong G \times_{P_\beta} Y_\beta^{ss}$.
\end{proof}

\begin{rmk} \label{rmkk3.7}
From Lemma \ref{lots of isos} and Proposition \ref{mmore} we see that 
$Z_\beta /\!/_{\mathcal{L}_\beta} \mathrm{Stab} \beta$ has properties we would
like and expect for a GIT quotient of the actions of $\mathrm{Stab} \beta$ and $P_\beta$ on $\overline{Y}_\beta$ and of $G$ on $\hat{S}_\beta$ with respect to the linearisation $\mathcal{L}_\beta$. The linearisation $\mathcal{L}_\beta$ is
ample on $\overline{Y}_\beta$ and the proofs above do indeed show that $Y_\beta^{ss}$ is the semistable set for this linear action of $\mathrm{Stab} \beta$ and that the GIT quotient is $Z_\beta /\!/_{\mathcal{L}_\beta} \mathrm{Stab} \beta$. However the parabolic subgroup $P_\beta$ of $G$ is not
usually reductive and the linearisation $\mathcal{L}_\beta$ is not in general
ample on $\hat{S}_\beta$, so we cannot apply classical GIT to the actions of 
$P_\beta$ on $\overline{Y_\beta}$ and $G$ on $\hat{S}_\beta$ with respect to the linearisation $\mathcal{L}$. For a linear action of a reductive group $G$ on a variety $X$ with respect to a non-ample line bundle, Mumford does define in
\cite{mumford} a notion of semistability and shows that the resulting semistable set
$X^{ss}$ has a categorical quotient; however according to his definition 
for the linearisation $\mathcal{L}_\beta$ on $\hat{S}_\beta$ we would not in general get $\hat{S}_\beta^{ss} = S_\beta$ with the categorical quotient being
$Z_\beta /\!/_{\mathcal{L}_\beta} \mathrm{Stab} \beta$. Indeed in Example \ref{exx3.5} Mumford's semistable set and categorical quotient are empty.
\end{rmk}

\begin{rmk}\label{rmk on mod lin} 
The categorical quotient $S_\beta \rightarrow Z_\beta /\!/_{\mathcal{L}_\beta} \mathrm{Stab} \beta$ collapses more orbits than we might like, resulting in the GIT quotient having lower dimension than expected. This happens because if $y \in Y_\beta^{ss}$ then $p_\beta(y) \in \overline{\mathrm{Stab} \beta \cdot y} \subseteq \overline{G \cdot y}$, and so in the quotient every point in $Y_\beta^{ss}$ is identified with its projection to $Z_\beta^{ss}$.
\end{rmk}

\subsection{Perturbations of the canonical linearisation}\label{how to perturb}

To resolve the issue mentioned in Remark \ref{rmk on mod lin} above we would like to perturb the canonical linearisation $\mathcal{L}_\beta$ for the action 
of $G$ on $\hat{S}_\beta$ or the action of $P_\beta$ on $\overline{Y_\beta}$
and take a GIT quotient with respect to this perturbed linearisation. Unfortunately, as we observed in Remark
\ref{rmkk3.7}, on $\hat{S}_\beta$ the canonical $G$-linearisation is not ample, whereas on $\overline{Y}_\beta$ it is ample, but $P_\beta$ is not reductive, and so in each case applying GIT is delicate. On the other hand $\mathrm{Stab} \beta$ is reductive and $\mathcal{L}_\beta$ is an ample $\mathrm{Stab} \beta$-linearisation on $\overline{Y}_\beta$, so we can try perturbing this linearisation.

\begin{rmk} 
Note that although $\overline{Y}_\beta /\!/_{\mathcal{L}_\beta} \mathrm{Stab} \beta \cong Z_\beta /\!/_{\mathcal{L}_\beta} \mathrm{Stab} \beta$ is a categorical quotient for the $G$-action on $S_\beta$
by Proposition \ref{mmore}, after a perturbation we would no longer expect the GIT quotient $\overline{Y}_\beta /\!/ \mathrm{Stab} \beta $ to give us a categorical quotient of the $G$-action on an open subset of $S_\beta$. Instead,
 if $U$ is a $\mathrm{Stab} \beta$-invariant open subscheme of $Y_\beta^{ss}$, then a categorical quotient for the $\mathrm{Stab} \beta$-action on $U$ will be a categorical quotient for the $G$-action on $G \times_{\mathrm{Stab} \beta}  U$. 
Moreover, since $S_\beta \cong G \times_{P_\beta} Y_\beta^{ss}$, we have a surjective morphism
\[ G \times_{\mathrm{Stab} \beta} Y_\beta^{ss} \rightarrow S_\beta \]
\[ [g, y] \mapsto g \cdot y\]
with fibres isomorphic to $P_\beta / \mathrm{Stab} \beta \cong U_\beta$, the unipotent radical of $P_\beta$, which as an algebraic variety is isomorphic to an affine space. 
\end{rmk}

Recall that the canonical $\mathrm{Stab} \beta$-linearisation $\mathcal{L}_\beta$ on $\overline{Y}_\beta$ is ample and is equal to $\mathcal{L}$ twisted by the character of $\mathrm{Stab} \beta$ associated to $-\beta$. Therefore, to perturb this linearisation we can perturb the original linearisation $\mathcal{L}$ and/or make a perturbation of the character by using $-(\beta + \epsilon \beta')$ rather than $-\beta$ where $\beta' \in \mathfrak{t}_+$ is a rational weight and $\epsilon$ is a small rational number. 

The norm square of the moment map associated to the canonical $\mathrm{Stab} \beta$-linearisation $\mathcal{L}_\beta$ on $\overline{Y}_\beta$ gives us a stratification
\[ \overline{Y}_\beta = \bigsqcup_{\delta \in \hat{\mathcal{B}}_\beta} S_\delta^{\mathrm{can}} \]
of $\overline{Y}_\beta$ such that $S_0^{\mathrm{can}} = Y_\beta^{ss}$. A perturbation of this linearisation also has an associated moment map which gives us a new stratification
\[ \overline{Y}_\beta = \bigsqcup_{\gamma \in \hat{\mathcal{B}}_\beta^{\mathrm{per}}} S_\gamma^{\mathrm{per}} \]
such that $S_0^{\mathrm{per}} \subseteq S_0^{\mathrm{can}} = Y_\beta^{ss}$. The next proposition shows that provided the perturbation is sufficiently small, the second stratification is a refinement of the first stratification. In particular this proposition shows that there is a subset \[{\mathcal{B}}_\beta^{\mathrm{per}} \subset \hat{\mathcal{B}}_\beta^{\mathrm{per}}\] such that
\[ {Y_\beta^{ss}} = \bigsqcup_{\gamma \in {\mathcal{B}}_\beta^{\mathrm{per}}} S_\gamma^{\mathrm{per}}. \]

\begin{prop}\label{small per refines strat}
Let $X$ be a projective variety with a $G$-action and ample linearisation $\mathcal{L}$ and let $\mathcal{L}^{\mathrm{per}}$ be an ample perturbation of this linearisation. If $\mu$ (respectively $\mu_{\mathrm{per}}$) denotes the moment map associated to $\mathcal{L}$ (respectively $\mathcal{L}^{\mathrm{per}}$), then provided $\mathcal{L}^{\mathrm{per}}$ is a sufficiently small perturbation of $\mathcal{L}$ the stratification
\[ X = \bigsqcup_{\gamma \in {\mathcal{B}}^{\mathrm{per}}} S_\gamma^{\mathrm{per}} \]
associated to $|| \mu_{\mathrm{per}} ||^2$ is a refinement of the stratification
\[ X = \bigsqcup_{\beta \in \mathcal{B}} S_\beta \]
associated to $|| \mu ||^2$.
\end{prop}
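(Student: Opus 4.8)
The plan is to convert the analytic statement about gradient-flow stratifications into a finite combinatorial statement about the torus weights, and then to win by finiteness together with the continuity of a closest-point function.

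First I would reduce to the case $X=\mathbb{P}^n$. By Remark \ref{rmk2.2} the stratification of $X$ is cut out of the stratification of the ambient projective space by intersection, and the same holds for $\mathcal{L}^{\mathrm{per}}$; moreover for the perturbations in question we may take this ambient space to be a fixed $\mathbb{P}^n$ on which the perturbation acts by moving the torus weights while leaving the weight-space decomposition unchanged. It therefore suffices to prove that the perturbed stratification of $\mathbb{P}^n$ refines the original one and then to intersect with $X$. Next I would record the combinatorial description of the index $\beta(x)\in\mathcal{B}$ of the stratum containing a point $x$, using the Kempf--Ness and Hesselink picture of Remark \ref{KeN}. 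For the maximal torus $T$ a point $x=[x_0:\cdots:x_n]$ with support $\mathrm{supp}(x)=\{j:x_j\neq 0\}$ lies in $S_\beta^{T}$ exactly when $\beta$ is the closest point to $0$ of the convex hull $\mathrm{conv}\{\alpha_j:j\in\mathrm{supp}(x)\}$, while for the full group $G$ the index $\beta(x)\in\mathfrak{t}_+$ is the dominant representative of $\max_{g\in G}\beta_T(g\cdot x)$, the most destabilising direction over the orbit, computed by Remark \ref{KeN} from the Hilbert--Mumford function $\mu(x,\lambda)$. The point is that $\mu(x,\lambda)$, and hence $\beta(x)$, depends on the linearisation only through the finite weight set $\{\alpha_0,\dots,\alpha_n\}$; perturbing $\mathcal{L}$ to $\mathcal{L}^{\mathrm{per}}$ perturbs these to nearby weights $\{\alpha_j^{\mathrm{per}}\}$ while preserving the notion of support, or more generally of orbit type.

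The combinatorial heart is then as follows. There are only finitely many supports, and more generally finitely many orbit types $\tau$, and for each of them $\beta(\tau)$ is the solution of a closest-point-to-a-convex-hull problem, hence a continuous (indeed Lipschitz) function of the weights. Consequently the finite set of original indices $\mathcal{B}=\{\beta(\tau)\}$ consists of points at some fixed minimal positive mutual distance; if the perturbation is small enough that every perturbed index $\beta^{\mathrm{per}}(\tau)$ lies within half that distance of $\beta(\tau)$, then types with distinct original index keep distinct perturbed index. Taking the contrapositive, any two points sharing a perturbed index $\gamma$ must already share their original index $\beta$, so each perturbed stratum $S_\gamma^{\mathrm{per}}$ lies inside a single original stratum $S_\beta$. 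This is precisely the refinement claim, and grouping the perturbed indices $\gamma$ according to the containing $\beta$ produces the asserted subsets $\mathcal{B}_\beta^{\mathrm{per}}$.

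The step I expect to be the main obstacle is passing from the torus to the full reductive group $G$, since there $\beta(x)$ is defined by optimising over the whole orbit $G\cdot x$ rather than over a single support, so that ``type'' becomes a subtler $G$-orbit invariant and one must check that the optimum cannot jump between non-conjugate directions under an arbitrarily small perturbation. To control this I would use the inductive description $S_\beta\cong G\times_{P_\beta}Y_\beta^{ss}$ together with Lemma \ref{lots of isos}, which reduces questions about the $G$-action on $S_\beta$ to the reductive group $\mathrm{Stab}\,\beta$ acting on $Z_\beta$; there the stratum is governed only by the strict weight inequalities $\alpha_i\cdot\beta<\|\beta\|^2$, which persist under a sufficiently small perturbation, together with ordinary reductive GIT semistability, so that the torus-level refinement propagates. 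Equivalently, in the Morse picture one must show that each perturbed critical set $C_\gamma^{\mathrm{per}}$ of $\|\mu_{\mathrm{per}}\|^2$ lies in the unstable set of a single original critical set $C_\beta$ of $\|\mu\|^2$; here it is exactly the finiteness of $\mathcal{B}$ and the continuity of the critical values $\|\beta\|^2$ in the weights that make ``sufficiently small'' meaningful and rule out any merging of strata.
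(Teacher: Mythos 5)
Your torus-level argument is sound, but the inference you build on it at the level of $G$ contains a genuine gap. The step ``types with distinct original index keep distinct perturbed index; taking the contrapositive, any two points sharing a perturbed index must already share their original index'' conflates the index of a \emph{type} with the index of a \emph{point}. A point's perturbed index is computed from whichever subset of perturbed weights optimises the perturbed instability over its orbit, and this optimising subset can differ from the subset optimising the original instability --- this is exactly the jumping phenomenon you flag at the end, and it is the entire content of the proposition, since it is the mechanism by which strata split under perturbation. Continuity of the closest-point map $\sigma \mapsto \beta(\sigma)$ together with separation of $\mathcal{B}$ only controls each type's index individually; it does not prevent two points $x$, $y$ with distinct original indices $\beta(\sigma_a) \neq \beta(\sigma_b)$ from acquiring the same perturbed index when a near-tie in instability values swaps. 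To close this one needs two quantitative inputs your proposal never supplies: first, that the instability values $\{\|\beta\| : \beta \in \mathcal{B}\}$ are separated by a uniform gap (the paper's $\epsilon_1 > 0$, from finiteness of weight subsets), so that optimisers which can swap under a small perturbation must have been \emph{exactly} tied; second, that exactly tied optimisers arising in the orbit of a single point yield the same dominant index, which requires Kempf's uniqueness of the adapted conjugacy class (Remark \ref{KeN}), not mere continuity. Note also that your separation constant is the minimal \emph{distance} between points of $\mathcal{B}$, whereas the tie analysis needs the minimal gap between distinct \emph{norms}, a different quantity.

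Moreover, the tools you invoke to ``control'' the jump do not do this work. Lemma \ref{lots of isos} concerns invariant section algebras and categorical quotients and says nothing about the stability of adapted one-parameter subgroups under perturbation; and the observation that the strict inequalities $\alpha_i \cdot \beta < \|\beta\|^2$ persist concerns the sets $Y_\beta$, $Z_\beta$ for a \emph{fixed} $\beta$, whereas the perturbed strata are indexed by new points $\gamma \neq \beta$, so what actually has to be proved is a containment of the form $Y_\gamma^{\mathrm{per}} \subseteq \overline{Y_{\beta_\gamma}}$. This needs a quantitative margin: the paper introduces $\epsilon_0$ (the minimal slack in the inequalities $\alpha_i \cdot \beta < \|\beta\|^2$) and a bound $M$ on $\|\mu\|$, and shows for the limit point $x$ of the $\lambda_{\beta_\gamma}$-flow that $\mu(x)\cdot\beta_\gamma > \|\beta_\gamma\|^2 - \epsilon_0$. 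It separately proves that $S_\gamma^{\mathrm{per}}$ misses every $S_{\beta'}$ with $\|\beta'\| > \|\beta_\gamma\|$, via the moment-map estimate $\|\mu(gy)\| < \|\beta'\|$ and the gap $\epsilon_1$; in your framing both steps are hidden inside the unproved identification of a point's index. Indeed the paper sidesteps the optimiser-jump problem entirely: it defines $\beta_\gamma$ directly from the subset of unperturbed weights corresponding to $\gamma$'s optimal perturbed subset, and establishes the set-theoretic containment $S_\gamma^{\mathrm{per}} \subseteq S_{\beta_\gamma} = G\overline{Y_{\beta_\gamma}} \setminus \bigcup_{\|\beta'\| > \|\beta_\gamma\|} S_{\beta'}$ analytically, never claiming that the perturbed index of a point is ``the perturbation of'' its original index. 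Finally, a smaller point: your reduction to a fixed $\mathbb{P}^n$ ``with the weight-space decomposition unchanged'' is only legitimate for the character-twist perturbations used later in the paper; a general perturbation changes the line bundle and hence the embedding, which is why the paper instead works intrinsically with the fixed-point components $F_i$ of $X^H$ and compares the weights $\alpha_i$, $\alpha_i^{\mathrm{per}}$ on $\mathcal{L}|_{F_i}$ and $\mathcal{L}^{\mathrm{per}}|_{F_i}$.
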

\begin{proof}
Fix a maximal torus $H = T_\mathbb{C} \subseteq G$ and consider its fixed point set $X^H$ which has a finite number of connected components $F_i$ for $i \in I$. Let $\alpha_i$ (respectively $\alpha_i^{\mathrm{per}}$) denote the weight with which $T$ acts on $\mathcal{L}|_{F_i}$ (respectively on $\mathcal{L}^{\mathrm{per}}|_{F_i}$). Then by definition $\mathcal{B}$ is the set of closest points to $0$ of convex hulls of  subsets of $\{ \alpha_i : i \in I \}$ modulo the action of the Weyl group $W$. Similarly ${\mathcal{B}}^{\mathrm{per}}$ is the set of closest points to $0$ of  convex hulls of  subsets of $\{ \alpha^{\mathrm{per}}_i : i \in I \}$ modulo the $W$-action. Fix $\gamma\in \mathfrak{t}$ representing a point of 
${\mathcal{B}}_\beta^{\mathrm{per}}$, so that $\gamma$ is the  closest point to $0$ of the convex hull of 
\[\{ \alpha_i^{\mathrm{per}} : i \in I \: \mathrm{and} \: \alpha_i^{\mathrm{per}} \cdot \gamma \geq || \gamma ||^2 \}\] and we can list these weights as $\alpha_{i_0}^{\mathrm{per}}, \dots ,  \alpha_{i_k}^{\mathrm{per}}$, say. We define $\beta_\gamma \in \mathcal{B}$ to be the $W$-orbit of the closest point to zero of the convex hull of 
\[ \{ \alpha_{i_0}, \dots ,  \alpha_{i_k}\}.\]
 As the linearisation $\mathcal{L}^{\mathrm{per}}$ becomes close to $\mathcal{L}$ the weight $\alpha_i^{\mathrm{per}}$ becomes close to
$\alpha_i$ for each $i$ and so $\gamma$ approaches $\beta_\gamma$. 
We need to show that if this perturbation is sufficiently small then
\[ S_\beta = \bigsqcup_{\begin{array}{c} \gamma \in {\mathcal{B}}^{\mathrm{per}} \\ \beta = \beta_\gamma \end{array}} S_\gamma^{\mathrm{per}}. \]
Since $\{S_\beta: \beta \in \mathcal{B}\}$ and
$\{ S_\gamma^{\mathrm{per}}: \gamma \in {\mathcal{B}}^{\mathrm{per}}\}$ are both stratifications of $X$, it suffices to show that 
\[S_\gamma^{\mathrm{per}} \subseteq S_{\beta_\gamma}\]
 for all $\gamma \in {\mathcal{B}}^{\mathrm{per}}$, and for this it is
enough to show that 

(i) $S_\gamma^{\mathrm{per}} \cap S_{\beta'} = \phi$ for all $\beta' \in \mathcal{B}$ such that $|| \beta' || > || \beta_\gamma||$, and

(ii) $Y_\gamma^{\mathrm{per}} \subset  \overline{Y_{\beta_\gamma}}$, 

\noindent since then 
$S_\gamma^{\mathrm{per}}=GY_\gamma^{ss, \: \mathrm{per}} \subset G \overline{Y_{\beta_\gamma}} \setminus \cup_{||\beta'|| > ||\beta_\gamma||} S_{\beta'} = S_{\beta_\gamma}$ as required.

Firstly we consider how small the perturbation must be for (i) and (ii) to hold. Let
\[ \epsilon_0 := \min \left\{  || \beta ||^2 - \alpha_i \cdot \beta : \beta \in \mathcal{B}, i \in I \: \mathrm{such \: that} \:  || \beta ||^2 > \alpha_i \cdot \beta  \right\} \]
and
\[ \epsilon_1 := \min \left\{ \mid || \beta' || - || \beta || \mid : \beta', \beta \in \mathcal{B} \: \mathrm{and} \:  || \beta' || \neq || \beta || \right\}. \]
Then $\epsilon_0>0$ and $\epsilon_1 >0$ depend only on the initial linearisation $\mathcal{L}$ of the $G$-action on $X$. Since $X$ is compact $M = \sup\{ || \mu(x) || : x \in X \}$ exists and we can define
\[ \epsilon := \min \left\{ 1, \frac{\epsilon_0}{4M+1} , \frac{\epsilon_1}{3} \right\} > 0. \]
If the perturbation $\mathcal{L}^{\mathrm{per}}$ is sufficiently small then
\begin{itemize}
\item[(a)] for all $\gamma \in \mathcal{B}^{\mathrm{per}}$ we have $|| \gamma - \beta_{\gamma} || < \epsilon$, and
\item[(b)] for all $ x \in X$ we have $|| \mu(x) - \mu_{\mathrm{per}}(x) || < \epsilon$;
\end{itemize}
we will assume that these conditions are satisfied.

\emph{Proof of} (i): Suppose that $\gamma \in {\mathcal{B}}^{\mathrm{per}}$ and
$\beta' \in \mathcal{B}$ and $|| \beta' || > || \beta_\gamma||$. If $y \in S_\gamma^{\mathrm{per}}$ then by (\ref{crit}) there exists $g \in G$ such that $gy$ is arbitrarily close to some point $x$ in $Z^\mathrm{per}_\gamma \cap \mu^{-1}_{\mathrm{per}}(\gamma)$, so there exists 
$g \in G$ such that
\[  || \mu_{\mathrm{per}}(gy) || - || \gamma || < \epsilon .\]
 Then (b) implies that $ || \mu(gy) || - || \mu_{\mathrm{per}}(gy) ||  < \epsilon $ and (a) implies that $|| \gamma|| - || \beta_\gamma || < \epsilon$ so that $|| \mu(gy) || < 3\epsilon + || \beta_\gamma||$. However by the
definition of $\epsilon$ we know that $3 \epsilon \leq || \beta' || - || \beta_{\gamma}||$, so we conclude that $|| \mu(gy) ||  < || \beta'||$ which implies $gy \notin S_{\beta'}$, and so $y$ does not belong to $S_{\beta'}$.

\emph{Proof of} (ii): Let $y \in Y_\gamma^{\mathrm{per}}$ where
$\gamma \in {\mathcal{B}}^{\mathrm{per}}$,
and consider its gradient flow under the 1-PS associated to $\beta_\gamma$,
which has limit point $x$, say. Then $x \in \overline{ Y_\gamma^{\mathrm{per}}}$ since
$x$ is in the $H$-orbit closure of $y$ and $Y_\gamma^{\mathrm{per}}$ is invariant under $H$, and hence 
\begin{equation} \label{star} \mu_{\mathrm{per}}(x) \cdot \gamma \geq || \gamma||^2.\end{equation} 
Note that
\[ \mu_{\mathrm{per}}(x) \cdot \gamma - \mu(x) \cdot \beta_\gamma = ( \mu_{\mathrm{per}}(x) - \mu(x)) \cdot \gamma + \mu(x) \cdot (\gamma - \beta_\gamma); \]
the assumption (b) implies that $| ( \mu_{\mathrm{per}}(x) - \mu(x)) \cdot \gamma | < \epsilon || \gamma||$ and (a) together with the inequality $|| \mu(x) || \leq M$ implies that $|  \mu(x) \cdot (\gamma - \beta_\gamma)| < M \epsilon$, so that
\begin{equation} \label{dagger} |\mu_{\mathrm{per}}(x) \cdot \gamma - \mu(x) \cdot \beta_\gamma |< \epsilon (M + || \gamma||). \end{equation}
To prove that $y \in \overline{Y_{\beta_\gamma}}$ (at least interpreted as in
Remark \ref{disconn}, which is sufficient for the purposes of this proof)
it suffices to show that $\mu(x).\beta_\gamma > \alpha_i.\beta_\gamma$ for
all $i$ such that $\alpha_i.\beta_\gamma < ||\beta_\gamma||^2$, so it is
enough to show that
\[\mu(x).\beta_\gamma > ||\beta_\gamma||^2 - \epsilon_0.\]
Combining (\ref{star}) and (\ref{dagger}) gives 
$\mu(x) \cdot \beta_\gamma > || \gamma||^2 -  \epsilon (M + || \gamma||)$ and 
so by (a) we get the following inequality
\[\mu(x) \cdot \beta_\gamma > || \beta_\gamma||^2 -  \epsilon (M + || \gamma||+ 2|| \beta_\gamma||) .\]
Again using (a) we have that $ - \epsilon || \gamma|| > -\epsilon|| \beta_\gamma|| - \epsilon^2$ and since $||\beta_\gamma|| \leq M$ we see that
\[ \mu(x) \cdot \beta_\gamma > || \beta_\gamma||^2 - (4M + \epsilon)\epsilon  .\] 
By the choice of $\epsilon$ we know that $(4M+\epsilon)\epsilon \leq 
(4M+1)\epsilon \leq \epsilon_0$ and so
\[  \mu(x) \cdot \beta_\gamma > || \beta_\gamma||^2 -  \epsilon_0 \]
as required.
This completes the proof of (ii) and hence of the proposition.
\end{proof}

\section{Extending to projective schemes} \label{ext proj sch}\markright{extending to projective schemes}

In this section we observe that  the constructions in the previous sections
for nonsingular projective varieties can be extended to the case when 
$X$ is any projective scheme with an ample $G$-linearisation $\mathcal{L}$. For this it is enough to deal with the case when $\mathcal{L}$ is very ample and check that the resulting constructions do not change when $\mathcal{L}$ is replaced
with $\mathcal{L}^{\otimes m}$ for any positive integer $m$. 

Thus let us assume
that
$X$ is a closed subscheme of $\mathbb{P}^n$ and the action of $G$ on $X$ is given by a linear representation $G \rightarrow \mathrm{GL}(n+1)$. For the $G$-action on the ambient projective space $\mathbb{P}^n$ we can define the subvarieties $Z_\beta^{ss}$ and $Y_\beta^{ss}$ as before. We can also define the closed subvariety $\overline{Y}_\beta$ of $\mathbb{P}^n$ and use the scheme structure on $\mathbb{P}^n$ to give this the reduced induced closed scheme structure as in \cite{hartshorne}, II Example 3.2.6. This gives $\hat{S}_\beta := G \times_{P_\beta} \overline{Y}_\beta$ its scheme structure. Then the open subsets $S_\beta \subset \hat{S}_\beta$ and $Y_\beta^{ss} \subset \overline{Y}_\beta$ get an induced scheme structure as open subsets of schemes. We have a stratification
\[ \mathbb{P}^n = \bigsqcup_{\beta \in \mathcal{B}} S_\beta \]
into $G$-invariant locally closed subschemes and the morphism
\[ G \times_{P_\beta} Y_\beta^{ss} \rightarrow \mathbb{P}^n \]
induced by the group action
\[ G \times_{P_\beta} \mathbb{P}^n \rightarrow \mathbb{P}^n \]
is an isomorphism onto $S_\beta$.

To go from the stratification of the ambient projective space $\mathbb{P}^n$ to a stratification of $X$ we intersect the above stratification by taking fibre products. For any subscheme $S$ of $\mathbb{P}^n$ we let \[ S(X):= S \times_{\mathbb{P}^n} X \]
be the fibre product of $X$ and $S$ over $\mathbb{P}^n$. Then $\overline{Y}_\beta(X)$ is a closed subscheme of $X$ and $\hat{S}_\beta (X) = G \times_{P_\beta} \overline{Y}_\beta(X)$ is a projective completion of $S_\beta(X)$. The morphism
\[G \times_{P_\beta} Y_\beta^{ss}(X) \rightarrow X \]
is an isomorphism onto $S_\beta(X)$ by using the universal property of the fibre product $S_\beta(X)$ and the fact that $G \times_{P_\beta} Y_\beta^{ss}  \cong S_\beta$ for the ambient projective space. We have a stratification
\[ X= \bigsqcup_{\beta \in \mathcal{B}} S_\beta(X) \]
into $G$-invariant locally closed subschemes (although for some indices $\beta$ the stratum $S_\beta(X) $ may be empty). We note at this point that this stratification can be refined by taking connected components of $Z_\beta^{ss}(X)$ in the same way as it can for varieties (cf. Remark 3.1).

We can also define the canonical linearisation on $\hat{S}_\beta$ in exactly the same way as we do for varieties and this can be restricted to $\hat{S}_\beta(X)$. In this situation it is still true that the GIT quotient
\[ Z_\beta(X) /\!/_{\mathcal{L}_\beta} \mathrm{Stab} \beta \]
is a categorical quotient of the $G$-action on $S_\beta(X)$.

Finally we observe that the stratification $\{ S_\beta: \beta \in \mathcal{B} \}$ of $\mathbb{P}^n$ is unchanged (except for a minor modification of its
labelling) if we replace $\mathcal{O}_{\mathbb{P}^n}(1)$ with $\mathcal{O}_{\mathbb{P}^n}(m)$ for any $m>0$, and if we regard $\mathbb{P}^n$
as a $G$-invariant linear subspace of a bigger projective space $\mathbb{P}^N$ on
which $G$ acts linearly. Thus we obtain well defined constructions for any
projective scheme $X$ with an ample $G$-linearisation $\mathcal{L}$, which are
unaffected by replacing $\mathcal{L}$ with $\mathcal{L}^{\otimes m}$ for any
$m>0$. Moreover in the case when $X$ is a nonsingular projective variety these
constructions agree with those in $\S \S$ 2-3 (cf. Remark 2.3).

\section{Simpson's construction of moduli of semistable sheaves}\label{simp constr}\markright{simpson's construction of moduli of semistable sheaves} 

Let $W$ be a complex projective scheme with ample invertible sheaf $\mathcal{O}(1)$. We consider the moduli problem of classifying pure coherent algebraic sheaves on $W$ up to isomorphism. From now on we will use the term sheaf to mean coherent algebraic sheaf and unless otherwise specified sheaves will be on $W$. Gieseker introduced a notion of semistability for sheaves in \cite{gieseker_sheaves} and constructed coarse moduli spaces of semistable torsion free sheaves in the case when $W$ is a smooth projective variety of
dimension at most two. Maruyama generalised this to torsion free sheaves over integral projective schemes in \cite{maruyama1,maruyama2}. Later Simpson \cite{simpson} constructed coarse moduli spaces of semistable pure sheaves on an arbitrary complex projective scheme $W$. We follow the more general construction of Simpson where the moduli space of semistable pure sheaves on $W$ of fixed dimension and Hilbert polynomial is constructed as a GIT quotient of a subscheme $Q$ of a quot scheme by the action of a special linear group $G$. The linearisation is given by using Grothendieck's embedding of the quot scheme into a Grassmannian and then using the Pl\"ucker embedding of the Grassmannian into projective space.

We fix a rational polynomial $P \in \mathbb{Q}[x]$ of degree $e$ which takes integer values when $x$ is integral. Recall that a sheaf is pure of dimension $e$ if its support has dimension $e$ and all nonzero subsheaves have support of dimension $e$. Then we consider pure sheaves of dimension $e$ with Hilbert polynomial $P$ calculated with respect to $\mathcal{O}(1)$.

\begin{defn}
Let $\mathcal{F}$ be a pure sheaf of dimension $e$ over $W$. We define the {\it 
multiplicity} of $\mathcal{F}$ to be $r(\mathcal{F})=e! a_e$ where $a_e$ is the leading coefficient in the Hilbert polynomial of $e$. If $\mathcal{F}$ is torsion free this is just the rank of $\mathcal{F}$. The {\it reduced Hilbert polynomial} of $\mathcal{F}$ is defined to be the quotient $P(\mathcal{F})/r(\mathcal{F})$.
\end{defn}

\begin{defn}
A sheaf $\mathcal{F}$ is {\it semistable} if it is pure and every nonzero subsheaf $\mathcal{F}' \subset \mathcal{F}$ satisfies \[ \frac{P(\mathcal{F}')}{r(\mathcal{F}')} \leq \frac{P(\mathcal{F})}{r(\mathcal{F})}  \]
where the ordering on polynomials is given by lexicographic ordering of their coefficients. The sheaf is {\it stable} if the above inequality is strict for every proper nonzero subsheaf.
A semistable sheaf $\mathcal{F}$ has a Jordan--H\"{o}lder filtration
\[0 = \mathcal{F}_0 \subset \mathcal{F}_1 \subset \cdots \subset \mathcal{F}_s
= \mathcal{F}\]
where $\mathcal{F}_i/\mathcal{F}_{i-1}$ is stable with reduced Hilbert
polynomial $P(\mathcal{F})/r(\mathcal{F})$ for $1 \leq i \leq s$. This filtration is not in general canonical but the associated graded sheaf
\[Gr^{JH}(\mathcal{F}) = \bigoplus_{i=1}^s \mathcal{F}_i/\mathcal{F}_{i-1}\]
is canonically associated to $\mathcal{F}$ (up to isomorphism). Two semistable sheaves $\mathcal{F}$ and $\mathcal{G}$ over $W$ are {\it S-equivalent} if 
$Gr^{JH}(\mathcal{F})$ and $Gr^{JH}(\mathcal{G})$ are isomorphic.
\end{defn}

 Simpson shows that the semistable sheaves with Hilbert polynomial $P$ are bounded (see \cite{simpson}, Theorem 1.1), and hence we can choose $n >\!> 0$ so that all such sheaves are $n$-regular. In particular this means that for any such sheaf $\mathcal{F}$ the evaluation map $H^0(\mathcal{F}(n)) \otimes \mathcal{O}(-n) \rightarrow \mathcal{F}$
is surjective and the higher cohomology of $\mathcal{F}(n)$ vanishes, i.e.
\[ H^{i}(\mathcal{F}(n)) = 0 \: \: \mathrm{for} \: i > 0, \]
so that $P(n) =P(\mathcal{F},n)=\mathrm{dim} H^0(\mathcal{F}(n))$. 

Let $V$ be a vector space of dimension $P(n)$. Then the evaluation map for $\mathcal{F}$ and a choice of isomorphism $H^0(\mathcal{F}(n)) \cong V$ determine a point $\rho  : V \otimes \mathcal{O}(-n) \rightarrow \mathcal{F}$ in the quot scheme
\[ \mathrm{Quot}(V \otimes \mathcal{O}(-n), P) \]
of quotients with Hilbert polynomial $P$ of the sheaf $V \otimes \mathcal{O}(-n)$
on $W$.
We consider the open subscheme $Q \subset \mathrm{Quot}(V \otimes \mathcal{O}(-n), P) $ consisting of quotients $\rho : V \otimes \mathcal{O}(-n) \rightarrow \mathcal{F}$ such that $\mathcal{F}$ is pure of dimension $e$ and 
the map on sections $H^0(\rho(n)):V \to H^0(\mathcal{F}(n))$ 
induced by $\rho$ tensored with the identity on $\mathcal{O}(n)$ is an isomorphism. The group $G:=\mathrm{SL}(V)$ acts on this quot scheme by acting on the vector space $V$, so that $g \cdot \rho$ is the composition
\begin{center}{$
\begin{diagram} \node{g \cdot \rho : V \otimes \mathcal{O}(-n)} \arrow{e,t}{g^{-1 }\cdot} \node{V \otimes \mathcal{O}(-n)} \arrow{e,t}{\rho} \node{\mathcal{F}} \end{diagram}
$}\end{center}
for $g \in G$ and $\rho : V \otimes \mathcal{O}(-n) \rightarrow \mathcal{F}$ in the quot scheme. The subscheme $Q$ is preserved by this action and the $G$-orbits correspond to isomorphism classes of sheaves. 

Simpson considers a linearisation of this action given by an equivariant embedding of the quot scheme $\mathrm{Quot}(V \otimes \mathcal{O}(-n),P)$ 
into a Grassmannian. Grothendieck showed that for $m >\!> n$ the morphism
\[ \mathrm{Quot}(V \otimes \mathcal{O}(-n), P)  \longrightarrow \mathrm{Gr}(V \otimes H^0(\mathcal{O}(m-n)), P(m)) \]
\[ \rho : V \otimes \mathcal{O}(-n) \rightarrow \mathcal{F} \longmapsto H^0(\rho(m)) : V \otimes H  \rightarrow H^0(\mathcal{F}(m)) \]
is an embedding, where $H:=H^0(\mathcal{O}(m-n))$ and
$ \mathrm{Gr}(V \otimes H^0(\mathcal{O}(m-n)), P(m))$ is the Grassmannian of
$P(m)$-dimensional quotients of the vector space $V \otimes H$.
The Pl\"ucker embedding 
\[ \mathrm{Gr}(V \otimes H, P(m))  \hookrightarrow \mathbb{P}((\wedge^{P(m)} (V \otimes H))^*) \]
\[ H^0(\rho(m)) \mapsto \wedge^{P(m)} H^0(\rho(m))  \]
then gives an embedding of $  \mathrm{Quot}(V \otimes \mathcal{O}(-n), P)   $
in the projective space $ \mathbb{P}((\wedge^{P(m)} (V \otimes H))^*)  $.
Let $\overline{Q}$ denote the closure of $Q$ in the quot scheme 
$  \mathrm{Quot}(V \otimes \mathcal{O}(-n), P)   $, let
 $\mathcal{U}$ be the restriction to $\overline{Q} \times W$ of the
universal quotient sheaf on the product of the quot scheme and $W$, and let $\pi_{\overline{Q}}$ and $ \pi_W$ be the projections from $\overline{Q} \times W $ to $\overline{Q}$ and $W$. Then since $m>\!>n$ the higher cohomology groups
$H^i(\mathcal{F}(m))$ for $i>0$ all vanish for $\rho : V \otimes \mathcal{O}(-n) \rightarrow \mathcal{F}$ in $\mathrm{Quot}(V \otimes \mathcal{O}(-n),P)$ and
\begin{equation} \label{label} \mathcal{L} = \mathrm{det}(\pi_{\overline{Q}*}(\mathcal{U} \otimes \pi_W^*\mathcal{O}(m))) \end{equation} 
is the ample invertible sheaf corresponding to the embedding of $\overline{Q}$ into the projective space 
 $ \mathbb{P}((\wedge^{P(m)} (V \otimes H))^*)  $ above. There is a natural lift of the $G$-action on $\overline{Q}$ to the universal quotient $\mathcal{U}$ and this gives an action of $G$ on $\mathcal{L}$; by abuse of notation we let $\mathcal{L}$ denote this linearisation as well as the line bundle underlying it. We assume $n$ and $m$ are both chosen sufficiently large (for details see \cite{simpson}).

\begin{thm}(\cite{simpson}, Theorem 1.21) \label{simthm}
Let $W$ be a projective scheme, $e \leq \mathrm{dim}(W) $ a positive integer and $P$ a Hilbert polynomial of degree $e$. Then if $m>\!>n>\!>0$ the GIT quotient $\overline{Q} /\!/_{\mathcal{L}} G$ defined as above is a coarse moduli space for semistable sheaves of pure dimension $e$ with Hilbert polynomial $P$ up to
S-equivalence.
\end{thm}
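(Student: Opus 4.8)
The plan is to follow the Gieseker--Maruyama--Simpson strategy of realising the moduli problem as a GIT quotient and then matching the two notions of semistability. The essential point is that, by construction, the $G$-orbits in $Q$ are exactly the isomorphism classes of those quotients $\rho : V \otimes \mathcal{O}(-n) \to \mathcal{F}$ for which $\mathcal{F}$ is pure of dimension $e$ with Hilbert polynomial $P$ and $H^0(\rho(n))$ is an isomorphism. So once we know which points of $\overline{Q}$ are GIT-semistable with respect to $\mathcal{L}$, the quotient $\overline{Q}/\!/_{\mathcal{L}} G$ will parametrise the corresponding sheaves, and the work is to show that GIT-semistability of $\rho$ corresponds precisely to Gieseker-semistability of $\mathcal{F}$.

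First I would invoke boundedness. By Simpson's Theorem 1.1 in \cite{simpson} the family of semistable sheaves of dimension $e$ and Hilbert polynomial $P$ is bounded, so I can choose $n \gg 0$ so that every such sheaf, together with every subsheaf that could destabilise it, is $n$-regular; this guarantees on the one hand that each semistable $\mathcal{F}$ gives a point of $Q$ with $V \cong H^0(\mathcal{F}(n))$, and on the other hand that destabilising subsheaves are themselves cut out by subspaces $V' = H^0(\mathcal{F}'(n)) \subset V$. Taking $m \gg n$ then makes the Grothendieck morphism into the Grassmannian an embedding and makes $\mathcal{L}$ of (\ref{label}) ample, as already recorded in the excerpt.

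The heart of the argument, and the step I expect to be the main obstacle, is the Hilbert--Mumford calculation. A one-parameter subgroup $\lambda$ of $G = \mathrm{SL}(V)$ is equivalent to a weighted filtration $0 \subset V_1 \subset \cdots \subset V_s = V$; pushing this forward by $\rho$ produces a filtration of $\mathcal{F}$ by subsheaves $\mathcal{F}_i$, and the limit $\lim_{t \to 0} \lambda(t) \cdot \rho$ is the point of $\overline{Q}$ given by the associated graded object. Computing the weight of $\lambda$ on the fibre of $\mathcal{L}$ over this limit, via the Pl\"ucker coordinates, expresses $\mu(\rho, \lambda)$ as a sum over the filtration of terms comparing $\dim H^0(\mathcal{F}_i(m))$ with $(\dim V_i / \dim V)\, P(m)$. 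For $m \gg 0$ these dimensions equal the Hilbert polynomials $P(\mathcal{F}_i, m)$, and the sign of the leading asymptotics in $m$ is governed exactly by the comparison of reduced Hilbert polynomials $P(\mathcal{F}_i)/r(\mathcal{F}_i)$ against $P(\mathcal{F})/r(\mathcal{F})$. The delicate part is to control these asymptotics uniformly: one direction follows immediately since every subspace of $V$ yields a test subsheaf, but the reverse direction requires the uniform $n$-regularity from the boundedness step to ensure that it suffices to test semistability against subsheaves of the special form $\rho(V' \otimes \mathcal{O}(-n))$ arising from subspaces of $V$.

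Granting this equivalence, I would apply Mumford's GIT: since $G$ is reductive and $\mathcal{L}$ is ample, the projective categorical quotient $\overline{Q}/\!/_{\mathcal{L}} G$ exists, and the computation identifies the GIT-semistable locus $\overline{Q}^{ss}$, which one checks lies inside $Q$, with the locus of points representing semistable sheaves. It remains to verify the coarse moduli property. Closed orbits in $Q^{ss}$ correspond to polystable sheaves, and two semistable points lie in orbits whose closures meet precisely when the corresponding sheaves are S-equivalent, that is, have isomorphic graded objects $Gr^{JH}$; hence the points of $\overline{Q}/\!/_{\mathcal{L}} G$ are in bijection with S-equivalence classes. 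Finally I would use the local universal property of the quot scheme, together with the categorical quotient property of the GIT quotient, to check that $\overline{Q}/\!/_{\mathcal{L}} G$ corepresents the moduli functor of semistable sheaves with Hilbert polynomial $P$, which is exactly the statement that it is a coarse moduli space.
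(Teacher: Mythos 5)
You should first be aware that the paper does not prove this statement at all: it is quoted verbatim as Theorem 1.21 of \cite{simpson}, and the only ingredient of Simpson's argument the paper reproduces is the Hilbert--Mumford computation (Lemma \ref{hmf}, following Huybrechts--Lehn). Your outline does follow the actual Gieseker--Maruyama--Simpson strategy that lies behind the citation, so at the level of architecture it is the right proof. But as a proof it has a genuine gap at exactly the step you yourself identify as the main obstacle, and the repair you propose does not work.

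The gap is this: you claim that boundedness (Theorem 1.1 of \cite{simpson}) lets you choose $n \gg 0$ so that every semistable $\mathcal{F}$ \emph{together with every subsheaf that could destabilise it} is $n$-regular, and that this ``uniform $n$-regularity'' suffices to match GIT semistability with sheaf semistability. However, the family of \emph{all} subsheaves $\mathcal{F}' \subset \mathcal{F}$, as $\mathcal{F}$ ranges over the (bounded) family parametrised by $Q$, is unbounded --- their Hilbert polynomials are not even finite in number --- so no single $n$ makes them all regular, and one cannot read off $h^0(\mathcal{F}'(n))$ or $h^0(\mathcal{F}'(m))$ from Hilbert polynomials. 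Simpson's proof handles this with a genuinely new analytic input: the uniform estimates on $h^0$ of an arbitrary coherent sheaf in terms of its slope data (his Corollary 1.7, the Le Potier--Simpson estimates), combined with a reduction to \emph{saturated} subsheaves, whose slopes are bounded below when they destabilise and which therefore do form a bounded family. Without this estimate the comparison of $\mu^{\mathcal{L}}(\rho,\lambda)$ with reduced Hilbert polynomials only works for filtrations coming from a bounded collection of subsheaves, and the equivalence of semistabilities --- uniformly in $\rho$, for a single choice of $m \gg n \gg 0$ --- does not follow. Two further steps are asserted rather than proved, though both are standard once the estimates are in place: that $\overline{Q}^{ss} \subseteq Q$ (points of $\overline{Q} \setminus Q$ must be shown unstable, which again uses the estimates, since $H^0(\rho(n))$ failing to be injective or the sheaf failing purity must be converted into a destabilising one-parameter subgroup), and that orbit closures in $Q^{ss}$ meet precisely for S-equivalent sheaves, which requires a semicontinuity argument over a curve of the kind the paper itself spells out in its rigidified setting in Remark \ref{JH and S-equiv}.
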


\subsection{Calculating the Hilbert--Mumford function}\label{calc HM fn}

The Hilbert--Mumford criterion (see \cite{mumford}, Theorem 2.1) gives a way to test the (semi)stability of a point $\rho : V \otimes \mathcal{O}(-n) \rightarrow \mathcal{F}$ of $Q$ in terms of one-parameter subgroups of $G$. If $\lambda$ is a 1-PS then $\lim_{t\rightarrow 0} \lambda(t) \cdot \rho  \in \overline{Q}$ is a fixed point for the $\mathbb{C}^*$-action induced by $\lambda$, and so the group $\mathbb{C}^*$ acts on the fibre of $\mathcal{L}$ over this fixed point by some character of $\mathbb{C}^*$, say $t \mapsto t^w$ for some integer $w$. The Hilbert--Mumford function of $\rho : V \otimes \mathcal{O}(-n) \rightarrow \mathcal{F}$ evaluated at $\lambda$ is defined as
\[ \mu^\mathcal{L}(\rho, \lambda) := w .\]
Let
\[ M^{\mathcal{L}}(\rho)=\inf_{} \frac{\mu^\mathcal{L}(\rho, \lambda)}{||\lambda ||}, \]
where the infimum is taken over all non-trivial one-parameter subgroups $\lambda$ of $G$, and as before the norm is determined by an invariant inner product on the Lie algebra of the maximal compact subgroup $\mathrm{SU}(V)$ of $G=\mathrm{SL}(V)$. 
Then the Hilbert--Mumford criterion states that $\rho$ is semistable with respect to $\mathcal{L}$ if and only if 
$\mu^\mathcal{L}(\rho, \lambda) \geq 0$ for every 1-PS $\lambda$ of $G$,
or equivalently $M^{\mathcal{L}}(\rho) \geq 0$. If $\rho$ is unstable with respect to $\mathcal{L}$ then $M^{\mathcal{L}}(\rho)$ is negative and a non-divisible 1-PS achieving this value is said to be {adapted} to $\rho$
(cf. Remark \ref{KeN}).
In this section we will calculate the Hilbert--Mumford function $\mu^\mathcal{L}(\rho, \lambda)$ for any 1-PS $\lambda$  of $G$.

First of all we make use of the fact that any 1-PS  induces a decomposition of $V$
as a direct sum of weight spaces:
\[ \begin{array}{ccc} \left\{ \begin{array}{c}1 \mhyphen \mathrm{PSs} \:  \mathrm{ \: of \:} \mathrm{SL}(V)  \end{array} \right\} & \longleftrightarrow & \left\{ \begin{array}{c} \mathrm{decompositions \:} V= \bigoplus_{k \in \mathbb{Z}} V_k \\ \mathrm{such \: that \:} \sum k \mathrm{dim} V_k = 0 \end{array}  \right\} \\ & & \\ \lambda & \mapsto & V_k:=\{ v \in V : \lambda(t) \cdot v = t^k v  \}.  \end{array} \]
The relation $ \sum k \mathrm{dim} V_k = 0$
ensures that we get a 1-PS of the special linear group as opposed to the general linear group. Such a decomposition determines a filtration of $V$ given by
\[ \cdots \subseteq V_{\geq k+1} \subseteq V_{\geq k} \subseteq V_{\geq k-1} \subseteq \cdots \]
where $V_{ \geq k} := \oplus_{l\geq k} V_l$. 
There are only finitely many integers $k$ such that $V_k \neq 0$, say 
\[k_1 > \cdots > k_s; \]
let $V^{(i)} = V_{\geq k_i}$ for $i=1, \cdots s$. Then we obtain a map
\[ \begin{array}{ccc} \left\{ \begin{array}{c}1 \mhyphen \mathrm{PSs} \:  \mathrm{ \: of \:} \mathrm{SL}(V) \\ \lambda  \end{array} \right\} & \longrightarrow & \left\{ \begin{array}{c}  0=V^{(0)} \subset V^{(1)} \subset \cdots \subset V^{(s)}=V \\  \mathrm{filtrations \: of \:} V \: \mathrm{and \: integers \:} k_1 > \cdots > k_s\\ \mathrm{such \: that \:} \sum k_i \mathrm{dim} V^{(i)}/V^{(i-1)} = 0 \end{array}  \right\} \\  \end{array} \]
Let $\lambda$ be a 1-PS of $G=\mathrm{SL}(V)$ and let  $\rho : V \otimes \mathcal{O}(-n) \rightarrow \mathcal{F}$  be a point in $\overline{Q}$. Then the filtration of $V$ determined by $\lambda$ induces a filtration of $\mathcal{F}$ given by
\[ 0=\mathcal{F}^{(0)} \subset \mathcal{F}^{(1)} \subset \cdots \subset \mathcal{F}^{(s)}=\mathcal{F}\]
where $ \mathcal{F}^{(i)}= \rho (V^{(i)} \otimes \mathcal{O}(-n) ) $. 
Let $\mathcal{F}_{\geq k}$ denote the image of $V_{\geq k} \otimes \mathcal{O}(-n)$ under $\rho$ for any integer $k$. Then $\rho$ induces
\[ \rho_{k} : V_{k} \otimes \mathcal{O}(-n) \rightarrow \mathcal{F}_{k}= \mathcal{F}_{\geq k}/ \mathcal{F}_{\geq k+1} \] 
for each integer $k$; here $\mathcal{F}_k$ and $\rho_k$ can only be nonzero if $k=k_i$ for some $i$ with  $1 \leq i \leq s$. We define
\begin{equation} \label{rhobar} \overline{\rho} = \bigoplus_{k \in \mathbb{Z}}\rho_k :  \bigoplus_{k \in \mathbb{Z}}V_k  \otimes \mathcal{O}(-n)
\rightarrow \overline{\mathcal{F}}= \bigoplus_{k \in \mathbb{Z}} \mathcal{F}_k 
\end{equation}
(cf. \cite{huybrechts} $\S$4.4).
We now have a formula for the Hilbert--Mumford function.

\begin{lemma}\label{hmf}
The Hilbert--Mumford function evaluated at a one-parameter subgroup
$\lambda$ of $G=\mathrm{SL}(V)$ 
for a point $\rho: V \otimes \mathcal{O}(-n)
\to \mathcal{F}$ in $\overline{Q}$  is given by
\[ \mu^\mathcal{L}(\rho, \lambda)  = \sum_{i=1}^{s-1} (k_i - k_{i+1})\left(P(\mathcal{F}^{(i)}, m) - \mathrm{dim} V^{(i)}\frac{P(\mathcal{F},m)}{P(\mathcal{F},n)}\right) \]
where $V^{(i)}$ and $\mathcal{F}^{(i)}$ are defined as above.
\end{lemma}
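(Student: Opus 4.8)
The plan is to compute the Hilbert--Mumford function $\mu^{\mathcal{L}}(\rho,\lambda)$ directly from its definition as the weight of the induced $\mathbb{C}^*$-action on the fibre of $\mathcal{L}$ over the limit point $\lim_{t\to 0}\lambda(t)\cdot\rho$. Since $\mathcal{L}=\det(\pi_{\overline{Q}*}(\mathcal{U}\otimes\pi_W^*\mathcal{O}(m)))$, the fibre of $\mathcal{L}$ over a point $\rho':V\otimes\mathcal{O}(-n)\to\mathcal{F}'$ is (up to a line bundle identification) the top exterior power of $H^0(\mathcal{F}'(m))$, so the weight we want is the weight of the $\mathbb{C}^*$-action on $\det H^0(\mathcal{F}_0(m))$, where $\mathcal{F}_0$ is the sheaf at the limit point. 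First I would identify this limit sheaf: the standard fact (see \cite{huybrechts} $\S4.4$) is that as $t\to 0$ the quotient $\lambda(t)\cdot\rho$ degenerates to the associated graded quotient $\overline{\rho}:\bigoplus_k V_k\otimes\mathcal{O}(-n)\to\overline{\mathcal{F}}=\bigoplus_k\mathcal{F}_k$ defined in (\ref{rhobar}), on which $\mathbb{C}^*$ acts through $\lambda$ with $V_k$ in weight $k$.

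With the limit sheaf identified as $\overline{\mathcal{F}}=\bigoplus_k\mathcal{F}_k$, the next step is to read off the weight. The group $\mathbb{C}^*$ acts on $H^0(\mathcal{F}_k(m))$ with weight $-k$ (the sign coming from the convention $g\cdot\rho=\rho\circ g^{-1}$ in the action on $Q$), so the total weight on $\det H^0(\overline{\mathcal{F}}(m))=\bigotimes_k\det H^0(\mathcal{F}_k(m))$ is
\[ \mu^{\mathcal{L}}(\rho,\lambda) = -\sum_{k}k\,\dim H^0(\mathcal{F}_k(m)) = -\sum_{i=1}^s k_i\,P(\mathcal{F}_{k_i},m), \]
where I have used $m\gg n$ so that $\dim H^0(\mathcal{F}_k(m))=P(\mathcal{F}_k,m)$ with no higher cohomology (here one must account for the fixed trivialising choice on $\mathcal{L}$, but that contributes a term proportional to the $\mathrm{SL}(V)$-relation $\sum k_i\dim V^{(i)}/V^{(i-1)}=0$ and so can be arranged to vanish or absorbed into the normalisation). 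Since $\mathcal{F}_{k_i}=\mathcal{F}^{(i)}/\mathcal{F}^{(i-1)}$ is the associated graded piece, we have $P(\mathcal{F}_{k_i},m)=P(\mathcal{F}^{(i)},m)-P(\mathcal{F}^{(i-1)},m)$.

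The final step is purely algebraic rearrangement: I would substitute $P(\mathcal{F}_{k_i},m)=P(\mathcal{F}^{(i)},m)-P(\mathcal{F}^{(i-1)},m)$ into the weight formula and perform an Abel summation (summation by parts) in the index $i$, using $P(\mathcal{F}^{(0)},m)=0$ and $P(\mathcal{F}^{(s)},m)=P(\mathcal{F},m)$. This converts the coefficients $k_i$ into the consecutive differences $(k_i-k_{i+1})$ and replaces the graded pieces by the cumulative terms $P(\mathcal{F}^{(i)},m)$. To obtain the stated normalised form, I would then use the $\mathrm{SL}(V)$ constraint $\sum_i k_i\dim(V^{(i)}/V^{(i-1)})=0$ to rewrite a constant multiple of $P(\mathcal{F},m)$ in terms of $\dim V^{(i)}$ and $P(\mathcal{F},n)=\dim V$, producing the subtracted terms $\dim V^{(i)}\,P(\mathcal{F},m)/P(\mathcal{F},n)$ inside the sum over $i=1,\dots,s-1$.

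I expect the main obstacle to be the bookkeeping of the weight of the $\mathbb{C}^*$-action on the determinant line $\mathcal{L}$ at the limit point, in particular getting the sign and the $\mathrm{SL}(V)$-normalisation correct so that the final expression matches exactly; the identification of the flat limit with $\overline{\mathcal{F}}$ and the summation by parts are standard, but tracking how the trivialisation of $\mathcal{L}$ interacts with the weight decomposition and the traceless condition is where care is needed. The requirement $m\gg n$ is essential throughout, since it guarantees the vanishing of higher cohomology for every graded piece $\mathcal{F}_{k_i}(m)$ so that the Hilbert polynomial correctly computes $\dim H^0$.
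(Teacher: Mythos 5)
Your overall strategy is exactly the paper's: identify the flat limit $\lim_{t\to 0}\lambda(t)\cdot\rho$ with the graded quotient $\overline{\rho}$ via \cite{huybrechts} Lemma 4.4.3, use $m>\!>n$ to identify the fibre of $\mathcal{L}=\det(\pi_{\overline{Q}*}(\mathcal{U}\otimes\pi_W^*\mathcal{O}(m)))$ with a determinant line of $H^0(\overline{\mathcal{F}}(m))$, read off the weight piece by piece, and then convert $\sum_i k_i P(\mathcal{F}_{k_i},m)$ into the stated form by the traceless relation $\sum_i k_i\dim V_{k_i}=0$, Abel summation, and $\dim V=P(\mathcal{F},n)$. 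That part of the plan is fine and matches the paper step for step.

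However, there is a genuine sign error at the one step you yourself flagged as delicate, and as written it produces the negative of the lemma. You identify the fibre of $\mathcal{L}$ at $\overline{\rho}$ with $\det H^0(\overline{\mathcal{F}}(m))$ itself; in the paper's conventions (the Pl\"ucker embedding into $\mathbb{P}((\wedge^{P(m)}(V\otimes H))^*)$, with $\mu^{\mathcal{L}}(\rho,\lambda)$ defined as the weight on the fibre at the limit point) the fibre is the \emph{dual} line $\det(H^0(\overline{\mathcal{F}}(m)))^*=\wedge^{P(m)}H^0(\overline{\mathcal{F}}(m))^*$. Since $\mathbb{C}^*$ acts on $\rho_k$, hence on $\det H^0(\mathcal{F}_k(m))$, with weight $-kP(\mathcal{F}_k,m)$, the weight on the fibre of $\mathcal{L}$ is \emph{minus} the sum of these, giving $\mu^{\mathcal{L}}(\rho,\lambda)=+\sum_{i=1}^s k_iP(\mathcal{F}_{k_i},m)$, whereas your displayed formula is $-\sum_{i=1}^s k_iP(\mathcal{F}_{k_i},m)$. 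This is not a discrepancy that can be ``absorbed into the normalisation'': the trivialisation ambiguity you invoke only contributes via the $\mathrm{SL}(V)$-relation and cannot flip the overall sign, and your Abel summation applied to your intermediate formula yields $-\sum_{i=1}^{s-1}(k_i-k_{i+1})\bigl(P(\mathcal{F}^{(i)},m)-\dim V^{(i)}\tfrac{P(\mathcal{F},m)}{P(\mathcal{F},n)}\bigr)$, the negative of the claim. A quick internal consistency check settles which sign is right: for $\rho\in R_\tau$ and the Harder--Narasimhan-adapted 1-PS $\lambda_\beta$ of $\S$\ref{refined strat}, the stated formula gives $\mu^{\mathcal{L}}(\rho,\lambda_\beta)=-\sum_i\beta_i^2P_i(n)=-||\beta||^2<0$, as it must be for an unstable point under the paper's Hilbert--Mumford convention (cf. Lemma \ref{gr object is in Z}); your sign would make every destabilising 1-PS evaluate positively, contradicting instability. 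So the fix is purely local --- dualise the fibre identification --- after which the remainder of your argument goes through exactly as in the paper.
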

\begin{proof} By \cite{huybrechts} Lemma 4.4.3 the fixed point $\lim_{t \rightarrow 0} \lambda(t) \cdot \rho$ in $\overline{Q}$
is equal to $ \overline{\rho}$. To calculate the value of the Hilbert--Mumford function we need to calculate the weight of the $\mathbb{C}^*$-action on the fibre at  ${\overline{\rho}}$ of the line bundle $\mathcal{L}$ defined at (\ref{label}). For this we follow the argument of \cite{huybrechts} Lemma 4.4.4, 
though using a left action as opposed to a right action. 
Since $m>\!>n>\!>0$ we have $H^i(\overline{\mathcal{F}}(m))=0$ for $i>0$ and
the line bundle 
\[\mathcal{L} = \mathrm{det}(\pi_{\overline{Q}*}(\mathcal{U} \otimes \pi_W^*\mathcal{O}(m)))
\]
has fibre
\[\mathrm{det} ( H^0(\overline{\mathcal{F}}(m)))^* =
\wedge^{P(m)} H^0(\overline{\mathcal{F}}(m))^* \]
at $\overline{\rho}$. The $\mathbb{C}^*$-action induced by $\lambda$ on $\rho_k$ has weight $-k$ because $\lambda(t) \cdot \rho_k$ is the composition
\begin{center}{$
\begin{diagram}  \node{ V_k \otimes \mathcal{O}(-n)} \arrow{e,t}{\lambda^{-1}(t) } \node{V_k \otimes \mathcal{O}(-n)} \arrow{e,t}{\rho_k} \node{\mathcal{F}_k} \end{diagram}
$}\end{center}
and
\[ \lambda^{-1}(t) \cdot v_k = t^{-k} v_k \: \mathrm{for \: all \:} v_k \in V_k. \]
Therefore the weight of the $\mathbb{C}^*$-action on $\mathrm{det}H^0(\mathcal{F}_k(m))$ is equal to $-k$ times the dimension of $H^0(\mathcal{F}_k(m))$, which is the value $P(\mathcal{F}_k,m)$ at $m$ of the Hilbert polynomial $P(\mathcal{F}_k)$. The weight of the $\mathbb{C}^*$-action on the fibre
of $\mathcal{L}$  over $\overline{\rho}$ is minus the sum of the weights of the $\mathbb{C}^*$-action on $\mathrm{det}H^0(\rho_k(m))$, and so
\[  \mu^\mathcal{L}(\rho, \lambda) = \sum_{k \in \mathbb{Z}} k P(\mathcal{F}_k, m) = \sum_{i=1}^s k_i P(\mathcal{F}_{k_i}, m) .\]
Since $\lambda$ is a 1-PS of the special linear group $G=\mathrm{SL}(V)$ we have $\sum_{i=1}^s k_i \mathrm{dim} V_{k_i} = 0$, so we may write this as
\[  \mu^\mathcal{L}(\rho, \lambda) = \sum_{i=1}^s  k_i \left(P(\mathcal{F}_{k_i},
 m) -  \mathrm{dim} V_{k_i} \frac{P(\mathcal{F},m)}{P(\mathcal{F},n)} \right)  \]
\[ = \sum_{i=1}^s  k_i \left(P(\mathcal{F}^{(i)},m) - P(\mathcal{F}^{(i+1)},
 m) -  \mathrm{dim} V^{(i)} \frac{P(\mathcal{F},m)}{P(\mathcal{F},n)}
+ \mathrm{dim} V^{(i+1)} \frac{P(\mathcal{F},m)}{P(\mathcal{F},n)} \right)\] 
\[ = k_s\left(P(\mathcal{F},
 m) -  \mathrm{dim} V \frac{P(\mathcal{F},m)}{P(\mathcal{F},n)}\right) +
\sum_{i=1}^{s-1}  (k_i - k_{i+1}) \left(P(\mathcal{F}^{(i)},m) -  \mathrm{dim} V^{(i)} \frac{P(\mathcal{F},m)}{P(\mathcal{F},n)} \right)
\]
which gives the required result since $ \mathrm{dim} V ={P(\mathcal{F},n)}$.
\end{proof} 

\section{The stratification of the closure of $Q$}\label{strat of Q}\markright{the stratification of the closure of $Q$}

We consider the group $G=\mathrm{SL}(V)$ acting on the subscheme $\overline{Q}$ 
 of the quot scheme
$\mathrm{Quot}(V \otimes \mathcal{O}(-n), P) $
with respect to the linearisation $\mathcal{L}$ defined at (\ref{label}), for which the GIT quotient 
$\overline{Q}/\!/_{\mathcal{L}} G$ is a coarse moduli space
for semistable sheaves on $W$ with Hilbert polynomial $P$. 
The linearisation $\mathcal{L}$ defines a $G$-equivariant embedding of $\overline{Q}$ in the projective space $ \mathbb{P}((\wedge^{P(m)} (V \otimes H))^*)  $ and we can choose a K\"{a}hler structure on 
$ \mathbb{P}((\wedge^{P(m)} (V \otimes H))^*)  $ which is invariant under the maximal compact subgroup $\mathrm{SU}(V)$ of $G=\mathrm{SL}(V)$. 
There is a stratification of this ambient projective space associated to this action and by intersecting this with $\overline{Q}$ we obtain a stratification
\[ \overline{Q} = \bigsqcup_{\beta \in \mathcal{B}} S_\beta\]
into G-invariant locally closed subschemes as in $\S$\ref{ext proj sch}. The aim of this
section is to prove Proposition \ref{strats agree} which relates the stratum
$S_\beta$ containing a point $\rho: V \otimes \mathcal{O}(-n)
\to \mathcal{F}$ of ${Q}$ to the Harder--Narasimhan type of the sheaf $\mathcal{F}$. Versions of this result have been well known for a long
time (cf. \cite{atiyahbott,kirwanarxiv,newstead_bundles} for the case when $W$ is
a nonsingular projective curve) but we provide a proof for the sake of
completeness.

Fix a basis of $V$ and pick the maximal torus $T \subset \mathrm{SU}(V)$ consisting of diagonal matrices of determinant 1 with entries in $S^1$. Then the Lie algebra of $T$ consists of purely imaginary tracefree diagonal matrices. We choose a positive Weyl chamber given by \[ \mathfrak{t}_+= \left\{ i \mathrm{diag}(a_1, \cdots , a_{\mathrm{dim}(V)}) : \begin{array}{c} a_i \in \mathbb{R} \mathrm{\: \: such \: that \:} \sum a_i =0   \\ \mathrm{and \:} a_1 \geq a_2 \geq \cdots \geq a_{\mathrm{dim}(V)} \end{array}\right\}.\]
The indexing set $\mathcal{B}$ for the stratification
$\{S_\beta:\beta \in \mathcal{B}\}$ is a finite set of points in $\mathfrak{t}_+$.
We note at this point that the strata $S_\beta$ may not be connected and so may be stratified further into their connected components (cf. Remark \ref{disconn}).

\subsection{The refined stratum associated to a fixed Harder--Narasimhan type}\label{refined strat}

Any sheaf of pure dimension $e$ over $W$ has a canonical filtration by subsheaves whose successive quotients are semistable with decreasing reduced Hilbert polynomials, known as the Harder--Narasimhan filtration. 

\begin{defn}
Let $\mathcal{F}$ be a pure sheaf; then its {\it Harder--Narasimhan filtration} is a filtration
\[ 0 = \mathcal{F}^{(0)} \subsetneq \mathcal{F}^{(1)} \subsetneq \cdots \subsetneq \mathcal{F}^{(s)}=\mathcal{F} \]
such that the successive quotients $\mathcal{F}^{(i)}/\mathcal{F}^{(i-1)}$ are semistable with decreasing reduced Hilbert polynomials
\[ \frac{P(\mathcal{F}^{(1)})}{r(\mathcal{F}^{(1)})} >  \frac{P(\mathcal{F}^{(2)}/\mathcal{F}^{(1)})}{r(\mathcal{F}^{(2)}/\mathcal{F}^{(1)})} > \cdots >  \frac{P(\mathcal{F}^{(s)}/\mathcal{F}^{(s-1)})}{r(\mathcal{F}^{(s)}/\mathcal{F}^{(s-1)})} .\]
We will denote by $\mathrm{Gr}^{HN}(\mathcal{F})$ the associated
graded sheaf
\[\mathrm{Gr}^{HN}(\mathcal{F}) = \bigoplus_{i=1}^s \mathcal{F}^{(i)}/\mathcal{F}^{(i-1)}.\]
We call the first sheaf $\mathcal{F}^{(1)}$ appearing in the Harder--Narasimhan filtration the {\it maximal destabilising subsheaf}. The Harder--Narasimhan type of $\mathcal{F}$ is specified by the vector of Hilbert polynomials of the successive quotients,
\[ HN(\mathcal{F}):=(P(\mathcal{F}^{(1)}),P(\mathcal{F}^{(2)}/\mathcal{F}^{(1)}), \cdots , P(\mathcal{F}^{(s)}/\mathcal{F}^{(s-1)})). \]
For each point $\rho : V \otimes \mathcal{O}(-n) \rightarrow \mathcal{F}$ in $Q$ we define the Harder--Narasimhan type of $\rho$ to be the Harder--Narasimhan type of $\mathcal{F}$. \end{defn}

The different types of Harder--Narasimhan filtrations allow us to decompose $Q$ 
into subsets of fixed Harder--Narasimhan type.

\begin{defn}
If $\tau$ is a Harder--Narasimhan type, let $R_\tau \subseteq Q$ be the set of points $\rho : V \otimes \mathcal{O}(-n) \rightarrow \mathcal{F}$ such that $\mathcal{F}$ has Harder--Narasimhan type $\tau$.
Then we can write $Q$ as
\[ Q = \bigsqcup_\tau R_\tau. \]
Let $\tau_0=(P)$ denote the trivial Harder--Narasimhan type; then $R_{\tau_0}$ parameterises semistable sheaves and so is equal to the stratum $S_0$
by  \cite{simpson} Theorem 1.21 (cf. Theorem \ref{simthm}).\end{defn}

For the rest of this section we fix a nontrivial Harder--Narasimhan type $\tau = (P_1, \dots , P_s)$, where $P_1,\ldots,P_s$ are polynomials of degree $e$
such that $P_1 + \cdots + P_s = P$,
and we assume that there is a sheaf of pure dimension $e$ over $W$
with this Harder--Narasimhan type. The following lemma shows that if $n$ is sufficiently large then $R_\tau$ parameterises all sheaves with Harder--Narasimhan type $\tau$.

\begin{lemma}\label{HN bdd}
The set of sheaves of pure dimension $e$ with Hilbert polynomial $P$ and Harder--Narasimhan type $\tau$ is bounded.
\end{lemma}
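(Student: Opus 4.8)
The plan is to prove boundedness by reducing to the boundedness of semistable sheaves, which Simpson has already established. The key observation is that a Harder--Narasimhan type $\tau = (P_1, \dots, P_s)$ prescribes the Hilbert polynomials of the successive quotients $\mathcal{F}^{(i)}/\mathcal{F}^{(i-1)}$, and each such quotient is by definition a \emph{semistable} pure sheaf of dimension $e$ with Hilbert polynomial $P_i$. By Simpson's boundedness result (\cite{simpson}, Theorem 1.1, as quoted in the excerpt), for each fixed $i$ the family of semistable sheaves with Hilbert polynomial $P_i$ is bounded. Since there are only finitely many indices $i$ and all the $P_i$ are fixed by $\tau$, the family of all possible associated graded pieces $\mathrm{Gr}^{HN}(\mathcal{F}) = \bigoplus_{i=1}^s \mathcal{F}^{(i)}/\mathcal{F}^{(i-1)}$ is bounded.

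The main obstacle is passing from boundedness of the associated graded sheaf to boundedness of $\mathcal{F}$ itself, since $\mathcal{F}$ is an iterated extension of the graded pieces and extensions are not controlled by boundedness of their factors alone. The strategy here is to control regularity rather than attempt to parametrize extensions directly. First I would recall that boundedness of a family of sheaves is equivalent to the existence of a single integer $N$ such that every sheaf in the family is $N$-regular (in the sense of Castelnuovo--Mumford regularity). From Simpson's theorem each family of semistable sheaves with Hilbert polynomial $P_i$ admits such a bound $N_i$; set $N = \max_i N_i$, so every Harder--Narasimhan factor is $N$-regular. The task is then to show that $N$-regularity of all the graded pieces forces uniform regularity of $\mathcal{F}$.

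To carry this out I would argue by induction on the length $s$ of the filtration, using the short exact sequences
\[ 0 \to \mathcal{F}^{(s-1)} \to \mathcal{F} \to \mathcal{F}^{(s)}/\mathcal{F}^{(s-1)} \to 0 \]
together with the standard fact that if the two outer terms of a short exact sequence are $N$-regular then so is the middle term (one reads off the required cohomology vanishing $H^i(\mathcal{F}(N-i)) = 0$ for $i>0$ from the long exact sequence in cohomology, since the corresponding groups for the sub and quotient vanish). The inductive hypothesis gives that $\mathcal{F}^{(s-1)}$, being an iterated extension of $N$-regular semistable sheaves, is itself $N$-regular; the quotient $\mathcal{F}^{(s)}/\mathcal{F}^{(s-1)}$ is $N$-regular as a semistable factor; hence $\mathcal{F}$ is $N$-regular. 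Since the bound $N$ depends only on $\tau$ and the fixed polynomials $P_1, \dots, P_s$, and not on the particular sheaf $\mathcal{F}$, the entire family of sheaves of Harder--Narasimhan type $\tau$ is uniformly $N$-regular and therefore bounded, as required.

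Two technical points warrant attention. First, I should confirm that the two-out-of-three property for regularity behaves correctly at the critical cohomological degree; the cleanest formulation is that $m$-regularity is preserved under extensions, which follows directly from the long exact cohomology sequence, so no subtlety arises there. Second, I should note that once uniform $N$-regularity is in hand, boundedness follows because all such sheaves are quotients of a fixed sheaf (a trivial bundle twisted appropriately) with fixed Hilbert polynomial $P$, hence are parametrized by a subscheme of a single quot scheme; this is exactly the mechanism by which $R_\tau$ sits inside $Q$ for $n$ chosen larger than this $N$, which is the use to which the lemma will be put in the subsequent construction.
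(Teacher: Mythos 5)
Your proof is correct, but it takes a genuinely different route from the paper's. The paper's proof is a one-step application of the boundedness criterion in Simpson's Theorem 1.1: a family of pure sheaves of dimension $e$ with Hilbert polynomial $P$ is bounded provided the slopes of all subsheaves are bounded above, and for a sheaf of Harder--Narasimhan type $\tau$ every subsheaf has reduced Hilbert polynomial at most that of the maximal destabilising subsheaf, whose slope $\mu_1$ is determined by $P_1$ alone. You instead use a corollary of that same theorem (boundedness of semistable sheaves with fixed Hilbert polynomial, applied to each factor family with polynomial $P_i$), convert boundedness into uniform Castelnuovo--Mumford regularity, propagate a uniform bound $N = \max_i N_i$ through the Harder--Narasimhan filtration via the extension-closedness of $N$-regularity (correctly verified from the long exact cohomology sequence), and conclude by realising every such $\mathcal{F}$ as a quotient of $\mathcal{O}(-N)^{P(N)}$ parametrised by a single Quot scheme. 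Both arguments are sound; yours is longer but buys something concrete: an explicit uniform regularity bound depending only on $\tau$, which is exactly the statement the paper invokes immediately after the lemma when choosing $n$ so that all sheaves of type $\tau$ (and their Harder--Narasimhan subquotients) are $n$-regular, whereas the paper obtains that as a separate standard consequence of boundedness. One small point of care: the equivalence you quote between boundedness and uniform $N$-regularity requires in general that the set of Hilbert polynomials be finite; in every place you use it the Hilbert polynomial is fixed ($P_i$ or $P$, determined by $\tau$), so no gap arises, but the statement should be phrased with that hypothesis.
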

\begin{proof}
This follows from a result of Simpson (see \cite{simpson} Theorem 1.1) that
 a set of sheaves on $W$ of pure dimension $e$ and Hilbert polynomial $P$ is bounded if the slopes of their subsheaves are bounded above by a fixed constant,
where the slope of a sheaf is (up to multiplication by a positive constant) the second to top coefficient of its reduced Hilbert polynomial.
 Any sheaf $\mathcal{F}$ with Harder--Narasimhan type $\tau$ has a maximal destabilising subsheaf $\mathcal{F}^{(1)}$ with Hilbert polynomial $P_1$, and
all subsheaves of $\mathcal{F}$ have reduced Hilbert polynomial less than or equal to the reduced Hilbert polynomial of $\mathcal{F}^{(1)}$.  Let $\mu_1$ denote the slope of $\mathcal{F}^{(1)}$, which
depends only on the polynomial $P_1$; then any subsheaf $\mathcal{F}'$ of $\mathcal{F}$ has slope less than or equal to $\mu_1$ and this proves the result. 
\end{proof}

This boundedness result means that we may assume $n$ is chosen so that 
all pure sheaves with Hilbert polynomial $P$ and Harder--Narasimhan type $\tau$ are $n$-regular, and therefore are parameterised by $Q$. We may also assume that 
 all  sheaves with Harder--Narasimhan type $(P_{i_1},\ldots,P_{i_k})$ for 
any $1 \leq i_1 < i_2 < \cdots < i_k  \leq s$ are $n$-regular; in
particular the sheaves $\mathcal{F}^{(i)}$ occurring in the Harder--Narasimhan filtration of any sheaf $\mathcal{F}$ of Harder--Narasimhan type $\tau$ are 
 $n$-regular.

We want to show that the subset $R_\tau$ indexed by a fixed Harder--Narasimhan type is contained in a stratum $S_{\beta(\tau)}$ occurring in the stratification $\{S_\beta: \beta \in \mathcal{B}\}$. In order to do this we look for a candidate for $\beta=\beta(\tau)$ depending only on the information coming from the Harder--Narasimhan type $\tau$. The definitions of $Z_\beta$ and $Y_\beta$,
$Z_\beta^{ss}$ and $Y_\beta^{ss}$ are valid for any $\beta \in \mathfrak{t}$
and do not require $\beta$ to belong to the indexing set $\mathcal{B}$, but $Y_\beta^{ss}$ will only be nonempty when $\beta \in \mathcal{B}$. Therefore we can look for a candidate $\beta \in \mathfrak{t}_+$, and if $Y_\beta^{ss}$ is nonempty then this will imply that $\beta $ belongs to $\mathcal{B}$.

We fix a point $\rho : V \otimes \mathcal{O}(-n) \rightarrow \mathcal{F}$ in $R_\tau$ and let 
\[ 0 = \mathcal{F}^{(0)} \subsetneq \mathcal{F}^{(1)} \subsetneq \cdots \subsetneq \mathcal{F}^{(s)}=\mathcal{F} \]
denote the Harder--Narasimhan filtration of $\mathcal{F}$. We want to find  $\beta$ such that $\rho$ belongs to $Y_\beta$, so first we look for a 1-PS $\lambda$ of $G=\mathrm{SL}(V)$ which is adapted to $\rho$ (cf. Remark \ref{KeN}). We have seen that all 1-PSs give rise to filtrations of $\mathcal{F}$ and it is reasonable to expect that a 1-PS adapted to $\rho$ will give rise to the filtration of $\mathcal{F}$ which is most responsible for its instability, namely its Harder--Narasimhan filtration. With this in mind we let \[V^{(i)} :=   H^0(\rho(n))^{-1}(H^0(\mathcal{F}^{(i)}(n)))\] and choose a basis of $V$ 
(and corresponding maximal torus of $G=\mathrm{SL}(V)$) by 
first taking a basis of $V^{(1)}$, then extending to $V^{(2)}$ and so on. 
This gives us a decomposition
\[V = V_1 \oplus \cdots \oplus V_s\]
of $V$ such that $V^{(i)} = V_1 \oplus \cdots \oplus V_i$ and so
$V^{(i)}/V^{(i-1)} \cong V_i$.
Then we consider 1-PSs in $G=\mathrm{SL}(V)$ of the form
\[ \lambda(t) = \left( \begin{array}{cccc} t^{\beta_1}I_{V_{1}} & & & \\ &  t^{\beta_2}I_{V_{2}} & & \\ & & \ddots & \\ & & &  t^{\beta_s}I_{V_{s}} \end{array} \right) \]
where 
$\beta_1, \ldots , \beta_s$ are integers such that $\beta_1 > \cdots >\beta_s$ and $\sum \beta_i P(\mathcal{F}^{(i)}/\mathcal{F}^{(i-1)},n) = 0$. 

\begin{rmk}
Since we are assuming that each $\mathcal{F}^{(i)}$ and $\mathcal{F}^{(i)}/\mathcal{F}^{(i-1)}$ is $n$-regular, we have
 that 
\[P(\mathcal{F}^{(i)}/\mathcal{F}^{(i-1)},n) = \mathrm{dim}V^{(i)}/V^{(i-1)}
= \mathrm{dim}V_i\]
for each $i$.
\end{rmk}

Recall that a non-trivial 1-PS $\lambda$ of $G$ is adapted to $\rho$ if
\[ \frac{ \mu^\mathcal{L}(\rho, \lambda) }{ ||\lambda|| } \]
is minimal among non-trivial 1-PSs of $G$.
Therefore let us choose the integers $(\beta_1, \cdots , \beta_s)$ to minimise the function
\[ f(\beta_1, \cdots \beta_s) :=\frac{\sum_{i=1}^{s-1}(\beta_i - \beta_{i+1})\left(P(\mathcal{F}^{(i)},m) -  P(\mathcal{F}^{(i)},n) \frac{P(\mathcal{F},m)}{P(\mathcal{F},n)} \right)}{(\sum_{i=1}^s \beta^2_iP(\mathcal{F}^{(i)}/\mathcal{F}^{(i-1)},n))^{1/2}} \]
subject to the condition that $g(\beta_1, \cdots \beta_s) := \sum_{i=1}^s \beta_iP(\mathcal{F}^{(i)}/\mathcal{F}^{(i-1)},n) = 0$. We introduce a Lagrangian multiplier $\eta$ and define
\[ \Lambda(\beta_1, \cdots , \beta_s, \eta) :=  f(\beta_1, \cdots \beta_s) - \eta g(\beta_1, \cdots \beta_s); \]
then we look for solutions to 
\begin{equation} \label{label2} \frac{\partial}{\partial \beta_j} \Lambda(\beta_1, \cdots , \beta_s, \eta) = 0 \: \mathrm{for} \: j = 1, \cdots , s \: \mathrm{and} \: \frac{\partial}{\partial \eta} \Lambda(\beta_1, \cdots , \beta_s, \eta) = 0 .\end{equation} 
Note that for any $a \in \mathbb{R}_{>0}$, we have $f(a\beta_1, \cdots , a\beta_s) = f(\beta_1, \cdots , \beta_s)$ and $g(a\beta_1, \cdots ,a\beta_s) = 0$ is equivalent to $g(\beta_1, \cdots , \beta_s) =0$. 
It is easy to check that
\[ (\beta_1, \ldots, \beta_s, \eta) = \left(\frac{P(\mathcal{F},m)}{P(\mathcal{F},n)} - \frac{P_1(m)}{P_1(n)}, \cdots , \frac{P(\mathcal{F},m)}{P(\mathcal{F},n)} - \frac{P_s(m)}{P_s(n)}, 0 \right) \]
provides a solution to the equations (\ref{label2}). Note that
$\beta_1 > \cdots >\beta_s$ where 
\[ \beta_i = \frac{P(\mathcal{F},m)}{P(\mathcal{F},n)} - \frac{P_i(m)}{P_i(n)} \]
because the reduced Hilbert polynomial of $P_i$ is strictly greater than that of $P_{i+1}$. 
Now consider
\begin{equation} \label{defbeta}
\beta =  i \mathrm{diag}(\beta_1, \cdots, \beta_1, \beta_2, \cdots , \beta_2, \cdots , \beta_s \cdots \beta_s)  \in \mathfrak{t}_+\end{equation}
where $\beta_i$ appears $P_i(n)$ times.

\begin{rmk}
This $\beta$ depends on the Harder--Narasimhan type $\tau$ (as well as on $n$
and $m$) and will be written as $\beta = \beta(\tau)$ if it is necessary to 
make this dependence explicit. We note that for two distinct Harder--Narasimhan types $\tau$ and $\tau'$, for all $n$ and $m$ sufficiently large the associated weights $\beta(\tau)$ and $\beta(\tau')$ will also be distinct.
\end{rmk}

Consider the subschemes $Z_\beta$ and $Y_\beta$ of $\overline{Q}$ defined as at (\ref{zedbeta}) and (\ref{ybeta}). 

\begin{lemma}\label{gr object is in Z}
Suppose $n>\!> 0$, then the point $\rho:V \otimes \mathcal{O}(-n) \to \mathcal{F}$ in $R_\tau$
belongs to $Y_\beta$ where $\beta=\beta(\tau)$.
\end{lemma}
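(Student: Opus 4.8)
The plan is to read off membership of $Y_\beta$ from the $T$-weights of the homogeneous coordinates of $\rho$ in the ambient space $\mathbb{P}((\wedge^{P(m)}(V\otimes H))^*)$, using the one-parameter subgroup $\lambda_\beta$ determined by $\beta=\beta(\tau)$. First I would record the general principle that, for a point of a projective space on which a torus acts linearly, the limit of $\lambda_\beta(t)$ applied to it as $t\to 0$ retains precisely the coordinates of least weight in the $\beta$-direction. Consequently $\lim_{t\to 0}\lambda_\beta(t)\cdot\rho$ lies in $Z_\beta$ if and only if this least weight equals the critical value $\|\beta\|^2$, which is exactly the pair of conditions (every nonzero coordinate has weight at least $\|\beta\|^2$, and at least one attains $\|\beta\|^2$) defining $Y_\beta$ in (\ref{ybeta}). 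Thus it suffices to prove that $\lim_{t\to 0}\lambda_\beta(t)\cdot\rho\in Z_\beta$.

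Next I would identify this limit. With the basis of $V$ adapted to the Harder--Narasimhan filtration, as chosen in $\S$\ref{refined strat}, the subgroup $\lambda_\beta$ acts diagonally, and by the argument behind Lemma \ref{hmf} (cf. \cite{huybrechts} Lemma 4.4.3) the limit is the point $\overline{\rho}=\bigoplus_i\rho_i$ associated to the graded object $\mathrm{Gr}^{HN}(\mathcal{F})$. Since $\overline{\rho}$ is fixed by $\lambda_\beta$, proving $\overline{\rho}\in Z_\beta$ reduces to showing that all the nonzero Pl\"ucker coordinates of $\overline{\rho}$ lie in a single weight space $\alpha$ with $\alpha\cdot\beta=\|\beta\|^2$.

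For this I would use the regularity arranged earlier: for $m\gg n\gg 0$ each $\mathcal{F}^{(i)}$ is $n$-regular, so $H^0(\rho(m))(V^{(i)}\otimes H)=H^0(\mathcal{F}^{(i)}(m))$ has dimension $P(\mathcal{F}^{(i)},m)$ and $\dim V^{(i)}=P(\mathcal{F}^{(i)},n)$. The nonzero coordinates of $\overline{\rho}$ correspond to weight-basis subsets $B$ of $V\otimes H$ containing exactly $P_i(m)$ vectors from $V_i\otimes H$ for each $i$; every such $B$ has the same pairing $-\sum_i\beta_i P_i(m)$ with $\beta$, so $\overline{\rho}$ indeed occupies one weight space. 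It then remains to verify the numerical identity $-\sum_i\beta_i P_i(m)=\|\beta\|^2$. Substituting $\beta_i=P(\mathcal{F},m)/P(\mathcal{F},n)-P_i(m)/P_i(n)$ and $\|\beta\|^2=\sum_i\beta_i^2 P_i(n)$ and expanding, both sides collapse, using $\sum_i P_i(n)=P(\mathcal{F},n)$ and $\sum_i P_i(m)=P(\mathcal{F},m)$, to the common value $\sum_i P_i(m)^2/P_i(n)-P(\mathcal{F},m)^2/P(\mathcal{F},n)$. A self-contained alternative for the two defining conditions of $Y_\beta$ is to exhibit one such $B$ with $\rho$-coordinate nonzero (giving equality) and to bound a general nonzero coordinate by Abel summation from the inequalities $\sum_{j\le i}|B\cap(V_j\otimes H)|\le P(\mathcal{F}^{(i)},m)$.

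The step I expect to be the main obstacle is this last matching of weights. The weight of $\overline{\rho}$ is produced from the values $P_i(m)$ at $m$, whereas $\|\beta\|^2$ is built from the values $P_i(n)$ at $n$, and the point is that the particular $\beta=\beta(\tau)$ -- the solution of the Lagrange-multiplier problem of $\S$\ref{refined strat} -- is exactly the weight for which these two agree, so that $\overline{\rho}$ lands on the critical hyperplane $\mu_\beta=\|\beta\|^2$ and not at some other level. Carrying the moment-map and inner-product normalisations through the identification of the relevant dimensions with $P(\mathcal{F}^{(i)},m)$ and $P(\mathcal{F}^{(i)},n)$, and confirming the sign forced by passing to the dual space and by the tracefree normalisation of $\beta$, is where the care lies; once the identity holds, $\overline{\rho}\in Z_\beta$ and hence $\rho\in Y_\beta$ follow at once.
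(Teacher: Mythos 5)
Your proposal is correct and follows essentially the same route as the paper's proof: reduce membership in $Y_\beta$ to the limit $\lim_{t\to 0}\lambda_\beta(t)\cdot\rho$ lying in $Z_\beta$, identify that limit with the Harder--Narasimhan graded object $\overline{\rho}$ via \cite{huybrechts} Lemma 4.4.3, and verify that its weight in the $\beta$-direction equals the critical value, with both $-\sum_i\beta_i P_i(m)$ and $\|\beta\|^2$ collapsing to $\sum_i P_i(m)^2/P_i(n)-P(m)^2/P(n)$. The only cosmetic difference is that you unwind the Pl\"ucker-coordinate weight computation by hand where the paper simply quotes its Lemma \ref{hmf} (whose proof is exactly that computation), writing $-\mu^{\mathcal{L}}(\rho,\lambda_\beta)=\|\lambda_\beta\|^2=\|\beta\|^2$.
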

\begin{proof} Our assumptions on $n$ imply that $\mathcal{F}$ and all the subquotients appearing in its Harder--Narasimhan filtration are $n$-regular.
The point $\rho$ belongs to $Y_\beta$ if and only if the limit point 
\[ \lim_{t \rightarrow 0} \lambda_{\beta}(t) \cdot \rho = \overline{\rho} \]
of its path of steepest descent under the function $\mu \cdot \beta$ belongs to $Z_\beta$. By \cite{huybrechts} Lemma 4.4.3
this limit point is 
\[ \overline{\rho} : V \otimes \mathcal{O}(-n) \rightarrow \mathrm{Gr}^{HN}(\mathcal{F}) = \bigoplus_{i=1}^s \mathcal{F}^{(i)}/ \mathcal{F}^{(i-1)} .\]
The weight of $ \lambda_{\beta}$ acting on a point lying over $\overline{\rho}$ is given by 
\[-\mu^{\mathcal{L}}(\rho, \lambda_{\beta}) = \sum \frac{P_i(m)^2}{P_i(n)}-\frac{P(m)^2}{P(n)} \]
which is equal to $||\lambda_{\beta}||^2 = ||\beta||^2$, and so $\overline{\rho} \in Z_\beta$ as required. 
\end{proof}

Recall that we have a decomposition $V=V_1 \oplus \cdots \oplus V_s$ into weight spaces for the 1-PS $\lambda_{\beta}$. 

\begin{lemma} \label{zedblemma}
Let $\rho:V \otimes \mathcal{O}(-n) \to \mathcal{F}$ be a point in $\overline{Q}$. Then $\rho$ is fixed by the 1-PS $\lambda_{\beta}$ if and only if $\mathcal{F}$ has 
a decomposition 
\[\mathcal{F} = \mathcal{F}_1 \oplus \cdots \oplus \mathcal{F}_s\]
and we also have a decomposition
\[\rho=\rho_1 \oplus \cdots \oplus \rho_s\]
where $\rho_i:V_i \otimes \mathcal{O}(-n) \to \mathcal{F}_i$ lies in the quot scheme $\mathrm{Quot}(V_i \otimes \mathcal{O}(-n),P(\mathcal{F}_i))$.
\end{lemma}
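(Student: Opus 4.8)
The plan is to reduce the statement to a property of the kernel subsheaf $K := \ker \rho \subseteq V \otimes \mathcal{O}(-n)$. A point of the quot scheme is determined by its kernel, and the $G$-action is $g \cdot \rho = \rho \circ (g^{-1} \otimes \mathrm{id}_{\mathcal{O}(-n)})$, so $g \cdot \rho$ has kernel $(g \otimes \mathrm{id}) \cdot K$. Hence $\rho$ is fixed by $\lambda_\beta$ if and only if $(\lambda_\beta(t) \otimes \mathrm{id}) \cdot K = K$ for every $t \in \mathbb{C}^*$; that is, $K$ is invariant under the $\mathbb{C}^*$-action induced by $\lambda_\beta$ on $V \otimes \mathcal{O}(-n)$, which acts as $t^{\beta_i}$ on the summand $V_i \otimes \mathcal{O}(-n)$ and trivially on $\mathcal{O}(-n)$ and on $W$. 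I would record this equivalence first, since it converts a statement about fixed points of a group action on a moduli scheme into a statement about invariant subsheaves.

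The ``if'' direction is then immediate. Given the asserted decompositions, $K = \bigoplus_{i=1}^s \ker \rho_i$ with $\ker \rho_i \subseteq V_i \otimes \mathcal{O}(-n)$, and each summand lies in a single $\lambda_\beta$-weight space, so $K$ is visibly invariant and $\rho$ is fixed. Equivalently, $\lambda_\beta(t)$ acts on the factor $\rho_i$ by the scalar $t^{\beta_i}$, which is an automorphism of $\mathcal{F}_i$ and therefore leaves the corresponding point of the quot scheme unchanged.

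For the ``only if'' direction I would show that any $\lambda_\beta$-invariant subsheaf of $V \otimes \mathcal{O}(-n) = \bigoplus_i V_i \otimes \mathcal{O}(-n)$ is graded, i.e. decomposes as the direct sum of its intersections with the weight summands. For a local section $s = \sum_i s_i$ of $K$ with $s_i \in V_i \otimes \mathcal{O}(-n)$, invariance gives $\sum_i t^{\beta_i} s_i \in K$ for all $t$; since the integers $\beta_1 > \cdots > \beta_s$ are distinct, evaluating at finitely many distinct values of $t$ and inverting the resulting (generalised) Vandermonde system forces each $s_i \in K$. Hence $K = \bigoplus_i K_i$ with $K_i := K \cap (V_i \otimes \mathcal{O}(-n))$. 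Passing to quotients gives $\mathcal{F} = (V \otimes \mathcal{O}(-n))/K = \bigoplus_i (V_i \otimes \mathcal{O}(-n))/K_i =: \bigoplus_i \mathcal{F}_i$ and correspondingly $\rho = \bigoplus_i \rho_i$ with $\rho_i : V_i \otimes \mathcal{O}(-n) \to \mathcal{F}_i$; taking $P(\mathcal{F}_i)$ to be the Hilbert polynomial of $\mathcal{F}_i$ places $\rho_i$ in $\mathrm{Quot}(V_i \otimes \mathcal{O}(-n), P(\mathcal{F}_i))$, as required.

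The main obstacle is the grading step. One must make precise that ``$\rho$ fixed'' means the kernel subsheaf is genuinely invariant under the $\mathbb{C}^*$-action, rather than merely sent to an isomorphic quotient by a varying automorphism of $\mathcal{F}$; this is exactly what the kernel reformulation in the first paragraph secures. The remaining content is that invariance of a subsheaf under a diagonalisable $\mathbb{C}^*$-action forces it to respect the weight decomposition, and the interpolation argument goes through sheaf-theoretically precisely because the action is trivial along $W$ and the weights occurring are finite in number and pairwise distinct.
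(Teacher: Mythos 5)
Your argument is correct, and it is worth noting that the paper itself states this lemma without any proof, treating it as standard (in the spirit of Huybrechts--Lehn, \S 4.4); your kernel-based route --- a point of the quot scheme is determined by its kernel subsheaf, the action satisfies $\ker(g\cdot\rho) = (g\otimes\mathrm{id})(\ker\rho)$, so fixedness is exactly $\lambda_\beta$-invariance of $K$, and invariance of a subsheaf under a diagonalised $\mathbb{C}^*$-action forces it to be graded --- is precisely the standard argument the authors are implicitly invoking. One small repair: for arbitrary distinct values $t_j \in \mathbb{C}^*$ the generalised Vandermonde matrix $(t_j^{w_i})$ need not be invertible when the weights are not consecutive (for instance weights $1, -1$ evaluated at $t = \pm 1$ give a singular matrix), so you should either choose the $t_j$ to be distinct \emph{positive reals}, for which the generalised Vandermonde determinant is nonzero, or simply invoke linear independence of the characters $t \mapsto t^{w_i}$ on $\mathbb{C}^*$; either fix is immediate. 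A second cosmetic point: the weights of $\lambda_\beta$ on the $V_i$ are integers proportional to the rational numbers $\beta_i$ rather than the $\beta_i$ themselves, but all your argument uses is that they are pairwise distinct, which holds since $\beta_1 > \cdots > \beta_s$.
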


The fixed point locus of $\lambda_\beta(\mathbb{C}^*)$ acting on $\overline{Q}$ decomposes into components indexed by the tuple of Hilbert polynomials of the direct summands. Let 
$Q_i:=\mathrm{Quot}(V_i \otimes \mathcal{O}(-n),P_i)$ and consider 
\[ F = \left\{ q \in \mathrm{Quot}(V \otimes \mathcal{O}(-n),P) : q = \oplus_{i=1}^s q_i \: \mathrm{such \: that} \: q_i \in Q_i \right\} \cong Q_1 \times \cdots \times Q_s. \]

\begin{cor}\label{cor on conn comp}
The scheme $F \cap \overline{Q}$ is a union of connected components of $Z_\beta$.
\end{cor}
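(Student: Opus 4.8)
The plan is to exhibit $F \cap \overline{Q}$ as an open-and-closed subset of $Z_\beta$, which is the same thing as a union of connected components. Two things must be established: that $F \cap \overline{Q}$ is contained in $Z_\beta$, and that it is open and closed in the fixed-point locus $\overline{Q}^{\lambda_\beta(\mathbb{C}^*)}$ of which $Z_\beta$ is itself a union of components.

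First I would recall the algebraic description of $Z_\beta$. A point of $\overline{Q}$ fixed by $\lambda_\beta(\mathbb{C}^*)$ lies over a single weight space of the ambient representation, and by (\ref{zedbeta}) it belongs to $Z_\beta$ precisely when the weight with which $\lambda_\beta$ acts on a nonzero lift of the point to the affine cone equals $||\beta||^2$ (equivalently $-\mu^{\mathcal{L}}(\rho,\lambda_\beta) = ||\beta||^2$ in the notation of Lemma \ref{gr object is in Z}). Thus $Z_\beta$ is exactly the union of those connected components of $\overline{Q}^{\lambda_\beta(\mathbb{C}^*)}$ on which this weight takes the value $||\beta||^2$, and in particular $Z_\beta$ is open and closed in $\overline{Q}^{\lambda_\beta(\mathbb{C}^*)}$.

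Next I would use Lemma \ref{zedblemma}: every $\rho \in \overline{Q}^{\lambda_\beta(\mathbb{C}^*)}$ decomposes as $\rho = \rho_1 \oplus \cdots \oplus \rho_s$ along the fixed weight-space decomposition $V = V_1 \oplus \cdots \oplus V_s$, with $\rho_i \in \mathrm{Quot}(V_i \otimes \mathcal{O}(-n), P(\mathcal{F}_i))$. Since the universal quotient sheaf restricts to a flat family over $\overline{Q}^{\lambda_\beta(\mathbb{C}^*)}$ and the summands $V_i$ are fixed, the tuple $(P(\mathcal{F}_1), \dots, P(\mathcal{F}_s))$ of Hilbert polynomials of the $\mathcal{F}_i$ is locally constant. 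Hence $\overline{Q}^{\lambda_\beta(\mathbb{C}^*)}$ is the disjoint union, over such tuples, of open-and-closed subschemes, and $F \cap \overline{Q}$ is precisely the piece indexed by $\tau = (P_1, \dots, P_s)$; in particular $F \cap \overline{Q}$ is open and closed in $\overline{Q}^{\lambda_\beta(\mathbb{C}^*)}$.

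It then remains to check that the defining weight of $Z_\beta$ is attained on $F$. For $\rho \in F \cap \overline{Q}$ we have $\overline{\rho} = \rho$, so Lemma \ref{hmf} gives $\mu^{\mathcal{L}}(\rho, \lambda_\beta) = \sum_i \beta_i P_i(m)$; substituting $\beta_i = P(m)/P(n) - P_i(m)/P_i(n)$ and using $\sum_i P_i = P$ yields $-\mu^{\mathcal{L}}(\rho, \lambda_\beta) = \sum_i P_i(m)^2/P_i(n) - P(m)^2/P(n) = ||\beta||^2$, exactly the computation carried out in the proof of Lemma \ref{gr object is in Z}. Therefore $F \cap \overline{Q} \subseteq Z_\beta$, and combining this inclusion with the fact that both $F \cap \overline{Q}$ and $Z_\beta$ are open and closed in $\overline{Q}^{\lambda_\beta(\mathbb{C}^*)}$ shows that $F \cap \overline{Q}$ is open and closed in $Z_\beta$, hence a union of its connected components. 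I expect the only delicate point to be the justification that the Hilbert-polynomial tuple is locally constant, i.e. the flatness of the restricted universal sheaf over the fixed locus; the weight computation is routine given Lemma \ref{gr object is in Z}, and the rest is formal.
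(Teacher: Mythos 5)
Your proof is correct and takes essentially the same route as the paper's: both identify $Z_\beta$ as the union of those connected components of the $\lambda_\beta(\mathbb{C}^*)$-fixed locus in $\overline{Q}$ on which the weight equals $||\beta||^2$, and both verify by the identical direct computation (as in Lemma \ref{gr object is in Z}) that $-\mu^{\mathcal{L}}(\rho,\lambda_\beta) = \sum_i P_i(m)^2/P_i(n) - P(m)^2/P(n) = ||\beta||^2$ on $F \cap \overline{Q}$. The only difference is that you supply a justification (flatness of the restricted universal family, hence local constancy of the tuple of Hilbert polynomials) for the step the paper dispatches with ``Clearly $F$ is a union of connected components of the fixed point locus'', which is a legitimate and correctly argued filling-in rather than a departure.
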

\begin{proof}
Clearly $F$ is a union of connected components of the fixed point locus of the one-parameter subgroup $\lambda_{\beta}$. By definition $Z_\beta$ is the connected components of the fixed point locus in $\overline{Q}$ on which $\lambda_{\beta}$ acts with weight $|| \beta||^2$. Let $q = \oplus_{i=1}^s q_i$ be a point in $F$ where $q_i : V_i \otimes \mathcal{O}(-n) \rightarrow \mathcal{E}_i$ is a quotient sheaf in $Q_i$. The Hilbert-Mumford function $\mu^{\mathcal{L}}(q, \lambda_\beta)$ is equal to minus the weight of he action of $\lambda_\beta$ on the fibre of $\mathcal{L}$ over $q$. By direct calculation we have
\[ || \beta||^2 = \sum_{i=1}^s \beta_i^2 P_i(n)= \sum_{i=1}^s \frac{P_i(m)^2}{P_i(n)} - \frac{P(m)^2}{P(n)} \]
and
\[\mu^{\mathcal{L}}(q, \lambda_\beta) = \sum_{i=1}^s \beta_i P(\mathcal{E}_i,m) = \sum_{i=1}^s \beta_i P_i(m) =  \frac{P(m)^2}{P(n)} - \sum_{i=1}^s \frac{P_i(m)^2}{P_i(n)} \]
so that $F \cap \overline{Q}$ is a union of connected components of $Z_\beta$.
\end{proof}

\begin{rmk} \label{components}
Recall from Remark \ref{disconn} that from the decomposition
$Z_\beta = \sqcup Z_{(\tau')}$ into disjoint closed subsets
we get similar decompositions $Y_\beta = \sqcup Y_{(\tau')}$
and $Y_\beta^{ss} = \sqcup Y_{(\tau')}^{ss}$ and
\[S_\beta = \sqcup GY_{(\tau')}^{ss} \cong  \sqcup G \times_{P_\beta} Y_{(\tau')}^{ss}  \]
where $Y_{(\tau')} = p_\beta^{-1}(Z_{(\tau')}) \subseteq Y_\beta$
and $Y_{(\tau')}^{ss} = p_\beta^{-1}(Z_{(\tau')}^{ss})$. Thus $GY_{(\tau)}^{ss}
\cong  \sqcup G \times_{P_\beta} Y_{(\tau')}^{ss} $
is a union of connected components of $S_\beta$. 
\end{rmk}

We want to show that $\rho$  belongs to $Y_\beta^{ss}$, which is equivalent to showing that $\overline{\rho} \in Z_\beta^{ss}$. Recall that the subscheme $Z_\beta$ is invariant under the subgroup of $\mathrm{SL}(V)$ which stabilises $\beta$,
\[ \mathrm{Stab} \beta = \left( \prod_{i=1}^s \mathrm{GL}(V_i ) \right) \cap \mathrm{SL}(V). \]
The original linearisation $\mathcal{L}$ restricts to a $\mathrm{Stab} \beta$ linearisation on $Z_\beta$ which we also denote by $\mathcal{L}$. Associated to $-\beta$ is a character
\[ \chi_{-\beta} : \mathrm{Stab} \beta \rightarrow \mathbb{C}^* \]
\[ (g_1, \cdots g_s) \mapsto \Pi_{i=1}^s \mathrm{det}g_i^{-\beta_i} \]
which we can use to twist the linearisation $\mathcal{L}$; we let $\mathcal{L}^{\chi_{-\beta}}$ denote this twisted linearisation on $Z_\beta$. By definition
\[ Z_\beta^{ss} := Z_\beta^{\mathrm{Stab} \beta -ss}(\mathcal{L}^{\chi_{-\beta}}) \]
is the open subscheme of $Z_\beta$ whose geometric points are semistable for this $\mathrm{Stab} \beta$ action.

Note that 
the centre of $\mathrm{Stab} \beta$ is 
\[ Z(\mathrm{Stab} \beta) = \{ (t_1, \ldots , t_s) \in (\mathbb{C}^* )^{s}
: \prod_{i=1}^s t_i^{P_i(n)} =1 \}.\]
Consider the subgroup
\[G' = \prod_{i=1}^s \mathrm{SL}(V_i )\]
of $\mathrm{Stab} \beta$.

\begin{lemma}\label{lins equal} There is an isomorphism
$\mathrm{Stab}{\beta} \cong (G' \times Z(\mathrm{Stab} \beta) 
 )/ (\prod_{i=1}^s \mathbb{Z} / P_i(n) \mathbb{Z})$.
Furthermore, the semistable subscheme $F^{ss} := F^{\mathrm{Stab} \beta -ss}(\mathcal{L}^{\chi_{-\beta}})$ for the $\mathrm{Stab} \beta$ action on $F$ with respect to $\mathcal{L}^{\chi_{-\beta}}$ is equal to the semistable subset for the $G'$-action on $F$ with respect to $\mathcal{L}$.
\end{lemma}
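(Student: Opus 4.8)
The plan is to treat the two assertions separately, the group-theoretic isomorphism first and the comparison of semistable loci second. For the isomorphism I would write down the multiplication map
\[ \phi : G' \times Z(\mathrm{Stab} \beta) \longrightarrow \mathrm{Stab} \beta, \qquad ((h_1, \ldots, h_s),(t_1, \ldots, t_s)) \longmapsto (t_1 h_1, \ldots, t_s h_s), \]
and check three things. It lands in $\mathrm{Stab} \beta$ because $\det(t_i h_i) = t_i^{P_i(n)}$ (as $\dim V_i = P_i(n)$ and $\det h_i = 1$), whose product over $i$ is $1$ by the defining relation of $Z(\mathrm{Stab} \beta)$. It is surjective: given $(g_1, \ldots, g_s) \in \mathrm{Stab} \beta$, choose $t_i$ with $t_i^{P_i(n)} = \det g_i$ and set $h_i = t_i^{-1} g_i \in \mathrm{SL}(V_i)$, where $\prod_i \det g_i = 1$ gives $\prod_i t_i^{P_i(n)} = 1$. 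Finally its kernel consists of the tuples with $t_i h_i = I_{V_i}$, i.e. $h_i = t_i^{-1} I_{V_i}$ with $t_i^{P_i(n)} = 1$, so $\ker \phi \cong \prod_i \mathbb{Z}/P_i(n)\mathbb{Z}$. This yields the claimed isomorphism.

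For the second assertion I would test both semistability conditions by the Hilbert--Mumford criterion and show the two tests coincide. Two observations set things up. First, $\chi_{-\beta}$ restricts trivially to $G' = \prod_i \mathrm{SL}(V_i)$, since $\det h_i = 1$ there, so twisting by $\chi_{-\beta}$ does not affect the Hilbert--Mumford function in $G'$-directions. Second, every point $q = \oplus_i q_i$ of $F$ is fixed by the whole central torus $Z(\mathrm{Stab} \beta)$, because rescaling the source $V_i$ of a quotient leaves the point of the quot scheme unchanged. Using the isogeny $\phi$, whose induced map on cocharacter lattices is injective with finite cokernel, every 1-PS of $\mathrm{Stab} \beta$ has a positive power of the form $\lambda' \lambda_c$ with $\lambda'$ a 1-PS lying in the image of $G'$ and $\lambda_c$ a central 1-PS; since the Hilbert--Mumford function scales under powers, it suffices to evaluate on such commuting products.

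The heart of the argument is an additivity computation for the pair $(\lambda', \lambda_c)$. Because $\lambda_c$ fixes $q$, the limit $q_0 = \lim_{t \to 0} \lambda'(t) \cdot q$ is also the limit of $\lambda'\lambda_c$ on $q$, and commutativity shows that the entire $\lambda'$-flow from $q$ to $q_0$ is pointwise fixed by $\lambda_c$; hence $q$ and $q_0$ lie in one connected component of $\mathrm{Fix}(\lambda_c)$, the weight of $\lambda_c$ on $\mathcal{L}$ is constant along it, and the weights of the commuting pair on $\mathcal{L}_{q_0}$ add:
\[ \mu^{\mathcal{L}}(q, \lambda' \lambda_c) = \mu^{\mathcal{L}}(q, \lambda') + \mu^{\mathcal{L}}(q, \lambda_c). \]
The twist contribution is additive and vanishes on $\lambda'$, so $\mu^{\mathcal{L}^{\chi_{-\beta}}}(q, \lambda' \lambda_c) = \mu^{\mathcal{L}}(q, \lambda') + \mu^{\mathcal{L}^{\chi_{-\beta}}}(q, \lambda_c)$. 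A direct calculation parallel to Corollary \ref{cor on conn comp} --- using the explicit values $\beta_i = P(\mathcal{F},m)/P(\mathcal{F},n) - P_i(m)/P_i(n)$ together with the relation $\sum_i a_i P_i(n) = 0$ defining the cocharacters of $Z(\mathrm{Stab} \beta)$ --- shows that the twist by $\chi_{-\beta}$ exactly cancels the $\mathcal{L}$-weight of every central 1-PS, i.e. $\mu^{\mathcal{L}^{\chi_{-\beta}}}(q, \lambda_c) = 0$. Therefore $\mu^{\mathcal{L}^{\chi_{-\beta}}}(q, \lambda' \lambda_c) = \mu^{\mathcal{L}}(q, \lambda')$, and nonnegativity for all 1-PSs of $\mathrm{Stab} \beta$ with respect to $\mathcal{L}^{\chi_{-\beta}}$ is equivalent to nonnegativity for all 1-PSs of $G'$ with respect to $\mathcal{L}$; that is, $F^{ss} = F^{G' -ss}(\mathcal{L})$.

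I expect the main obstacle to be this additivity step, which is where the hypotheses genuinely enter: it works only because $\lambda_c$ fixes $q$ (guaranteed by $F \subseteq \mathrm{Fix}(Z(\mathrm{Stab} \beta))$), so that the limit point and the connecting flow lie in a single fixed component and the weight of $\lambda_c$ is constant along it. The complementary delicate point is verifying that the twist annihilates the central weight precisely; here the specific rational weight $\beta$ and the defining relation of $Z(\mathrm{Stab} \beta)$ do all the work, and one should also note, via the reduction to rational linearisations recorded earlier, that one may take the $\beta_i$ integral so that $\chi_{-\beta}$ is genuinely a character.
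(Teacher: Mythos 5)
Your proof is correct, and it takes a mildly different route from the paper's on the second assertion. The first half (the surjection $\phi : G' \times Z(\mathrm{Stab}\,\beta) \to \mathrm{Stab}\,\beta$ with kernel $\prod_{i=1}^s \mathbb{Z}/P_i(n)\mathbb{Z}$) coincides with the paper's argument, which writes down the same map and notes that finite groups are invisible to GIT semistability. For the second half you arrive at the same two key facts as the paper --- every point of $F$ is fixed by $Z(\mathrm{Stab}\,\beta)$, and the twist by $\chi_{-\beta}$ exactly cancels the central weight on the fibres (your cancellation $\sum_i a_i \beta_i P_i(n) = -\sum_i a_i P_i(m)$, using $\sum_i a_i P_i(n)=0$ and the explicit $\beta_i$, is precisely the paper's identity $\chi(t_1,\dots,t_s)=\prod_i t_i^{-\beta_i P_i(n)}$ rewritten in cocharacter language) --- but you then conclude via the Hilbert--Mumford criterion, lifting a positive power of each 1-PS of $\mathrm{Stab}\,\beta$ through the isogeny to a commuting product $\lambda'\lambda_c$ and proving additivity of the Hilbert--Mumford function at points of $F$. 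The paper short-circuits this: once $Z(\mathrm{Stab}\,\beta)$ acts trivially both on $F$ and on the fibres of $\mathcal{L}^{\chi_{-\beta}}|_F$, the linearised action of $G' \times Z(\mathrm{Stab}\,\beta)$ on $(F,\mathcal{L}^{\chi_{-\beta}})$ factors through $G'$ (and $\chi_{-\beta}$ restricts trivially to $G'$ since $\det=1$ on each $\mathrm{SL}(V_i)$), so the semistable sets coincide immediately --- at the level of invariant sections --- with no need for an additivity lemma or the power-lifting step. Your extra machinery is sound: the additivity argument is carefully justified (same limit point $q_0$, the $\lambda'$-flow is contained in $\mathrm{Fix}(\lambda_c)$, and the central weight is locally constant on the fixed locus), and invoking the Hilbert--Mumford criterion is legitimate since $F \cong Q_1 \times \cdots \times Q_s$ is projective and $\mathcal{L}|_F$ is ample. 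What it buys is an explicit accounting of exactly where each hypothesis enters, at the cost of length; your closing remark that the $\beta_i$ may be taken integral by passing to a rational multiple, so that $\chi_{-\beta}$ is a genuine character, makes explicit a point the paper leaves implicit via its convention on rational linearisations.
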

\begin{proof}
The stabiliser of $\beta$ is
\[ \mathrm{Stab} \beta = \left( \prod_{i=1}^s \mathrm{GL}(V_i ) \right) \cap \mathrm{SL}(V) \] and there is a surjection 
\[ G' \times Z(\mathrm{Stab} \beta) \to \mathrm{Stab} \beta \]
\[ ((g_1', \dots, g_m'),  (t_1, \dots , t_s) ) \mapsto (t_1g_1', \dots, t_sg_s') \]
with kernel $\prod_{i=1}^s \mathbb{Z} / P_i(n) \mathbb{Z}$. Hence $\mathrm{Stab} \beta$ is the quotient of the 
 product $G' \times Z(\mathrm{Stab} \beta)$ by this product of the finite cyclic groups of order $P_i(n)$. However finite groups do not make any difference to GIT semistability,
so we can just consider the action of $G' \times Z(\mathrm{Stab} \beta)$. 

The centre $Z(\mathrm{Stab} \beta)$ fixes each point $q = \oplus q_i$ in $F$ and acts on the fibre of $\mathcal{L}$ at $q$ as multiplication by a character $\chi$. Since $q_i$ is multiplied by $t_i^{-1}$, $\mathrm{det}H^0(q_i(m))$ is multiplied by $t_i^{-P_i(m)}$, and we find that
$\chi(t_1, \cdots t_s)= \prod_{i=1}^s t_i^{P_i(m)}$. Since $\prod t_i^{P_i(n)} =1$ we may rewrite this as \[ \chi(t_1, \cdots t_s)= \prod_{i=1}^s t_i^{-\left(P_i(n) \frac{P(n)}{P(m)}-P_i(m)\right)}= \prod_{i=1}^s t_i^{-\beta_iP_i(n)}. \]
The centre acts on $\mathcal{L}_q$ via the character $\chi_{-\beta}$ and so it acts trivially on the fibre over the modified linearisation $\mathcal{L}^{\chi_{-\beta}}$. In particular, the semistable set for the action of $\mathrm{Stab} \beta = G' Z(\mathrm{Stab} \beta)$ with respect to $\mathcal{L}^{\chi_{-\beta}}$ is equal to the semistable set for the $G'$ action with respect to $\mathcal{L}$.
\end{proof}

Recall the following standard result:

\begin{lemma}\label{lemma GIT 2}
Let $X_1, \cdots , X_k$ be complex projective schemes and suppose $G_i$ is a reductive group acting on $X_i$ for $1 \leq i \leq k$. Let $\mathcal{L}_i$ be an ample linearisation of the $G_i$ action on $X_i$. Then
\[ (\prod_{i=1}^k X_i)^{\prod G_i -ss} (\bigotimes_{i=1}^k \pi_i^*\mathcal{L}_i)  = \prod_{i=1}^k X_i^{G_i - ss } (\mathcal{L}_i) \]
where $\pi_j : \prod_{i=1}^k X_i \rightarrow X_j $ is the projection map.
\end{lemma}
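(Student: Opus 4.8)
The plan is to prove the product formula for GIT semistability using the Hilbert--Mumford numerical criterion together with the fact that every one-parameter subgroup of a product $\prod_i G_i$ decomposes as a product of one-parameter subgroups. First I would recall the setup: the product linearisation $\mathcal{L} = \bigotimes_i \pi_i^* \mathcal{L}_i$ gives a $\prod_i G_i$-equivariant embedding of $\prod_i X_i$ into a projective space, and since each $\mathcal{L}_i$ is ample and each $G_i$ is reductive, classical GIT applies to the product action. The key observation is that a point $x = (x_1,\dots,x_k)$ is $\prod_i G_i$-semistable if and only if the Hilbert--Mumford function $\mu^{\mathcal{L}}(x,\lambda)$ is non-negative for every one-parameter subgroup $\lambda$ of $\prod_i G_i$, and I would exploit the additivity of the Hilbert--Mumford function across the tensor product of linearisations.

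The central computation is that any one-parameter subgroup $\lambda$ of $\prod_i G_i$ is of the form $\lambda = (\lambda_1,\dots,\lambda_k)$ with each $\lambda_i$ a one-parameter subgroup of $G_i$, and that under the product linearisation
\[ \mu^{\mathcal{L}}(x,\lambda) = \sum_{i=1}^k \mu^{\mathcal{L}_i}(x_i,\lambda_i). \]
This follows because the limit $\lim_{t\to 0}\lambda(t)\cdot x$ is taken coordinatewise and the weight of the $\mathbb{C}^*$-action on the fibre of a tensor product of pulled-back line bundles is the sum of the weights on each factor. Once this additivity is established, the forward inclusion is immediate: if each $x_i$ is $G_i$-semistable then each summand $\mu^{\mathcal{L}_i}(x_i,\lambda_i)\geq 0$, so the total is non-negative for every $\lambda$, giving $\prod_i G_i$-semistability of $x$.

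For the reverse inclusion I would argue contrapositively. Suppose some $x_j$ fails $G_j$-semistability, so there is a one-parameter subgroup $\lambda_j$ of $G_j$ with $\mu^{\mathcal{L}_j}(x_j,\lambda_j) < 0$. Taking $\lambda_i$ to be the trivial one-parameter subgroup for all $i \neq j$ (for which $\mu^{\mathcal{L}_i}(x_i,\lambda_i) = 0$), the additivity formula yields $\mu^{\mathcal{L}}(x,\lambda) = \mu^{\mathcal{L}_j}(x_j,\lambda_j) < 0$, so $x$ is not $\prod_i G_i$-semistable. This shows the complement of the product of semistable sets is contained in the complement of the product semistable set, completing the set-theoretic equality. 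The equality of scheme structures then follows since both sides are open subschemes defined by the non-vanishing of the same invariant sections, or one can simply cite that semistable loci are determined as sets by the numerical criterion and carry their natural open subscheme structure.

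The main obstacle I expect is the careful verification of the additivity formula for the Hilbert--Mumford function, rather than either inclusion. One must be precise about how the weight decomposes: the fibre of $\bigotimes_i \pi_i^*\mathcal{L}_i$ over the limit point $\lim_{t\to 0}\lambda(t)\cdot x = (\lim_{t\to 0}\lambda_1(t)\cdot x_1,\dots)$ is the tensor product of the fibres of the individual $\mathcal{L}_i$ over the component limit points, and the induced $\mathbb{C}^*$-weight is genuinely additive only because the one-parameter subgroup acts diagonally with respect to this product decomposition. Granting this—which is a standard property of the Hilbert--Mumford function under external tensor products—the remainder of the argument is the routine numerical bookkeeping sketched above.
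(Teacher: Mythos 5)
Your proof is correct, but there is nothing in the paper to compare it with: the authors state this lemma as a standard result and give no proof at all, so your argument stands on its own. The Hilbert--Mumford route you take is sound: every one-parameter subgroup of $\prod_i G_i$ is indeed of the form $(\lambda_1,\dots,\lambda_k)$ with each $\lambda_i$ a (possibly trivial) one-parameter subgroup of $G_i$, limits are taken coordinatewise, and since the linearisation on $\bigotimes_i \pi_i^*\mathcal{L}_i$ acts factorwise on the fibre over the limit point, the additivity $\mu^{\mathcal{L}}(x,\lambda)=\sum_{i=1}^k \mu^{\mathcal{L}_i}(x_i,\lambda_i)$ holds, after which both inclusions follow exactly as you say (inserting the trivial one-parameter subgroup in the factors $i\neq j$ for the contrapositive). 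Two small points deserve explicit mention. First, to invoke the numerical criterion on the product you need $\bigotimes_i\pi_i^*\mathcal{L}_i$ to be ample; this is true because the external tensor product of ample line bundles on projective schemes is ample (via the Segre embedding), but you use it tacitly. Second, your closing remark that both sides are ``defined by the non-vanishing of the same invariant sections'' is loose as written: the precise statement is that by the K\"unneth formula, together with the fact that the invariants of an external tensor product representation of $\prod_i G_i$ are the tensor product of the invariants, one has $H^0(\prod_i X_i, (\bigotimes_i\pi_i^*\mathcal{L}_i)^{\otimes r})^{\prod_i G_i}\cong\bigotimes_i H^0(X_i,\mathcal{L}_i^{\otimes r})^{G_i}$. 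This observation in fact furnishes a second, purely algebraic proof directly from the definition of semistability---an invariant section nonvanishing at $(x_1,\dots,x_k)$ has a pure-tensor component nonvanishing there, and conversely---avoiding the Hilbert--Mumford criterion altogether. Either way, set-theoretic equality of the two open subsets gives equality as open subschemes, so the scheme-theoretic assertion needs no separate argument.
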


Recall that
\[ F \cong Q_1 \times \dots \times Q_s \]
where $Q_i= \mathrm{Quot}(V_i \otimes \mathcal{O}(-n),P_i)$. Consider the linearisation $\mathcal{L}_i = \mathrm{det}(\pi_{Q_i*}(\mathcal{U}_i \otimes \pi_W^*\mathcal{O}(m)))$ of the $\mathrm{SL}(V_i)$-action on $Q_i$ where $\mathcal{U}_i$ is the universal quotient sheaf on this quot scheme. By \cite{simpson}, Theorem 1.19 provided $n$ and $m$ are sufficiently large the points of the semistable subscheme
\[ Q_i^{ss} :=Q_i^{\mathrm{SL}(V_i)-ss}(\mathcal{L}_i)\]
are quotient sheaves $q_i : V_i \otimes \mathcal{O}(-n) \rightarrow \mathcal{E}_i$ where $\mathcal{E}_i$ is Gieseker semistable.

\begin{prop} \label{beta is an index}
Under the isomorphism $F \cong Q_1 \times \dots \times Q_s$ the semistable part of $F$ with respect to $\mathcal{L}^{\chi_{-\beta}}$ is isomorphic to the product of the GIT semistable subschemes $Q_i^{ss}$:
\[ F^{ss} \cong Q_1^{ss}\times \cdots \times Q_s^{ss} .\]
Furthermore, for $n$ and $m$ sufficiently large the limit point $\overline{\rho} \in  F^{ss} \cap Q \subset Z_\beta^{ss}$ and so $\beta$ is an index in the stratification of $\overline{Q}$.
\end{prop}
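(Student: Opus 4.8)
The plan is to prove the two assertions separately, first establishing the product decomposition $F^{ss}\cong Q_1^{ss}\times\cdots\times Q_s^{ss}$ and then locating the limit point $\overline{\rho}$ inside it. For the first assertion I would start from Lemma \ref{lins equal}, which identifies $F^{ss}=F^{\mathrm{Stab}\beta-ss}(\mathcal{L}^{\chi_{-\beta}})$ with the semistable locus for the action of the semisimple subgroup $G'=\prod_{i}\mathrm{SL}(V_i)$ with respect to the untwisted linearisation $\mathcal{L}$. Under $F\cong Q_1\times\cdots\times Q_s$ the group $G'$ acts factorwise, so to invoke the product formula of Lemma \ref{lemma GIT 2} the one thing that remains is to identify the restriction $\mathcal{L}|_F$ with the external tensor product $\bigotimes_i \pi_i^*\mathcal{L}_i$. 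This is a direct computation: a point of $F$ is a quotient $q=\oplus_i q_i$ with $\mathcal{F}=\oplus_i\mathcal{E}_i$, so the universal quotient sheaf restricts on $F\times W$ to $\oplus_i$ of the pullbacks of the $\mathcal{U}_i$, and since $\det$ carries direct sums to tensor products the line bundle $\det(\pi_{\overline{Q}*}(\mathcal{U}\otimes\pi_W^*\mathcal{O}(m)))$ restricts to $\bigotimes_i \pi_i^*\det(\pi_{Q_i*}(\mathcal{U}_i\otimes\pi_W^*\mathcal{O}(m)))=\bigotimes_i\pi_i^*\mathcal{L}_i$. Lemma \ref{lemma GIT 2} then yields $F^{ss}\cong\prod_i Q_i^{\mathrm{SL}(V_i)-ss}(\mathcal{L}_i)=\prod_i Q_i^{ss}$.

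Next I would locate $\overline{\rho}$. By Lemma \ref{gr object is in Z} the limit $\overline{\rho}=\lim_{t\to0}\lambda_\beta(t)\cdot\rho$ is the quotient $V\otimes\mathcal{O}(-n)\to\mathrm{Gr}^{HN}(\mathcal{F})=\oplus_i\mathcal{F}^{(i)}/\mathcal{F}^{(i-1)}$, which under $V=\oplus_i V_i$ splits as $\overline{\rho}=\oplus_i\overline{\rho}_i$ with $\overline{\rho}_i:V_i\otimes\mathcal{O}(-n)\to\mathcal{F}^{(i)}/\mathcal{F}^{(i-1)}$. Each successive quotient of the Harder--Narasimhan filtration is Gieseker semistable with Hilbert polynomial $P_i$, and since $n$ was chosen large enough that all these subquotients are $n$-regular, each $\overline{\rho}_i$ lies in $Q_i$ with $H^0(\overline{\rho}_i(n))$ an isomorphism; by \cite{simpson} Theorem 1.19 we then have $\overline{\rho}_i\in Q_i^{ss}$. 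The product decomposition just proved therefore gives $\overline{\rho}=\oplus_i\overline{\rho}_i\in F^{ss}$. Moreover $\mathrm{Gr}^{HN}(\mathcal{F})$, being a finite direct sum of pure $n$-regular sheaves of dimension $e$, is itself pure of dimension $e$ with $H^0(\overline{\rho}(n))$ an isomorphism, so $\overline{\rho}\in Q$, and hence $\overline{\rho}\in F^{ss}\cap Q$.

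Finally I would pass from $F^{ss}$ to $Z_\beta^{ss}$. By Corollary \ref{cor on conn comp} the scheme $F\cap\overline{Q}$ is a union of connected components of $Z_\beta$; it is closed in $\overline{Q}$ and, since $\mathrm{Stab}\beta$ is connected, it is $\mathrm{Stab}\beta$-invariant. For a point of $F\cap\overline{Q}$ both the linearisation $\mathcal{L}^{\chi_{-\beta}}$ and the acting group $\mathrm{Stab}\beta$ are the same whether the point is viewed inside $F$ or inside $Z_\beta$, and because a one-parameter subgroup of $\mathrm{Stab}\beta$ sends the point to a limit lying again in the closed invariant set $F\cap\overline{Q}$, the Hilbert--Mumford weights computed in the two schemes coincide. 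Hence $F^{ss}\cap\overline{Q}=Z_\beta^{ss}\cap(F\cap\overline{Q})$, so $\overline{\rho}\in Z_\beta^{ss}$. Since $p_\beta(\rho)=\overline{\rho}$, we get $\rho\in p_\beta^{-1}(Z_\beta^{ss})=Y_\beta^{ss}$, so $Y_\beta^{ss}$ is non-empty and therefore $\beta$ is an index in $\mathcal{B}$, as required.

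The step I expect to be the main obstacle is this last comparison of semistable loci: one must ensure that GIT semistability computed inside the large fixed-point scheme $F\cong\prod_i Q_i$ genuinely agrees with semistability inside $Z_\beta$. The reduction to the numerical Hilbert--Mumford criterion on the shared connected components (using that these are closed and invariant, so limit points cannot escape) is what forces the two notions to coincide; by contrast the identification $\mathcal{L}|_F\cong\bigotimes_i\pi_i^*\mathcal{L}_i$ in the first step, while essential for applying the product formula, is routine.
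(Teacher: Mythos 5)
Your proposal is correct and follows essentially the same route as the paper's proof: Lemma \ref{lins equal} to replace the twisted $\mathrm{Stab} \beta$-linearisation by the $G'$-action with respect to $\mathcal{L}$, the identification $\mathcal{L}|_F \cong \bigotimes_i \pi_i^* \mathcal{L}_i$ feeding into Lemma \ref{lemma GIT 2}, and then Simpson's equivalence of GIT and Gieseker semistability to place $\overline{\rho} = \oplus_i \overline{\rho}_i$ in $Q \cap F^{ss}$. The only difference is that you make explicit, via the Hilbert--Mumford criterion applied to the closed $\mathrm{Stab} \beta$-invariant subscheme $F \cap \overline{Q}$ (whose one-parameter-subgroup limits cannot escape it), the final inclusion $F^{ss} \cap Q \subset Z_\beta^{ss}$, which the paper asserts without comment; this is a sound elaboration of an implicit step rather than a different argument.
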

\begin{proof}
By Lemma \ref{lins equal}, we have that $ F^{ss} := F^{\mathrm{Stab} \beta -ss}(\mathcal{L}^{\chi_{-\beta}})=F^{G'-ss}(\mathcal{L})$ where $G' =\prod_{i=1}^s \mathrm{SL}(V_i )$. If we can show that $\mathcal{L}|_F  \cong \otimes \pi_i^* \mathcal{L}_i$, then, by Lemma \ref{lemma GIT 2}, \[F^{G'-ss}(\mathcal{L})=F^{G'-ss}(\otimes \pi_i^* \mathcal{L}_i) = Q_1^{ss} \times \cdots \times Q_s^{ss}\] where $\pi_i : F \cong \prod_{j=1}^s Q_j \rightarrow Q_i$ is the $i$th projection map. Let $i : F \hookrightarrow \mathrm{Quot}(V \otimes \mathcal{O}(-n),P)$ and $j : F \times W \hookrightarrow \mathrm{Quot}(V \otimes \mathcal{O}(-n),P) \times W$ denote the inclusions. Then
\[ \begin{array}{cl} \mathcal{L}|_{F} & = i^* \mathrm{det}(\pi_{*}(\mathcal{U} \otimes \pi_W^* \mathcal{O}(n))) \\ & \cong \mathrm{det} \pi_{F *} j^*(\mathcal{U} \otimes \pi_W^* \mathcal{O}(n)) \end{array}\]
since the determinant commutes with pullbacks and $i$ is flat. The universal family $\mathcal{U}$ pulls back via the morphism $j : F \times W \hookrightarrow \mathrm{Quot}(V \otimes \mathcal{O}(-n),P) \times W $ to the family $ \oplus_{i=1}^s p_i^* \mathcal{U}_i$ parameterised by $F$, where $p_i : F \times W 
 \cong (\prod_{j=1}^s Q_j) \times W \rightarrow Q_i \times W$ is the obvious projection map. 
Thus
\[ \begin{array}{cl} \mathcal{L}|_{F} &  \cong \mathrm{det} \left(\bigoplus_{i=1}^s \pi_{F *}(p_i^*\mathcal{U}_i \otimes (\pi_W^{F \times W})^* \mathcal{O}(n))\right) 
\\ & \cong \bigotimes_{i=1}^s \mathrm{det}  \pi_{F *}p_i^* (\mathcal{U}_i \otimes (\pi_W^{Q_i \times W})^* \mathcal{O}(n))
\\ & \cong \bigotimes_{i=1}^s \mathrm{det} \pi_i^* \pi_{Q_i*} (\mathcal{U}_i \otimes (\pi_W^{Q_i \times W})^* \mathcal{O}(n))
 \cong \bigotimes_{i=1}^s \pi_i^*\mathcal{L}_i.
 \end{array} \] 

We have $\overline{\rho}= \oplus \rho_i$ where $\rho_i : V_i \otimes \mathcal{O}(-n) \rightarrow \mathcal{F}^{(i)}/\mathcal{F}^{(i-1)}$ is a quotient of
$ V_i \otimes \mathcal{O}(-n) $ such that $H^0(\rho_i(n))$ is an isomorphism and $\mathcal{F}^{(i)}/\mathcal{F}^{(i-1)}$ is a semistable sheaf. We pick $n$ and then $m$ sufficiently large as in \cite{simpson} so that GIT semistability of points in $Q_i$ with respect to $\mathcal{L}_i$ is equivalent to Gieseker semistability of the associated sheaves. Then
\[Q_i^{ss}:=Q_i^{\mathrm{SL}(V_i )-ss}(\mathcal{L}_i)\]
 is the open subset of quotients parameterising semistable sheaves. By definition of the Harder--Narasimhan filtration  $\rho_i
\in  Q_i^{ss}$ and so $\overline{\rho} \in Q \cap F^{ss} \subset Z_\beta^{ss}$. In particular $S_\beta$ is nonempty and so $\beta$ is an index for the stratification of $\overline{Q}$.
\end{proof}

\begin{prop}\label{strats agree} 
Choose an ordered basis of $V$ and a positive Weyl chamber $\mathfrak{t}_+$ in the Lie algebra of the associated maximal torus of $G=\mathrm{SL}(V)$. Let $\tau = (P_1, \dots P_s)$ be a Harder--Narasimhan type and let $\beta = \beta(\tau) = \beta(\tau,n,m) \in \mathfrak{t}_+$ be as at (\ref{defbeta}). If $n$ and $m$ are sufficiently large, then we can give $R_\tau$ a scheme structure such that every connected component of $R_\tau$ is a connected component of $S_\beta$.
\end{prop}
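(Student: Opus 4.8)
The plan is to identify $R_\tau$ set-theoretically with the union $GY_{(\tau)}^{ss}$ of connected components of $S_\beta$ appearing in Remark \ref{components}, and then to equip $R_\tau$ with the locally closed subscheme structure it inherits from $\overline{Q}$. Here $Y_{(\tau)}^{ss} = p_\beta^{-1}(Z_{(\tau)}^{ss})$, where $Z_{(\tau)} = F \cap \overline{Q}$ is the union of connected components of $Z_\beta$ identified in Corollary \ref{cor on conn comp}, so that $Z_{(\tau)}^{ss}$ is the factor $Q_1^{ss} \times \cdots \times Q_s^{ss}$ of Proposition \ref{beta is an index}. Since the Harder--Narasimhan type of $\mathcal{F}$ is unchanged when $\rho$ is reparametrised by $G = \mathrm{SL}(V)$, the set $R_\tau$ is $G$-invariant, as is $Q$; it therefore suffices to prove the two inclusions $R_\tau \subseteq GY_{(\tau)}^{ss}$ and $Y_{(\tau)}^{ss} \subseteq R_\tau$.

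The first inclusion is a direct assembly of the preceding results. Given $\rho \in R_\tau$, Lemma \ref{gr object is in Z} places $\rho$ in $Y_\beta$, and the computation in Proposition \ref{beta is an index} shows that its limit $\overline{\rho} = \lim_{t \to 0}\lambda_\beta(t)\cdot\rho = \mathrm{Gr}^{HN}(\mathcal{F})$ lies in $Q_1^{ss} \times \cdots \times Q_s^{ss} = Z_{(\tau)}^{ss}$, this being exactly the component whose summands carry Hilbert polynomials $(P_1,\ldots,P_s)$. Hence $\rho \in p_\beta^{-1}(Z_{(\tau)}^{ss}) = Y_{(\tau)}^{ss} \subseteq GY_{(\tau)}^{ss}$.

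The reverse inclusion is where the real work lies. Take $\rho : V \otimes \mathcal{O}(-n) \to \mathcal{F}$ in $Y_{(\tau)}^{ss}$; then $\overline{\rho} = p_\beta(\rho) \in Z_{(\tau)}^{ss} \cong Q_1^{ss} \times \cdots \times Q_s^{ss}$, so $\overline{\rho} = \bigoplus_i \rho_i$ with each $\rho_i : V_i \otimes \mathcal{O}(-n) \to \mathcal{E}_i$ a point of $Q_i^{ss}$, meaning $\mathcal{E}_i$ is Gieseker semistable of Hilbert polynomial $P_i$ and $H^0(\rho_i(n))$ is an isomorphism. By the limit description of \cite{huybrechts} Lemma 4.4.3 the graded pieces $\mathcal{F}^{(i)}/\mathcal{F}^{(i-1)}$ of the filtration $\mathcal{F}^{(i)} = \rho(V^{(i)}\otimes\mathcal{O}(-n))$ determined by $\lambda_\beta$ are precisely these $\mathcal{E}_i$. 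Three points then need checking: that $\mathcal{F}$ is pure of dimension $e$, which holds because an extension of pure sheaves of dimension $e$ is again pure of dimension $e$; that $H^0(\rho(n))$ is an isomorphism, which follows from the $n$-regularity of the $\mathcal{E}_i$ and the isomorphisms $H^0(\rho_i(n))$ by climbing the filtration through the long exact cohomology sequences together with the count $\dim V = \sum_i P_i(n) = P(\mathcal{F},n)$, so that $\rho \in Q$; and that the reduced Hilbert polynomials of the $\mathcal{E}_i$ are strictly decreasing, which is built into $\beta = \beta(\tau)$ since $\beta_1 > \cdots > \beta_s$ is equivalent to $P_1/r_1 > \cdots > P_s/r_s$. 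A filtration of a pure sheaf with semistable quotients of strictly decreasing reduced Hilbert polynomials is, by the uniqueness of the Harder--Narasimhan filtration, the Harder--Narasimhan filtration itself, so $\mathcal{F}$ has Harder--Narasimhan type $\tau$ and $\rho \in R_\tau$.

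Combining the two inclusions with the $G$-invariance of $R_\tau$ and $Q$ yields $R_\tau = GY_{(\tau)}^{ss}$; by Remark \ref{components} this is a union of connected components of $S_\beta$, which is itself locally closed in $\overline{Q}$, so endowing $R_\tau$ with the induced scheme structure makes each of its connected components a connected component of $S_\beta$, as required. The step I expect to be the main obstacle is the reverse inclusion, and within it the verification that an arbitrary point of $Y_{(\tau)}^{ss}$ genuinely parametrises a sheaf lying in $Q$ of Harder--Narasimhan type $\tau$ --- that is, controlling purity and $n$-regularity of the total sheaf $\mathcal{F}$ from those of its graded pieces, and then invoking uniqueness of the Harder--Narasimhan filtration to pin down the type. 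All assertions requiring $n$ and $m$ large are inherited from the hypotheses of Proposition \ref{beta is an index} and the boundedness underlying Lemma \ref{HN bdd}.
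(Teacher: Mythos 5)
There is a genuine gap in your first inclusion. You claim that every $\rho \in R_\tau$ lies in $Y_\beta$, citing Lemma \ref{gr object is in Z}, and you conclude $\rho \in Y_{(\tau)}^{ss}$ directly. But that lemma is proved for the particular point $\rho$ fixed at the start of $\S$\ref{refined strat}, for which the basis of $V$ --- and hence the flag $V^{(1)} \subset \cdots \subset V^{(s)}$ and the one-parameter subgroup $\lambda_\beta$ --- was chosen \emph{adapted to that $\rho$}, via $V^{(i)} = H^0(\rho(n))^{-1}(H^0(\mathcal{F}^{(i)}(n)))$. In the proposition the basis, and therefore $Y_\beta$ and $Y_{(\tau)}^{ss}$, are fixed once and for all, while $\rho$ ranges over all of $R_\tau$. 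For a general $\rho' : V \otimes \mathcal{O}(-n) \to \mathcal{F}'$ in $R_\tau$ the subspaces $W^{(i)} := H^0(\rho'(n))^{-1}(H^0({\mathcal{F}'}^{(i)}(n)))$ induced by the Harder--Narasimhan filtration differ from the fixed $V^{(i)}$, and the limit of $\rho'$ under $\lambda_\beta$ is the graded object of the filtration $\rho'(V^{(i)} \otimes \mathcal{O}(-n))$, which is \emph{not} $\mathrm{Gr}^{HN}(\mathcal{F}')$ and need not land in $Z_\beta$ at all, let alone in $Z_{(\tau)}^{ss}$. Indeed the inclusion $R_\tau \subseteq Y_{(\tau)}^{ss}$ that your argument asserts is false whenever $P_\beta \neq G$: $R_\tau$ is $G$-invariant while $Y_{(\tau)}^{ss}$ is only $P_\beta$-invariant, and $S_\beta \cong G \times_{P_\beta} Y_\beta^{ss}$ is strictly larger than $Y_\beta^{ss}$ (take any $\rho \in Y_{(\tau)}^{ss}$ and $g \notin P_\beta$; then $g \cdot \rho \in R_\tau \setminus Y_\beta$). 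The missing idea --- and it is exactly the step the paper's proof supplies --- is a change of basis: given $\rho' \in R_\tau$, choose $g \in \mathrm{SL}(V)$ carrying $W^{(i)}$ to $V^{(i)}$ for each $i$, which is possible because $\dim W^{(i)} = \sum_{j \leq i} P_j(n) = \dim V^{(i)}$ by $n$-regularity; then $g \cdot \rho'$ is aligned with the fixed flag, Proposition \ref{beta is an index} applies to it, and one concludes $g \cdot \rho' \in Y_{(\tau)}^{ss}$, i.e.\ $\rho' \in G Y_{(\tau)}^{ss}$, which is all your final assembly needs.

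The rest of your argument is sound and close to the paper's. Your reverse inclusion $Y_{(\tau)}^{ss} \subseteq R_\tau$ (purity of $\mathcal{F}$ via extensions of pure sheaves, $H^0(\rho(n))$ an isomorphism by climbing the filtration with the cohomology sequences and $n$-regularity, and uniqueness of the Harder--Narasimhan filtration to pin down the type) is in fact more detailed than the paper's one-sentence treatment of this direction, and your deduction of the component statement from Corollary \ref{cor on conn comp} and Remark \ref{components} matches the paper's. One small caveat: in identifying $Z_{(\tau)}^{ss}$ with $(Q_1^{ss} \times \cdots \times Q_s^{ss}) \cap \overline{Q}$ you are implicitly using Simpson's Theorem 1.19 --- recorded in the paper as $\overline{R}_i^{ss} = R_i^{ss}$ --- to know that GIT-semistable points of the $Q_i$ already parametrise pure sheaves $\mathcal{E}_i$ with $H^0(q_i(n))$ an isomorphism; your direct verifications cover this, but the citation should be made explicit. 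With the conjugation step restored, your proof coincides with the paper's.
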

\begin{proof} Let $n$ and $m$ be chosen as in Proposition \ref{beta is an index}. Let $R_i$ be the open subscheme of $Q_i$ consisting of quotient sheaves $ q_i : V_i \otimes \mathcal{O}(-n) \rightarrow \mathcal{E}_i$ which are pure of dimension $e$ and such that $H^0(q_i(n))$ is an isomorphism. Let $R_i^{ss}$ denote the semistable subscheme for the $\mathrm{SL}(V_i)$-action on $R_i$. Then consider the subschemes
\[ Z_{(\tau)}^{ss} = \{ q = \oplus_{i=1}^s q_i :  (q_i : V_i \otimes \mathcal{O}(-n) \rightarrow \mathcal{E}_i) \in R_i^{ss} \} \]
of $Z_\beta^{ss}$ and $Y_{(\tau)}^{ss}=p_\beta^{-1}(Z_{(\tau)}^{ss})$ of $Y_\beta^{ss}$.

Any quotient sheaf $q: V \otimes \mathcal{O}(-n) \rightarrow \mathcal{F}$ in $Y_{(\tau)}^{ss}$ has a filtration and associated graded object $\overline{q}: V \otimes \mathcal{O}(-n) \rightarrow \mathcal{F}$ for which the successive quotients are semistable with Hilbert polynomials $P_1, \dots, P_s$; i.e. $\mathcal{F}$ has Harder--Narasimhan type $\tau$. As
$R_\tau$ is $G$-invariant it follows immediately that every point in $GY_{(\tau)}^{ss}$ is a point in $R_\tau$. Conversely let $\rho: V \otimes \mathcal{O}(-n) \rightarrow \mathcal{E}$ be any point in $R_\tau$; then the Harder--Narasimhan filtration of $\mathcal{E}$ gives rise to a filtration of $V$ by subspaces $W^{(i)}= H^0(q(n))^{-1}(H^0(\mathcal{E}^{(i)}(n)))$. We choose $g \in G=\mathrm{SL}(V)$ to be a change of basis matrix sending $W^{(i)}$ to $V^{(i)}$ for each $i$, which
is possible since $\dim W^{(i)} = \dim V^{(i)} = \sum_{j\leq i} P_i(n)$. Then $g \cdot q \in Y_{(\tau)}^{ss}$ 
by Proposition \ref{beta is an index}, so 
\[R_\tau = GY_{(\tau)}^{ss} \cong G \times_{P_\beta} Y_{(\tau)}^{ss}\] and this gives the set $R_\tau$ its scheme structure. 

Since $\overline{R}_i^{ss} = R_i^{ss}$ (cf. \cite{simpson} Theorem 1.19) the subscheme $Z_{(\tau)}^{ss}$ is closed in $F^{ss} \cap \overline{Q}$ and is thus a union of connected components of $Z_\beta^{ss}$ by Corollary \ref{cor on conn comp}. It follows that $R_\tau= GY_{(\tau)}^{ss}$ is a union of connected components of $S_\beta$ by Remark \ref{components}.
\end{proof}

\section{$n$-rigidified sheaves of fixed Harder--Narasimhan type}\label{nrigidified}\markright{$n$-rigidified sheaves of fixed harder--narasimhan type} 

As in the previous section we let $\tau = (P_1,  \dots P_s)$ be a Harder--Narasimhan type and let $\beta = \beta(\tau) = \beta(\tau,n,m) \in \mathfrak{t}_+$ be the associated rational weight given at (\ref{defbeta}). In section \ref{moduli unstable sheaves} below we consider the action of $\mathrm{Stab} \beta$ on the closure $\overline{Y}_{(\tau)}$ in
the quot scheme $\mathrm{Quot}(V \otimes \mathcal{O}(-n),P)$ of the subscheme $Y_{(\tau)}^{ss}$ defined in the proof of Proposition \ref{strats agree}. We know the $P_\beta$-orbits in $Y_{(\tau)}^{ss}$ correspond to 
$G$-orbits in $R_\tau \cong G  \times_{P_\beta} Y_{(\tau)}^{ss}$ and thus to 
isomorphism classes of sheaves of Harder--Narasimhan type $\tau$, and so in this section we study the objects parametrised by the $\mathrm{Stab} \beta$-orbits 
in $Y_{(\tau)}^{ss}$.

\begin{defn}\label{n-rig def}
Let $n$ be a positive integer and $\mathcal{F}$ be a sheaf with Harder--Narasimhan type $\tau$. Let $0 \subset \mathcal{F}^{(1)} \subset \cdots \subset \mathcal{F}^{(s)}= \mathcal{F}$ denote the Harder--Narasimhan filtration of $\mathcal{F}$ and $\mathcal{F}_i:= \mathcal{F}^{(i)}/ \mathcal{F}^{(i-1)}$ denote the successive quotients. Then an $n$-rigidification for $\mathcal{F}$ is an isomorphism
\[ H^0( \mathcal{F}(n)) \cong  \oplus_{i=1}^s H^0(\mathcal{F}_i(n))\]
which is compatible with the inclusion morphisms $j^{(i)} : \mathcal{F}^{(i)} \hookrightarrow \mathcal{F}$ and projection morphisms $\pi^{(i)} : \mathcal{F}^{(i)} \rightarrow \mathcal{F}_i$; that is, for each $i$ we have a commutative triangle
\begin{center}{$
\begin{diagram} \node{H^0(\mathcal{F}^{(i)}(n))} \arrow{e,t}{j^{(i)}_*} \arrow{s,b}{\pi^{(i)}_*} \node{H^0(\mathcal{F}(n))}  \arrow{sw,b}{}  \\ \node{H^0(\mathcal{F}_i(n))} \end{diagram}
$}\end{center}
where the unlabelled arrow is the given isomorphism $H^0( \mathcal{F}(n)) \cong  \oplus_{i=1}^s H^0(\mathcal{F}_i(n))$ followed by the $i$th projection. An isomorphism of two $n$-rigidified sheaves $\mathcal{E}$ and $\mathcal{F}$ is an isomorphism of sheaves $\phi: \mathcal{E} \cong \mathcal{F}$ such that for each $i$  the induced isomorphisms $H^0(\mathcal{E}^{(i)}(n)) \cong H^0(\mathcal{F}^{(i)}(n))$ are compatible with the $n$-rigidifications; i.e., we have a commutative square of isomorphisms
\begin{center}{$
\begin{diagram} \node{H^0(\mathcal{E}(n))} \arrow{e,t}{} \arrow{s,b}{} \node{H^0(\mathcal{F}(n))}  \arrow{s,b}{}  \\ \node{\oplus_{i=1}^s H^0(\mathcal{E}_i(n))} \arrow{e,t}{} \node{\oplus_{i=1}^s H^0(\mathcal{F}_i(n))} \end{diagram}
$}\end{center}
where the horizontal morphisms are induced by the isomorphism $\phi$ and the vertical morphisms are the given $n$-rigidifications for each sheaf.
\end{defn}

\begin{rmk}\label{n-rigs exist}
Any sheaf $\mathcal{F}$ with Harder--Narasimhan type $\tau$ has an $n$-rigidification for $n >\!> 0$ where $n$ is sufficiently large so the higher cohomology of $\mathcal{F}(n)$ and $\mathcal{F}_i(n)$ vanish. In fact if we pick $n$ as required for Proposition \ref{strats agree}, then the quotient sheaf $q : V \otimes \mathcal{O}(-n) \rightarrow \mathcal{F} $ has a natural $n$-rigidification coming from the eigenspace decomposition $V = \oplus_{i=1}^s V_i$ of $V$ for $\lambda_\beta(\mathbb{C}^*)$ and the isomorphisms $V \cong H^0(\mathcal{F}(n))$ and $V_i \cong H^0(\mathcal{F}_i(n))$ induced by $q$.
\end{rmk}

\begin{lemma}\label{n-rig lemma}
Consider the $n$-rigidified sheaves represented by points
$q : V \otimes \mathcal{O}(-n) \rightarrow \mathcal{E} $ and $q' : V \otimes \mathcal{O}(-n) \rightarrow \mathcal{F} $ in $Y_{(\tau)}^{ss}$ 
as in  Remark \ref{n-rigs exist}. These $n$-rigidified sheaves are isomorphic if and only if there is some $g \in \Pi_{i=1}^s \mathrm{GL}(V_i)$ such that $g \cdot q = q'$.
\end{lemma}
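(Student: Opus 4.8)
The plan is to translate the relation between $q$ and $q'$ entirely into linear data on $V$, using the $n$-regularity we have arranged. Write $\psi := H^0(q(n)) : V \xrightarrow{\sim} H^0(\mathcal{E}(n))$ and $\psi' := H^0(q'(n)) : V \xrightarrow{\sim} H^0(\mathcal{F}(n))$ for the isomorphisms on sections attached to the two points, and let $\psi_i : V_i \xrightarrow{\sim} H^0(\mathcal{E}_i(n))$, $\psi_i' : V_i \xrightarrow{\sim} H^0(\mathcal{F}_i(n))$ be the isomorphisms induced on the $i$th graded pieces. Since all the sheaves involved are $n$-regular, the evaluation map exhibits $\mathcal{E}(n)$ and $\mathcal{F}(n)$ as generated by their global sections, so a surjection $V \otimes \mathcal{O}(-n) \to \mathcal{E}$ is determined by the induced map on $H^0(\,\cdot\,(n))$; in particular two such surjections with the same target agree if and only if they agree on sections. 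By Remark \ref{n-rigs exist} the natural $n$-rigidifications are the maps $r_q : H^0(\mathcal{E}(n)) \to \bigoplus_i H^0(\mathcal{E}_i(n))$ and $r_{q'} : H^0(\mathcal{F}(n)) \to \bigoplus_i H^0(\mathcal{F}_i(n))$ obtained by transporting the fixed splitting $V = \bigoplus_{i=1}^s V_i$ through $\psi$ and $\psi'$; concretely $r_q \circ \psi = \bigoplus_i \psi_i$ and $r_{q'} \circ \psi' = \bigoplus_i \psi_i'$, which are block diagonal by construction.

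For the implication $\Rightarrow$, suppose the $n$-rigidified sheaves are isomorphic via $\phi : \mathcal{E} \xrightarrow{\sim} \mathcal{F}$. Because the Harder--Narasimhan filtration is canonical, $\phi$ carries the filtration of $\mathcal{E}$ to that of $\mathcal{F}$ and induces $\phi_i : \mathcal{E}_i \xrightarrow{\sim} \mathcal{F}_i$; on sections these give $\Phi := H^0(\phi(n))$ and $\Phi_i := H^0(\phi_i(n))$. I would set $g := (\psi')^{-1} \circ \Phi \circ \psi \in \mathrm{GL}(V)$. The identity $\Phi \circ \psi = \psi' \circ g$ on sections upgrades, by the global generation above, to $\phi \circ q = q' \circ (g \otimes \mathrm{id})$, so $q' = \phi \circ (g \cdot q)$; since $q'$ and $g \cdot q = q \circ (g^{-1}\otimes\mathrm{id})$ then differ only by the target isomorphism $\phi$ they have the same kernel and define the same point, i.e. $g \cdot q = q'$. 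That $g$ lies in $\prod_i \mathrm{GL}(V_i)$ is exactly where compatibility with the rigidifications is used: the rigidification square reads $r_{q'} \circ \Phi = (\bigoplus_i \Phi_i) \circ r_q$, so $r_{q'} \circ \Phi \circ \psi = (\bigoplus_i \Phi_i)\circ(\bigoplus_i \psi_i) = \bigoplus_i (\Phi_i \psi_i)$ is block diagonal; comparing with $r_{q'} \circ \psi' \circ g = (\bigoplus_i \psi_i') \circ g$ gives $g = \bigoplus_i \big((\psi_i')^{-1}\Phi_i\psi_i\big)$, which preserves each $V_i$.

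For the converse $\Leftarrow$, given $g = \bigoplus_i g_i \in \prod_i \mathrm{GL}(V_i)$ with $g \cdot q = q'$, the relation $q' = q \circ (g^{-1}\otimes\mathrm{id})$ shows that $q$ and $q'$ have the same image sheaf, so $\phi := \mathrm{id}_{\mathcal{E}}$ is an isomorphism of the underlying sheaves; block-diagonality of $g$ means it preserves the flag $V^{(i)}$ and hence the Harder--Narasimhan filtration, so $\phi$ induces the identity on sections and on each graded piece. One checks that the induced graded maps satisfy $\psi_i' = \psi_i \circ g_i^{-1}$, whence $r_{q'} = \big(\bigoplus_i \psi_i g_i^{-1}\big) \circ g \circ \psi^{-1} = \big(\bigoplus_i \psi_i\big)\circ\psi^{-1} = r_q$ using $g = \bigoplus_i g_i$. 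Thus the rigidification square commutes and $\phi$ is an isomorphism of $n$-rigidified sheaves.

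The routine ingredients are the passage between sheaf surjections and linear maps on $V$ (the standard consequence of $n$-regularity underlying Simpson's construction, as used for Proposition \ref{strats agree}) and the bookkeeping of the induced maps on graded pieces. The one genuine point, and the step I would treat most carefully, is the equivalence between commutativity of the rigidification square and block-diagonality of $g$: the $n$-rigidification is designed precisely so that an abstract isomorphism of sheaves is compatible with it if and only if the corresponding element of $\mathrm{GL}(V)$ lies in the Levi subgroup $\prod_i \mathrm{GL}(V_i)$ rather than in the larger parabolic stabilising the flag.
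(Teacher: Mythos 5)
Your proof is correct, and in the converse direction it coincides with the paper's; in the forward direction you take a genuinely different route. The paper first produces $g \in \mathrm{GL}(V)$ with $g \cdot q = q'$ from the underlying sheaf isomorphism, then invokes the structure of the stratum: since $q$ and $q'$ both lie in $Y_\beta^{ss}$ and $GY_\beta^{ss} \cong G \times_{P_\beta} Y_\beta^{ss}$, any $g$ carrying $q$ to $q'$ must lie in the parabolic $P_\beta$, i.e.\ be block upper triangular, and only then does compatibility with the rigidifications cut the parabolic down to its Levi $\Pi_{i=1}^s \mathrm{GL}(V_i)$. You bypass the stratification input entirely: writing $g = (\psi')^{-1} \circ \Phi \circ \psi$ on sections and feeding the rigidification square $r_{q'} \circ \Phi = (\oplus_i \Phi_i) \circ r_q$ directly into this formula yields $g = \oplus_i \left((\psi_i')^{-1} \Phi_i \psi_i\right)$ in one step, which is block diagonal; block-diagonality thus comes out of the computation rather than as a refinement of membership in $P_\beta$. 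Your argument is accordingly more self-contained --- it uses only the canonicity of the Harder--Narasimhan filtration, the description of the natural rigidification in Remark \ref{n-rigs exist}, and the adjunction $\mathrm{Hom}(V \otimes \mathcal{O}(-n), \mathcal{E}) \cong \mathrm{Hom}(V, H^0(\mathcal{E}(n)))$ together with $n$-regularity, not the isomorphism $S_\beta \cong G \times_{P_\beta} Y_\beta^{ss}$ --- at the cost of more explicit bookkeeping, whereas the paper's version is shorter because the parabolic step is free given the machinery already developed. One cosmetic point: in the converse you set $\phi = \mathrm{id}_{\mathcal{E}}$, whereas $g \cdot q = q'$ as points of the quot scheme strictly means equality of kernels, so $\phi$ should be the induced isomorphism $\mathcal{E} \cong \mathcal{F}$ satisfying $\phi \circ q \circ (g^{-1} \otimes \mathrm{id}) = q'$; your identity $r_{q'} = r_q$ is exactly the commutativity of the rigidification square under that identification, so the substance is unaffected.
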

\begin{proof}
If $\mathcal{E}$ and $\mathcal{F}$ are isomorphic as $n$-rigidified sheaves then, in particular, they are isomorphic as sheaves and so there is some $g \in \mathrm{GL}(V)$ such that $g \cdot q = q'$. As $q$ and $q'$ are both in $Y_\beta^{ss}$ 
and $G Y_\beta^{ss} \cong G \times_{P_\beta} Y_\beta^{ss} $,
we know that $g \in P_\beta$ is block upper triangular with respect to the blocks for $\beta$. Then as the isomorphism is compatible with the $n$-rigidifications, we see that $g$ must be block diagonal; i.e., $g$ is an element of $\mathrm{Stab} \beta = \Pi_{i=1}^s \mathrm{GL}(V_i)$.

Conversely if there is a $g \in \Pi_{i=1}^s \mathrm{GL}(V_i)$ such that $g \cdot q = q'$ then this induces a sheaf isomorphism $\mathcal{E} \cong \mathcal{F}$. The fact that $g$ is block diagonal with respect to the blocks for $\beta$ means this isomorphism is an isomorphism of $n$-rigidified sheaves.
\end{proof}

\section{Moduli spaces of rigidified unstable sheaves}\label{moduli unstable sheaves}\markright{moduli spaces of rigidified unstable sheaves}

In this final section we construct moduli spaces of $n$-rigidified sheaves of fixed Harder--Narasimhan type $\tau$ as GIT quotients $\overline{Y_{(\tau)}}/\!/
\mathrm{Stab} \beta$, where $\beta = \beta(\tau)$, with respect to perturbations of the canonical linearisation $\mathcal{L}_\beta$ for the $\mathrm{Stab} \beta$-action on $\overline{Y_{(\tau)}}$.

\begin{rmk}
We would like to construct moduli spaces of sheaves of fixed
Harder--Narasimhan type $\tau$ as GIT quotients $\overline{Y_{(\tau)}}/\!/P_\beta$ or $G \times_{P_\beta} \overline{Y_{(\tau)}}
/\!/G$ for suitable perturbations of the linearisation $\mathcal{L}_\beta$.
However there are difficulties here since in general the group $P_\beta$ is 
not reductive and the linearisation $\mathcal{L}_\beta$ on 
$G \times_{P_\beta} \overline{Y_{(\tau)}}$ is not ample.
\end{rmk}

\begin{rmk}
 Moduli spaces of unstable bundles of rank 2 on the projective plane have been constructed by Str{\o}mme in \cite{stromme} and this has been generalised to sheaves with Harder--Narasimhan filtrations of length two over smooth projective varieties by Dr\'ezet in \cite{drezet}.
\end{rmk}

We will define a notion of $\theta$-(semi)stability for sheaves over $W$ of a fixed Harder--Narasimhan type $\tau$ corresponding to a sequence of
Hilbert polynomials $(P_1, \ldots, P_s)$
and a moduli functor of $\theta$-semistable $n$-rigidified sheaves of Harder--Narasimhan type $\tau$
over $W$. 
This notion of $\theta$-(semi)stability depends on a parameter $\theta \in \mathbb{Q}^s$ (see Definition  \ref{defthetastability} below), and we will show that if $m>\!> n >\!> 0$ then $\theta$ determines for us a perturbed $\mathrm{Stab} \beta$-linearisation on the closure $\overline{Y}_{(\tau)}$ of $Y_{(\tau)}$ as in $\S$\ref{how to perturb} with the following properties:

\medskip

(i) any $\rho:V \otimes \mathcal{O}(-n) \to \mathcal{F}$ in $Y_\tau^{ss}$ is GIT
semistable for the perturbed linearisation associated to $\theta$ if and only if 
the sheaf $\mathcal{F}$ of Harder--Narasimhan type $\tau$ is $\theta$-semistable
(Theorem \ref{ss agrees} below), and

\medskip

(ii) the associated GIT quotient is a projective scheme which corepresents the moduli functor of $\theta$-semistable $n$-rigidified sheaves of Harder--Narasimhan type $\tau$
over $W$ (Theorem \ref{thm6.16} below).

\medskip

Fix a Harder--Narasimhan type $\tau =( P_1, \cdots , P_s)$ and let $P= \sum_i P_i$; then, by Proposition \ref{strats agree}, for $n$ and $m$ sufficiently large the subvariety $R_\tau = GY_{(\tau)}^{ss} \cong G
\times_{P_\beta} Y_{(\tau)}^{ss}$ of $Q$ parametrising sheaves of Harder--Narasimhan type $\tau$
is a union of connected components of a stratum $S_{\beta(\tau)}$ in the stratification $\{S_\beta: \beta \in \mathcal{B}\}$ of $\overline{Q}$ given by
\[ \beta(\tau) = i \mathrm{diag}(\beta_1, \dots, \beta_1, \dots, \beta_s, \dots \beta_s) \in \mathfrak{t}_+\] 
where
\[ \beta_i= \frac{P(m)}{P(n)} - \frac{P_i(m)}{P_i(n)}\]
appears $P_i(n)$ times.
The stratum $S_\beta$ for $\beta = \beta(\tau)$ is isomorphic to  $G \times_{P_\beta} Y_\beta^{ss}$ and as in $\S$\ref{how to modify} we consider linearisations of the $G$-action on the projective completion
\[ \hat{S_\beta}:= G \times_{P_\beta} \overline{Y}_\beta,\]
where $ \overline{Y}_\beta$ is the closure of $Y_\beta^{ss}$ in $\overline{Q}$. Since $R_\tau \cong G \times_{P_\beta} Y_{(\tau)}^{ss}$ where $Y_{(\tau)}^{ss}$
is a union of connected components of $Y^{ss}_\beta$ we let 
\[\hat{R_\tau} = G \times_{P_\beta} \overline{Y}_{(\tau)}\]
where $\overline{Y}_{(\tau)}$ is the closure of $Y_{(\tau)}^{ss}$ in $\overline{Q}$; 
this is the closure of $R_\tau$ in $\hat{S_\beta}$ and is a projective completion of $R_\tau$.

Let $\mathcal{L}_\beta$ denote the canonical linearisation on $\hat{S_\beta}$ as defined in $\S$\ref{nat linear on hat var} and let $\mathcal{L}_\beta$ also denote its restriction to $\hat{R_\tau}$.
As was noted in $\S$\ref{how to modify}, ${S_\beta}$ and $R_\tau$ have categorical quotients
\[S_\beta \to Z_{\beta} /\!/_{\mathcal{L}_\beta} \mathrm{Stab}{\beta}\]
and
\[R_\tau \to Z_{(\tau)} /\!/_{\mathcal{L}_\beta} \mathrm{Stab}{\beta}\]
but these are far from orbit spaces: the map $p_\beta : Y_{(\tau)}^{ss} \rightarrow Z_{(\tau)}^{ss}$ sends a point $y$ to the graded object associated to its Harder--Narasimhan filtration and since $p_\beta(y)$ is contained in the orbit closure of $y$ these points are S-equivalent, in the sense that they
represent the same points in the categorical quotient. In fact two sheaves $\mathcal{F}$ and $\mathcal{G}$ with Harder--Narasimhan type $\tau$ are S-equivalent in this context if and only if the graded objects associated to their Jordan--H\"{o}lder filtrations are isomorphic. We would like a finer notion of equivalence.

As  was noted in $\S$\ref{how to perturb}, one possible approach to avoiding this problem is to perturb the canonical linearisation, but applying GIT to either the canonical $G$-linearisation on $\hat{S}_\beta$ or
the canonical $P_\beta$-linearisation on $\overline{Y}_\beta$ is delicate. 
So instead we will consider perturbations of the canonical $\mathrm{Stab} \beta$-linearisation $\mathcal{L}_\beta$ on $\overline{Y}_\beta$ given by making a small perturbation to the character $\chi_{-\beta} : \mathrm{Stab} \beta \rightarrow \mathbb{C}^*$ used to twist $\mathcal{L}$.

\subsection{Semistability} We will choose a perturbation of the canonical $\mathrm{Stab} \beta$-linearisation $\mathcal{L}_\beta$ on $\overline{Y}_{(\tau)}$ which depends on a parameter $\theta=(\theta_1, \dots , \theta_s) \in \mathbb{Q}^s$. 
A notion of (semi)stability with respect to this parameter $\theta$ will be defined for all sheaves over $W$ with Harder--Narasimhan type $\tau$. Before stating the definition we first need an easy lemma which enables us to write down the Harder--Narasimhan filtration of a direct sum of pure sheaves $\mathcal{E} \oplus \mathcal{F}$ in terms of the Harder--Narasimhan filtrations of  $\mathcal{E}$ and $\mathcal{F}$.

\begin{lemma} \label{HN direct sum}
Let $\mathcal{E}$ and $\mathcal{F}$ be pure sheaves of dimension $e$ with Harder--Narasimhan filtrations
\[ 0 \subset \mathcal{E}^{(1)} \subset \cdots \subset \mathcal{E}^{(N)} = \mathcal{E}\]
and
\[ 0 \subset \mathcal{F}^{(1)} \subset \cdots \subset \mathcal{F}^{(M)} = \mathcal{F}.\]
Then the maximal destabilising subsheaf of $\mathcal{E} \oplus \mathcal{F}$ is
\begin{enumerate}
\renewcommand{\labelenumi}{\roman{enumi})}
\item $ \mathcal{E}^{(1)} $ if $ P(\mathcal{E}^{(1)}) \: r( \mathcal{F}^{(1)}) > P(\mathcal{F}^{(1)}) \: r( \mathcal{E}^{(1)})$,
\item $\mathcal{F}^{(1)} $ if $P(\mathcal{E}^{(1)}) \: r( \mathcal{F}^{(1)} ) < P(\mathcal{F}^{(1)}) \: r( \mathcal{E}^{(1)})$,
\item $\mathcal{E}^{(1)}\oplus \mathcal{F}^{(1)} $ if $ P(\mathcal{E}^{(1)}) \: r( \mathcal{F}^{(1)}) = P(\mathcal{F}^{(1)}) \: r(\mathcal{E}^{(1)})$.
\end{enumerate}
\end{lemma}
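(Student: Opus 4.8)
The plan is to prove the statement by a direct comparison of reduced Hilbert polynomials, using the characterisation of the maximal destabilising subsheaf as the (unique) nonzero subsheaf whose reduced Hilbert polynomial is maximal among all subsheaves, and which is itself semistable of that reduced Hilbert polynomial. First I would recall that for a pure sheaf $\mathcal{G}$ the maximal destabilising subsheaf $\mathcal{G}^{(1)}$ is uniquely determined by two properties: it is semistable, and its reduced Hilbert polynomial $P(\mathcal{G}^{(1)})/r(\mathcal{G}^{(1)})$ is strictly greater than that of any subsheaf not contained in it (equivalently, it has the maximal reduced Hilbert polynomial and is maximal with respect to inclusion among subsheaves attaining that value). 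Since we are comparing reduced Hilbert polynomials with the lexicographic ordering and all sheaves here have the same dimension $e$, the quantities $P(\mathcal{E}^{(1)})/r(\mathcal{E}^{(1)})$ and $P(\mathcal{F}^{(1)})/r(\mathcal{F}^{(1)})$ are comparable, and the three cases in the statement are exactly the trichotomy coming from comparing these two reduced Hilbert polynomials (cleared of denominators as $P(\mathcal{E}^{(1)})\,r(\mathcal{F}^{(1)})$ versus $P(\mathcal{F}^{(1)})\,r(\mathcal{E}^{(1)})$, since $r(\mathcal{E}^{(1)}),r(\mathcal{F}^{(1)})>0$).

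The key structural fact I would use is that every subsheaf $\mathcal{G}'$ of $\mathcal{E}\oplus\mathcal{F}$ has reduced Hilbert polynomial bounded above by $\max\{P(\mathcal{E}^{(1)})/r(\mathcal{E}^{(1)}),\,P(\mathcal{F}^{(1)})/r(\mathcal{F}^{(1)})\}$. To see this, given a subsheaf $\mathcal{G}'\subseteq \mathcal{E}\oplus\mathcal{F}$, consider the two projections $\pi_{\mathcal{E}},\pi_{\mathcal{F}}$; then $\mathcal{G}'$ fits in a short exact sequence $0\to \mathcal{G}'\cap\mathcal{E}\to \mathcal{G}'\to \pi_{\mathcal{F}}(\mathcal{G}')\to 0$ with $\mathcal{G}'\cap\mathcal{E}\subseteq\mathcal{E}$ and $\pi_{\mathcal{F}}(\mathcal{G}')\subseteq\mathcal{F}$. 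Since $\mathcal{E}^{(1)}$ and $\mathcal{F}^{(1)}$ are the maximal-reduced-Hilbert-polynomial subsheaves of $\mathcal{E}$ and $\mathcal{F}$ respectively, the two outer terms have reduced Hilbert polynomials at most those of $\mathcal{E}^{(1)}$ and $\mathcal{F}^{(1)}$, and a standard seesaw argument (the reduced Hilbert polynomial of an extension lies between those of the sub and quotient in the lexicographic order, after weighting by multiplicity) gives the bound on $\mathcal{G}'$. This is the step I expect to be the main obstacle: one must handle the multiplicity weighting carefully, since reduced Hilbert polynomials do not simply add, and verify that equality in the bound forces the relevant projections and intersections to be semistable of the extremal reduced Hilbert polynomial.

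With this bound in hand the three cases follow quickly. In case (i), where $P(\mathcal{E}^{(1)})/r(\mathcal{E}^{(1)})>P(\mathcal{F}^{(1)})/r(\mathcal{F}^{(1)})$, the sheaf $\mathcal{E}^{(1)}\subseteq \mathcal{E}\oplus\mathcal{F}$ is semistable, attains the strictly largest reduced Hilbert polynomial among all subsheaves, and any subsheaf with the same reduced Hilbert polynomial must project trivially to $\mathcal{F}$ and hence lie in $\mathcal{E}$, forcing it into $\mathcal{E}^{(1)}$ by maximality there; thus $\mathcal{E}^{(1)}$ is the maximal destabilising subsheaf. Case (ii) is symmetric. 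In case (iii), where the two reduced Hilbert polynomials are equal, $\mathcal{E}^{(1)}\oplus\mathcal{F}^{(1)}$ is a direct sum of two semistable sheaves of the same reduced Hilbert polynomial and is therefore itself semistable with that reduced Hilbert polynomial; it attains the maximal reduced Hilbert polynomial, and I would check that it is maximal with respect to inclusion among subsheaves attaining that value, again by pushing an arbitrary such subsheaf through the projections and invoking the maximality and semistability of $\mathcal{E}^{(1)}$ and $\mathcal{F}^{(1)}$. This identifies it as the maximal destabilising subsheaf and completes the proof.
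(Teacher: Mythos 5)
Your proof is correct, but it takes a genuinely different route from the paper's. The paper argues case (i) (and leaves (ii), (iii) as ``similar standard arguments'') by first replacing any putative subsheaf $\mathcal{G} \subseteq \mathcal{E}\oplus\mathcal{F}$ of larger reduced Hilbert polynomial by its maximal destabilising subsheaf, so that $\mathcal{G}$ may be assumed semistable, and then invoking Hom-vanishing: since any nonzero map $\mathcal{G}\to\mathcal{E}$ would have image a quotient of the semistable $\mathcal{G}$ (reduced Hilbert polynomial at least that of $\mathcal{G}$) sitting inside $\mathcal{E}$ (reduced Hilbert polynomial at most that of $\mathcal{E}^{(1)}$), the composition $\mathcal{G}\hookrightarrow\mathcal{E}\oplus\mathcal{F}\to\mathcal{E}$ vanishes, forcing $\mathcal{G}\subseteq\mathcal{F}$ and contradicting maximality of $\mathcal{F}^{(1)}$; the same device pushes a subsheaf with reduced Hilbert polynomial equal to that of $\mathcal{E}^{(1)}$ into $\mathcal{E}$ and hence into $\mathcal{E}^{(1)}$. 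You instead bound \emph{all} subsheaves at once via the exact sequence $0\to\mathcal{G}'\cap\mathcal{E}\to\mathcal{G}'\to\pi_{\mathcal{F}}(\mathcal{G}')\to 0$ and the mediant (seesaw) inequality, which is sound: Hilbert polynomials and multiplicities are additive on this sequence, every term is zero or pure of dimension $e$, and if $P_1 \leq r_1 p$ and $P_2 \leq r_2 p$ in the lexicographic order then $P_1 + P_2 \leq (r_1 + r_2)p$, with equality only if it holds in each term; the equality analysis then yields the required containments, including $\mathcal{G}' \subseteq \pi_{\mathcal{E}}(\mathcal{G}')\oplus\pi_{\mathcal{F}}(\mathcal{G}') \subseteq \mathcal{E}^{(1)}\oplus\mathcal{F}^{(1)}$ in case (iii). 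What each approach buys: the paper's Hom-vanishing argument is shorter for case (i) but needs the preliminary semistable-reduction step and never writes out case (iii); your argument treats all three cases uniformly, gets semistability of $\mathcal{E}^{(1)}\oplus\mathcal{F}^{(1)}$ essentially for free from the same bound, and uses only additivity together with the standard fact that the maximal destabilising subsheaf contains every subsheaf attaining the maximal reduced Hilbert polynomial. Two small points to make explicit in a final write-up: handle the degenerate cases $\mathcal{G}'\cap\mathcal{E}=0$ and $\pi_{\mathcal{F}}(\mathcal{G}')=0$ separately before applying the mediant inequality, and note that in case (i) the strict inequality $P(\mathcal{F}^{(1)})\,r(\mathcal{E}^{(1)}) < P(\mathcal{E}^{(1)})\,r(\mathcal{F}^{(1)})$ is what forces $\pi_{\mathcal{F}}(\mathcal{G}')=0$ for any subsheaf attaining the maximum.
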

\begin{proof}
Suppose $P(\mathcal{E}^{(1)}) \: r( \mathcal{F}^{(1)}) > P(\mathcal{F}^{(1)}) \: r( \mathcal{E}^{(1)}) $; then we need to show $\mathcal{E}^{(1)}$ is the maximal destabilising subsheaf of $\mathcal{E} \oplus \mathcal{F}$. We know $\mathcal{E}^{(1)}$ is semistable and we also claim that there is no sheaf $\mathcal{G} \subset \mathcal{E} \oplus \mathcal{F}$ with reduced Hilbert polynomial great than $\mathcal{E}^{(1)}$. To prove this suppose such a sheaf $\mathcal{G}$ exists; then we may assume without loss of generality that $\mathcal{G}$ is semistable. As $\mathrm{Hom}(\mathcal{G}, \mathcal{E}) = 0$, the composition
\[ \mathcal{G} \hookrightarrow \mathcal{E} \oplus \mathcal{F} \rightarrow \mathcal{E} \]
is zero and so $\mathcal{G}$ is contained completely in $\mathcal{F}$. This contradicts the fact that $\mathcal{F}^{(1)}$ is the maximal destabilising subsheaf in $\mathcal{F}$.

Now suppose there is $\mathcal{E}^{(1)} \subsetneq \mathcal{G} \subset \mathcal{E} \oplus \mathcal{F}$ such that $\mathcal{G}$ and $\mathcal{E}^{(1)}$ have the same reduced Hilbert polynomial. Then the composition
\[ \mathcal{G} \hookrightarrow \mathcal{E} \oplus \mathcal{F} \rightarrow \mathcal{F} \]
is zero and so $\mathcal{G} $ is contained in $\mathcal{E}$ which contradicts the fact that $\mathcal{E}^{(1)}$ is the maximal destabilising subsheaf in $\mathcal{E}$. Therefore $\mathcal{E}^{(1)}$ is the maximal destabilising subsheaf of the direct sum.

The other cases follow from similar standard arguments and will be omitted.
\end{proof}

\begin{defn}
We say a sheaf $\mathcal{F}$ is $\tau$-compatible if it has a filtration 
\[ 0 \subseteq \mathcal{F}^{(1)} \subseteq \cdots \subseteq \mathcal{F}^{(s)}=\mathcal{F}\]
such that $\mathcal{F}_i = \mathcal{F}^{(i)}  / \mathcal{F}^{(i-1)} $, if nonzero, is semistable with reduced Hilbert polynomial $P_i/r_i$
where $\tau = (P_1,\ldots,P_s)$. We call such a filtration a generalised Harder--Narasimhan filtration of $\mathcal{F}$; it is the same as the Harder--Narasimhan filtration of $\mathcal{F}$ except that we may have
$\mathcal{F}^{(i)}  = \mathcal{F}^{(i-1)} $ for some $i$. Note that the generalised Harder--Narasimhan filtration of a $\tau$-compatible sheaf $\mathcal{F}$ is uniquely determined by $\mathcal{F}$ and $\tau$.
\end{defn}

Of course any sheaf of Harder--Narasimhan type $\tau$ is $\tau$-compatible.

\begin{defn} \label{defthetastability}
A $\tau$-compatible sheaf $\mathcal{F}$ is $\theta$-semistable if for all proper nonzero $\tau$-compatible subsheaves $\mathcal{F}' \subset \mathcal{F}$ for which $\mathcal{F}/\mathcal{F}'$ is also $\tau$-compatible we have
\[ \frac{\sum_{i=1}^s \theta_i P(\mathcal{F}'_i)}{P(\mathcal{F}')} \geq \frac{\sum_{i=1}^s \theta_i P(\mathcal{F}_i)}{P(\mathcal{F})} \]
where $\mathcal{F}'_i$ and $\mathcal{F}_i$ denote the successive quotients appearing in the generalised Harder--Narasimhan filtrations of  $\mathcal{F}'$
 and $\mathcal{F}$. We say $\mathcal{F}$ is $\theta$-stable if this inequality is strict for all such subsheaves.
\end{defn}

\begin{rmk}\label{assum on theta}
To get a nontrivial notion of semistability we will always assume that the $\theta_i$ are not all equal to each other. In addition, we will usually assume that for all $m >\!> n >\!> 0$
\begin{equation} \frac{\sum \theta_i P_i(n)}{P(n)} \geq \frac{\sum \theta_i P_i(m)}{P(m)}.\label{cross} \end{equation}
If (\ref{cross}) does not hold we can still define $\theta$-(semi)stability but
there will be no $\theta$-semistable sheaves with Harder--Narasimhan type $\tau$.
\end{rmk}

\subsection{Families and the moduli functor}

Let $S$ be a complex scheme, and recall that a flat family of sheaves over $W$ parametrised by $S$ is a sheaf $\mathcal{V}$ over $W \times S$ which is flat over $S$. We say this is a flat family of semistable sheaves which are pure of dimension $e$ with Hilbert polynomial $P$ if for each point $s \in S$ the sheaf $\mathcal{V}_s:= \mathcal{V}|_{W \times \{ s \}}$ is   a semistable pure 
sheaf of dimension $e$ with Hilbert polynomial $P$. We say two flat families $\mathcal{V}$ and $\mathcal{W}$ over $W$ parametrised by $S$ are isomorphic if there is a line bundle $L$ on $S$ such that $\mathcal{V} \cong \mathcal{W} \otimes \pi_S^*L$
where $\pi_S:W \times S \to S$ is the projection. Given a morphism $f: T \rightarrow S$ we can pull back a family on $S$ to a family on $T$ in the standard way.

\begin{defn}
Let $\tau=(P_1, \dots, P_s)$ be a Harder--Narasimhan type of a pure sheaf of dimension $e$. A flat family $\mathcal{V}$ of sheaves over $W$ parametrised by $S$ has Harder--Narasimhan type $\tau$ if $\mathcal{V}$ is a family of pure sheaves of dimension $e$ with Hilbert polynomial $\sum_{i=1}^m P_i$ and there is a filtration by subsheaves
\[ 0 \subsetneq \mathcal{V}^{(1)} \subsetneq \dots \subsetneq \mathcal{V}^{(s)}= \mathcal{V} \]
such that $\mathcal{V}_i =\mathcal{V}^{(i)}/\mathcal{V}^{(i-1)}$ is a flat family of semistable sheaves of pure of dimension $e$ with Hilbert polynomial $P_i$.

Let $n$ be a positive integer. A flat family $\mathcal{V}$ of $n$-rigidified sheaves of Harder--Narasimhan type $\tau$ over $W$ parametrised by $S$ is a flat family $\mathcal{V}$ of sheaves of Harder--Narasimhan type $\tau$ parametrised by $S$ which has an $n$-rigidification; i.e., an isomorphism
\[ H^0(\mathcal{V}(n)) \cong \oplus_{i=1}^s H^0(\mathcal{V}_i(n)) \]
which is compatible with the inclusion morphisms $\mathcal{V}^{(i)} \hookrightarrow \mathcal{V}$ and projection morphisms $\mathcal{V}^{(i)} \rightarrow \mathcal{V}_i$ in the sense of Definition \ref{n-rig def}. 

Finally, we say such a family is $\theta$-semistable if for each $s \in S$ the sheaf $\mathcal{V}_s$ is $\theta$-semistable.
\end{defn}

\begin{lemma}\label{univ fam}
There exists a flat family $\mathcal{V}$ of $n$-rigidified sheaves of Harder--Narasimhan type $\tau$ over $W$ parametrised by $Y_{(\tau)}^{ss}$ which is given by restricting the universal quotient sheaf $\mathcal{U}$ on $\mathrm{Quot}(V \otimes \mathcal{O}(-n),P) \times W$ to $Y_{(\tau)}^{ss} \times W$.
\end{lemma}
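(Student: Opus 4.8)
The plan is to take $\mathcal{V} := \mathcal{U}|_{Y_{(\tau)}^{ss} \times W}$ and verify the three requirements of the definition: flatness, the existence of a filtration whose graded pieces are flat families of semistable sheaves with Hilbert polynomials $P_1,\dots,P_s$, and the existence of an $n$-rigidification. Write $\pi$ and $\pi_W$ for the two projections from $Y_{(\tau)}^{ss} \times W$. Flatness of $\mathcal{V}$ over $Y_{(\tau)}^{ss}$ is immediate, since the universal quotient $\mathcal{U}$ is flat over the quot scheme and flatness is preserved under the base change along $Y_{(\tau)}^{ss} \times W \hookrightarrow \mathrm{Quot}(V \otimes \mathcal{O}(-n),P) \times W$. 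Moreover for each $\rho : V \otimes \mathcal{O}(-n) \to \mathcal{F}$ in $Y_{(\tau)}^{ss}$ the fibre $\mathcal{V}_\rho$ is $\mathcal{F}$, which is pure of dimension $e$ with Hilbert polynomial $P$ and Harder--Narasimhan type $\tau$, because $Y_{(\tau)}^{ss} \subseteq R_\tau$.

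The heart of the argument is the construction of the relative filtration. Recall from Proposition \ref{strats agree} that $\rho$ lies in $Y_{(\tau)}^{ss}$ exactly when the fixed flag $V^{(i)} = V_1 \oplus \cdots \oplus V_i$ satisfies $V^{(i)} = H^0(\rho(n))^{-1}(H^0(\mathcal{F}^{(i)}(n)))$, so that $\rho(V^{(i)} \otimes \mathcal{O}(-n)) = \mathcal{F}^{(i)}$ is the $i$th step of the Harder--Narasimhan filtration of $\mathcal{F}$. I would therefore set $\mathcal{V}^{(i)}$ to be the image of the constant subsheaf $V^{(i)} \otimes \pi_W^*\mathcal{O}(-n)$ under the universal quotient $V \otimes \pi_W^*\mathcal{O}(-n) \to \mathcal{V}$. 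The cokernel $\mathcal{V}/\mathcal{V}^{(i)}$ is then a quotient of the flat family $\mathcal{V}$ whose fibre at $\rho$ is $\mathcal{F}/\mathcal{F}^{(i)}$, of constant Hilbert polynomial $P - (P_1 + \cdots + P_i)$; constancy of this Hilbert polynomial over the locus $Y_{(\tau)}^{ss}$ of fixed Harder--Narasimhan type is precisely what makes the relative Harder--Narasimhan filtration exist (cf. \cite{huybrechts} $\S$2.3), so $\mathcal{V}/\mathcal{V}^{(i)}$ is flat and the sequence $0 \to \mathcal{V}^{(i)} \to \mathcal{V} \to \mathcal{V}/\mathcal{V}^{(i)} \to 0$ stays exact after restriction to any fibre, giving both $\mathcal{V}^{(i)}_\rho = \mathcal{F}^{(i)}$ and flatness of $\mathcal{V}^{(i)}$. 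Applying the same criterion to $\mathcal{V}_i := \mathcal{V}^{(i)}/\mathcal{V}^{(i-1)}$, a quotient of the flat family $\mathcal{V}^{(i)}$ whose fibres $\mathcal{F}_i = \mathcal{F}^{(i)}/\mathcal{F}^{(i-1)}$ are semistable of constant Hilbert polynomial $P_i$, produces the required filtration $0 \subsetneq \mathcal{V}^{(1)} \subsetneq \cdots \subsetneq \mathcal{V}^{(s)} = \mathcal{V}$ with flat, fibrewise-semistable graded pieces $\mathcal{V}_i$.

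For the $n$-rigidification I would use that $n$ was chosen in $\S$\ref{refined strat} so that $\mathcal{F}$ and all the $\mathcal{F}^{(i)}$ and $\mathcal{F}_i$ are $n$-regular; hence $\mathcal{V}$, $\mathcal{V}^{(i)}$ and $\mathcal{V}_i$ are fibrewise $n$-regular, so the relative pushforwards $H^0(\mathcal{V}(n))$, $H^0(\mathcal{V}^{(i)}(n))$ and $H^0(\mathcal{V}_i(n))$ (that is, $\pi_*(- \otimes \pi_W^*\mathcal{O}(n))$) are locally free and commute with base change. The universal quotient then induces morphisms $V^{(i)} \otimes \mathcal{O}_{Y_{(\tau)}^{ss}} \to H^0(\mathcal{V}^{(i)}(n))$ and $V_i \otimes \mathcal{O}_{Y_{(\tau)}^{ss}} \to H^0(\mathcal{V}_i(n))$ which are isomorphisms on every fibre, by the definition of $Q$ and of $V^{(i)}$ together with $n$-regularity and the equalities $\dim V^{(i)} = \sum_{j \leq i} P_j(n)$ and $\dim V_i = P_i(n)$, and hence isomorphisms of locally free sheaves. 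The fixed splitting $V = V_1 \oplus \cdots \oplus V_s$ now yields an isomorphism $H^0(\mathcal{V}(n)) \cong \oplus_{i=1}^s H^0(\mathcal{V}_i(n))$ which, being induced by the fixed flag and splitting of $V$, is compatible with the inclusions $\mathcal{V}^{(i)} \hookrightarrow \mathcal{V}$ and projections $\mathcal{V}^{(i)} \to \mathcal{V}_i$ in the sense of Definition \ref{n-rig def}, and which restricts on each fibre to the rigidification of Remark \ref{n-rigs exist}.

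The main obstacle is the middle step: promoting the fibrewise Harder--Narasimhan filtration to a filtration of $\mathcal{V}$ by flat subsheaves that commutes with base change. In general the image of $V^{(i)} \otimes \mathcal{O}(-n)$ in a family need not be flat, and the fibrewise Harder--Narasimhan filtration exists only after passing to a flattening stratification. What rescues the situation here is exactly the defining property of $Y_{(\tau)}^{ss}$: it is cut out as the locus on which the single fixed flag $V^{(1)} \subset \cdots \subset V^{(s)} = V$ induces the Harder--Narasimhan filtration, so the Hilbert polynomials of the $\mathcal{F}^{(i)}$ are constant in the family and the relevant flattening stratum is all of $Y_{(\tau)}^{ss}$. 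Once this constancy is in hand the flatness assertions are routine, and flatness of $\mathcal{V}$ itself and the construction of the rigidification present no further difficulty.
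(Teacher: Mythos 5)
Your proposal is correct and follows essentially the same route as the paper, whose own proof is a two-line sketch: induce the universal Harder--Narasimhan filtration from the fixed flag $0 \subset V^{(1)} \subset \cdots \subset V^{(s)} = V$ associated to $\beta(\tau)$, and obtain the universal $n$-rigidification from the eigenspace decomposition $V = \oplus_{i=1}^s V_i$. The flatness, base-change and $n$-regularity verifications you supply (constancy of the fibrewise Hilbert polynomials of $\mathcal{V}^{(i)}$ and $\mathcal{V}_i$ over $Y_{(\tau)}^{ss}$, and local freeness of the pushforwards $\pi_*(\mathcal{V}_i \otimes \pi_W^*\mathcal{O}(n))$) are exactly the details the paper leaves implicit.
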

\begin{proof}
We use the vector space filtration $0 \subset V^{(1)} \subset \cdots \subset V^{(s)} = V$ corresponding to $\beta=\beta(\tau)$, defined as in $\S$7, to induce a universal Harder--Narasimhan filtration for $\mathcal{V}$. Then a universal $n$-rigidification comes from the eigenspace decomposition $V = \oplus_{i=1}^s V_i$ for $\beta$.
\end{proof}

\begin{defn} The moduli functor of $\theta$-semistable $n$-rigidified sheaves over $W$ of Harder--Narasimhan type $\tau$ is the contravariant functor $\mathcal{M}^{\theta-ss}(W, \tau, n)$ from complex schemes to sets such that
if $S$ is a scheme over $\mathbb{C}$ then
$\mathcal{M}^{\theta-ss}(W, \tau, n) (S)$ is the set of isomorphism classes of families of $\theta$-semistable $n$-rigidified sheaves over $W$ parametrised by $S$ with Harder--Narasimhan type $\tau$.
\end{defn}

\subsection{Boundedness}

By Lemma \ref{HN bdd} if $n$ is sufficiently large then all sheaves with Hilbert polynomial $P$ and Harder--Narasimhan type $\tau$ are $n$-regular and  the successive quotients appearing in their Harder--Narasimhan filtrations are $n$-regular. A similar argument gives us

\begin{lemma}\label{subsheaves bdd}
Fix a Harder--Narasimhan type $\tau = (P_1, \dots, P_s)$. Then for $n$ sufficiently large every $\tau$-compatible subsheaf $\mathcal{F}' \subset \mathcal{F}$ of a sheaf with Harder--Narasimhan type $\tau$ is $n$-regular. Moreover, the successive quotients $\mathcal{F}'_i$ appearing in the generalised Harder--Narasimhan filtration of $\mathcal{F}'$ are also $n$-regular. 
\end{lemma}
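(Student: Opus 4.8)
The plan is to mimic the proof of Lemma \ref{HN bdd}: I will show that the collection of all $\tau$-compatible subsheaves $\mathcal{F}' \subset \mathcal{F}$, as $\mathcal{F}$ ranges over all sheaves of Harder--Narasimhan type $\tau$, forms a bounded family, and likewise for the collection of their generalised Harder--Narasimhan quotients $\mathcal{F}'_i$. Boundedness then yields a single $n$ for which every member of each family is $n$-regular, exactly as boundedness was used after Lemma \ref{HN bdd}. The tool for boundedness is again Simpson's criterion (\cite{simpson} Theorem 1.1): a set of pure sheaves of dimension $e$ with a fixed Hilbert polynomial is bounded provided the slopes of all their subsheaves are bounded above by a fixed constant.

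The first key step is to check that only finitely many Hilbert polynomials occur among the $\mathcal{F}'$ and the $\mathcal{F}'_i$, so that Simpson's (fixed Hilbert polynomial) criterion can be applied finitely many times and the resulting bounded families combined. Write $r = r(\mathcal{F})$ for the (fixed) multiplicity of a sheaf of type $\tau$. Since $\mathcal{F}' \subset \mathcal{F}$ with $\mathcal{F}$ pure of dimension $e$, additivity of the Hilbert polynomial in the sequence $0 \to \mathcal{F}' \to \mathcal{F} \to \mathcal{F}/\mathcal{F}' \to 0$, together with the nonnegativity of the leading coefficient of $P(\mathcal{F}/\mathcal{F}')$, gives $r(\mathcal{F}') \leq r$. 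The generalised Harder--Narasimhan filtration of $\mathcal{F}'$ has quotients $\mathcal{F}'_i$ which are either zero or semistable with reduced Hilbert polynomial $P_i/r_i$, and multiplicity is additive over this filtration, so $\sum_i r(\mathcal{F}'_i) = r(\mathcal{F}') \leq r$ with each $r(\mathcal{F}'_i) \geq 0$. As each reduced Hilbert polynomial $P_i/r_i$ is fixed, $P(\mathcal{F}'_i) = r(\mathcal{F}'_i)\,(P_i/r_i)$ ranges over a finite set, and hence so does $P(\mathcal{F}') = \sum_i P(\mathcal{F}'_i)$.

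The second key step is the slope bound. For a fixed Hilbert polynomial $P'$, consider the $\mathcal{F}'$ with $P(\mathcal{F}') = P'$; these are pure of dimension $e$, being nonzero subsheaves of the pure sheaf $\mathcal{F}$. Any subsheaf $\mathcal{G}$ of such an $\mathcal{F}'$ is also a subsheaf of $\mathcal{F}$, and so its reduced Hilbert polynomial is at most that of the maximal destabilising subsheaf $\mathcal{F}^{(1)}$, whose slope $\mu_1$ depends only on $P_1$; thus the slopes of all subsheaves of the $\mathcal{F}'$ are bounded above by $\mu_1$, exactly as in the proof of Lemma \ref{HN bdd}. Simpson's criterion therefore gives boundedness for each of the finitely many $P'$, and a finite union of bounded families is bounded. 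For the quotients $\mathcal{F}'_i$ boundedness is even more direct: they are semistable sheaves of dimension $e$ whose Hilbert polynomials lie in a finite set, so they are bounded by Simpson's boundedness of semistable sheaves. Finally, a bounded family of sheaves is $n$-regular for all sufficiently large $n$, which supplies the desired uniform $n$ for both $\mathcal{F}'$ and $\mathcal{F}'_i$.

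I expect the main point requiring care to be the first step---verifying that the generalised Harder--Narasimhan quotients $\mathcal{F}'_i$ of the varying subsheaves $\mathcal{F}'$ have multiplicities bounded independently of $\mathcal{F}$ and $\mathcal{F}'$---since this is what reduces an a priori infinite collection of Hilbert polynomials to a finite one and makes Simpson's fixed-Hilbert-polynomial criterion applicable.
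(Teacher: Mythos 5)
Your proposal is correct and is essentially the argument the paper intends: it states this lemma with only the remark ``A similar argument gives us,'' referring back to Lemma \ref{HN bdd}, and your write-up supplies exactly that argument --- Simpson's boundedness criterion applied after reducing to finitely many Hilbert polynomials (the finiteness observation via $\sum_i r(\mathcal{F}'_i) = r(\mathcal{F}') \leq r$ is the same one the paper makes explicit in the proof of Lemma \ref{inequality for n}), with the slope bound coming from the maximal destabilising subsheaf as in Lemma \ref{HN bdd}. Your closing worry is well placed but resolved by your own argument, since multiplicities of pure dimension-$e$ sheaves are positive integers, so the $r(\mathcal{F}'_i)$ indeed range over a finite set.
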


We also have

\begin{lemma}\label{inequality for n}
Fix a Harder--Narasimhan type $\tau = (P_1, \dots, P_s)$. If $n$ is
sufficiently large, then for any $\tau$-compatible subsheaf $\mathcal{F}' \subset \mathcal{F}$ of a sheaf with Harder--Narasimhan type $\tau$ the following inequalities are equivalent:
\[ \frac{\sum \theta_i P(\mathcal{F}'_i)}{P(\mathcal{F}')} \geq \frac{\sum \theta_i P(\mathcal{F}_i)}{P(\mathcal{F})} \iff \frac{\sum \theta_i P(\mathcal{F}'_i,n)}{P(\mathcal{F}',n)} \geq \frac{\sum \theta_i P(\mathcal{F}_i,n)}{P(\mathcal{F},n)} \]
where  $\mathcal{F}'_i$ and $\mathcal{F}_i$ are the successive quotients
 in the generalised Harder--Narasimhan filtrations of  $\mathcal{F}'$
 and $\mathcal{F}$. 
\end{lemma}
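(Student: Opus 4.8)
The plan is to clear denominators and reduce both inequalities to a single statement about the sign of one polynomial, and then to use boundedness to make the choice of $n$ uniform over all subsheaves. Write $P(x) = \sum_i P_i(x)$ and $\Theta(x) = \sum_i \theta_i P_i(x)$; these are fixed polynomials determined by $\tau$ and $\theta$, since $\mathcal{F}$ has Harder--Narasimhan type $\tau$ and so $P(\mathcal{F}_i) = P_i$ and $P(\mathcal{F}) = P$. For a $\tau$-compatible subsheaf $\mathcal{F}' \subset \mathcal{F}$ I would set $p'(x) = P(\mathcal{F}')$ and $\Theta'(x) = \sum_i \theta_i P(\mathcal{F}'_i)$, and define
\[ D_{\mathcal{F}'}(x) := \Theta'(x) P(x) - \Theta(x) p'(x). \]
For $n$ sufficiently large Lemma \ref{subsheaves bdd} guarantees that $\mathcal{F}'$ and all the $\mathcal{F}'_i$ are $n$-regular, so $p'(n) = P(\mathcal{F}',n) = \dim H^0(\mathcal{F}'(n)) > 0$ and likewise $P(n) > 0$. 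Clearing these positive denominators, the right-hand inequality in the statement becomes exactly $D_{\mathcal{F}'}(n) \geq 0$. On the other hand, since $P$ and $p'$ have positive leading coefficients, comparing the rational functions $\Theta'/p'$ and $\Theta/P$ in the sense of the coefficient-lexicographic ordering (equivalently, for large values of the argument) is the same as requiring $D_{\mathcal{F}'}$ to be eventually nonnegative, i.e. to have nonnegative leading coefficient or to vanish identically. Thus it remains only to produce a single $n$, chosen uniformly in $\mathcal{F}'$, for which $D_{\mathcal{F}'}(n) \geq 0$ if and only if $D_{\mathcal{F}'}$ is eventually nonnegative.

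The key input will be boundedness. By Lemma \ref{HN bdd} together with Lemma \ref{subsheaves bdd}, the family of all $\tau$-compatible subsheaves $\mathcal{F}'$ of sheaves of Harder--Narasimhan type $\tau$ is bounded, and hence only finitely many Hilbert polynomials $P(\mathcal{F}')$ and $P(\mathcal{F}'_i)$ can occur. Since $P$ and $\Theta$ are fixed, the polynomial $D_{\mathcal{F}'}$ therefore ranges over a finite set $\mathcal{D}$ as $\mathcal{F}'$ varies. For each nonzero $D \in \mathcal{D}$ there is a largest real root, beyond which $D$ takes constant sign equal to that of its leading coefficient; choosing $n$ larger than all these roots (and large enough for the $n$-regularity used above) I obtain, for every $D \in \mathcal{D}$, that $D(n) \geq 0$ precisely when $D$ is eventually nonnegative, the identically-zero case being trivial. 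Applying this with $D = D_{\mathcal{F}'}$ gives the asserted equivalence simultaneously for all $\mathcal{F}'$.

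The only genuine obstacle is the uniformity of the threshold $n$: for any single fixed $\mathcal{F}'$ the equivalence is just the elementary fact that the sign of a polynomial at a large argument matches the sign of its leading coefficient, but a priori the required bound could depend on $\mathcal{F}'$, of which there are infinitely many. This is exactly what the boundedness results remove, by collapsing the infinitely many subsheaves into the finite set $\mathcal{D}$ of polynomials $D_{\mathcal{F}'}$; once finiteness is established the threshold may be taken to be the maximum over $\mathcal{D}$, and no further estimates are needed.
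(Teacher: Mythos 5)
Your overall strategy is the same as the paper's: reduce both inequalities to the sign of the single difference polynomial $D_{\mathcal{F}'} = \Theta' P - \Theta\, p'$ (evaluated at $n$ on one side, eventually/lexicographically on the other — an equivalence that is indeed valid since $p'$ and $P$ have positive leading coefficients), and then obtain a threshold for $n$ that is uniform in $\mathcal{F}'$ by showing that $D_{\mathcal{F}'}$ ranges over a finite set. The one step where your argument is not watertight as written is the finiteness step, and it is precisely here that the paper argues differently. The paper does not pass through boundedness at all: since each successive quotient $\mathcal{F}'_i$ in the generalised Harder--Narasimhan filtration is semistable with reduced Hilbert polynomial equal to the \emph{fixed} polynomial $P_i/r_i$, one has $P(\mathcal{F}'_i) = r'_i \cdot P_i/r_i$ where the multiplicity $r'_i$ is a nonnegative integer bounded above (e.g.\ by $r(\mathcal{F})$), so only finitely many Hilbert polynomials $P(\mathcal{F}'_i)$, and hence $P(\mathcal{F}') = \sum_i P(\mathcal{F}'_i)$, can occur. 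This is a one-line observation requiring no boundedness input.

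By contrast, your citation of Lemma \ref{HN bdd} together with Lemma \ref{subsheaves bdd} for ``the family of all $\tau$-compatible subsheaves is bounded'' does not literally deliver what you need: Lemma \ref{HN bdd} concerns the ambient sheaves of type $\tau$, not their subsheaves, and Lemma \ref{subsheaves bdd} asserts uniform $n$-regularity, which by itself does not yield finitely many Hilbert polynomials. Worse, the boundedness criterion underlying these lemmas (Simpson's Theorem 1.1) takes a fixed Hilbert polynomial as a hypothesis, so invoking boundedness in order to deduce finiteness of the Hilbert polynomials risks circularity. The gap is easily repaired within your framework: either substitute the multiplicity observation above, or note that uniform $n$-regularity combined with the inclusions $H^0(\mathcal{F}'(n')) \subseteq H^0(\mathcal{F}(n'))$ for $n' = n, n+1, \dots, n+e$ bounds the values of the degree-$\leq e$ polynomial $P(\mathcal{F}')$ at $e+1$ points by $P(n')$, again leaving only finitely many possibilities. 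With either fix your proof is correct and is essentially the paper's argument, with your root-bounding bookkeeping making explicit what the paper compresses into ``thus the inequalities are equivalent for all sufficiently large $n$.''
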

\begin{proof}
The Hilbert polynomials of $\mathcal{F}$ and $\mathcal{F}_i$ are fixed, and 
the successive quotients $\mathcal{F}_i'$ are semistable with reduced Hilbert polynomial
\[ \frac{P(\mathcal{F}_i')}{r_i'} = \frac{P_i}{r_i}\]
where $r_i'$ denotes the multiplicity of $\mathcal{F}_i'$, so since there are only a finite number of possibilities for $r_i'$, there are only a finite number of possible Hilbert polynomials for $\mathcal{F}_i'$.
Thus the inequalities are equivalent for all sufficiently large $n$.
\end{proof}

\subsection{The choice of perturbed linearisation}\label{choice of pert linear}
Let $\theta = (\theta_1, \ldots,  \theta_s) \in \mathbb{Q}^s$ be
a stability parameter satisfying the condition (\ref{cross})
of Remark \ref{assum on theta}. Then
$\theta$ defines a perturbation of the canonical linearisation $\mathcal{L}_\beta$ in the following way. For any natural number $n$ we can define
\begin{equation} \label{beta'def} \beta_i' := \theta_i -\frac{\sum_{j=1}^s \theta_j P_j(n)}{P(n)} \end{equation}
and let $\beta' :=  i \mathrm{diag}(\beta_1', \cdots \beta_1', \cdots , \beta_s', \cdots \beta_s') \in \mathfrak{t}$
where $\beta_i'$ appears $P_i(n)$ times. Then \[\sum_{i=1}^s \beta'_i P_i(n) = 0\]
  and the assumption (\ref{cross}) on $\theta$ means that \[\beta' \cdot \beta =\sum_{i=1}^s \beta'_i \beta_i P_i(n) \geq 0.\]
For any small positive rational number $\epsilon$ consider the perturbation $\mathcal{L}_\beta^{\mathrm{per}}$ of the canonical $\mathrm{Stab} \beta$-linearisation $\mathcal{L}_\beta$ on $\overline{Y}_{(\tau)}$ given by twisting the original ample linearisation $\mathcal{L}$ on $\overline{Q}$ by the character $\chi_{-(\beta + \epsilon \beta')} : \mathrm{Stab} \beta \rightarrow \mathbb{C}^*$ corresponding to the rational weight $-(\beta + \epsilon \beta')$.
By Proposition \ref{small per refines strat} if $\epsilon >0 $ is sufficiently small then the stratification associated to the $\mathrm{Stab} \beta $-action on $\overline{Y}_{(\tau)}$ with respect to $\mathcal{L}_\beta^{\mathrm{per}}$ is a refinement of the stratification associated to the $\mathrm{Stab} \beta $-action on $\overline{Y}_{(\tau)}$ with respect to $\mathcal{L}_\beta$. We assume that $\epsilon > 0$ is sufficiently small for
this to be the case, and then since $\overline{Y}_{(\tau)}^{\mathrm{Stab} \beta -ss}(\mathcal{L}_\beta) =Y_{(\tau)}^{ss}$ it follows that 
\[{Y_{(\tau)}^{ss}}= \bigsqcup_{\gamma \in \mathcal{C}} S_\gamma^{(\beta)} \]
where $S_\gamma^{(\beta)}$ is a stratum appearing in the stratification for the perturbed linearisation,
and we have for each $\gamma \in \mathcal{C}$
\[ S_\gamma^{(\beta)} = G  Y_\gamma^{(\beta)-ss} \cong G \times_{P_\beta}
Y_\gamma^{(\beta)-ss}
\]
where $Y_\gamma^{(\beta)-ss} = (p_\gamma^{(\beta)})^{-1}(Z_\gamma^{(\beta)-ss})$
and $Y_\gamma^{(\beta)-ss}$ and $Z_\gamma^{(\beta)-ss}$ are the subschemes of $Y_{(\tau)}^{ss}$ defined following (\ref{zedbeta})
and (\ref{ybeta}).

A 1-PS $\lambda:\mathbb{C}^* \to \mathrm{Stab} \beta
\cong \mathrm{SL}(V) \cap \Pi \mathrm{GL}(V_i)$ of $\mathrm{Stab} \beta$ is given by 1-PSs $\lambda_i: \mathbb{C} \to \mathrm{GL}(V_i)$ for $i=1, \dots, s$ such that
\[ \prod_{i=1}^s \mathrm{det} \lambda_i(t) = 1 \]
for all $t \in \mathbb{C}^*$.
As in $\S$\ref{calc HM fn} we can diagonalise each 1-PS 
simultaneously to get weights $k_1 > \cdots > k_r$ and for each $i$ 
 a decomposition $V_i =V_i^1 \oplus \cdots \oplus V_i^r$ into weight spaces and 
a filtration
\[ 0 \subset V_i^{[1]} \subset \cdots \subset V_i^{[r]} = V_i\]
of $V_i$ where $V_i^{[j]}:= \oplus_{l \leq j} V_i^l$ such that
\[ \sum_{i=1}^s \sum_{j=1}^r k_j \mathrm{dim} V_i^j = 0.\]
There is an associated filtration 
\[ 0 \subset V^{[1]} \subset \cdots \subset V^{[r]}= V\]
of $V$ where $V^{[j]}:= \oplus_{i=1}^s V_i^{[j]}$ and we let $V^j:= V^{[j]}/V^{[j-1]}$.

Now suppose $\rho : V \otimes \mathcal{O}(-n) \to \mathcal{F}$ is a point in $Y_{(\tau)}^{ss}$ such that the limit $\overline{\rho} := \lim_{t \to 0} \lambda(t) \cdot \rho $ is also in $Y_{(\tau)}^{ss}$. Then the 1-PS $\lambda$ determines a filtration
\[ 0 \subset \mathcal{F}^{[1]} \subset \cdots \subset \mathcal{F}^{[r]}= \mathcal{F} \]
where $ H^0(\mathcal{F}^{[j]}(n)) =V^{[j]}$ and $\overline{\rho} = \oplus_{j=1}^r \rho^j$ where $\overline{\rho}^j : V^j \otimes \mathcal{O}(-n) \rightarrow \mathcal{F}^j:= \mathcal{F}^{[j]}/ \mathcal{F}^{[j-1]}$. As $\overline{\rho}$ is also a point in $Y_{(\tau)}^{ss}$, the sheaf $\overline{\mathcal{F}}:= \oplus_{j=1}^r \mathcal{F}^j$ has Harder--Narasimhan type $\tau$ and the filtration $0 \subset V^{(1)} \subset \cdots \subset V^{(s)} = V$ induces this filtration. In particular each direct summand $\mathcal{F}^j$ is $\tau$-compatible (see Lemma \ref{HN direct sum}) and has generalised Harder--Narasimhan filtration
\[  0 \subseteq \mathcal{F}^j_{(1)} \subseteq \cdots \subseteq   \mathcal{F}^j_{(s)} = \mathcal{F}^j .\]
We let $\mathcal{F}_i^j$ denote the successive quotients in this generalised Harder--Narasimhan filtration. 

\begin{lemma}\label{helpful lemma}
Suppose $m >\!> n>\!> 0$ and let $\lambda$ be a 1-PS of $\mathrm{Stab} \beta$ and $\rho : V \otimes \mathcal{O}(-n) \to \mathcal{F}$ be a point in $Y_{(\tau)}^{ss}$. If the limit $\overline{\rho} := \lim_{t \to 0} \lambda(t) \cdot \rho $ is also in $Y_{(\tau)}^{ss}$ then, using the above notation, we have
\begin{enumerate}
\renewcommand{\labelenumi}{\textrm{\roman{enumi})}} 
\item for $0 \leq l < j \leq r$ the quotient sheaf $ \mathcal{F}^{[j]}/\mathcal{F}^{[l]}$ is $\tau$-compatible with generalised Harder--Narasimhan filtration induced by that of $\mathcal{F}$;
\item the Hilbert--Mumford function is given by
\[ \mu^{\mathcal{L}_\beta^{\mathrm{per}}}(\rho, \lambda) = \epsilon \sum_{j=1}^r \sum_{i=1}^s  k_j   \beta_i'P(\mathcal{F}_i^j,n) . \]
\end{enumerate}
\end{lemma}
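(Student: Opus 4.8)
The plan is to exploit that $\lambda$ lies in $\mathrm{Stab}\beta = (\prod_i\mathrm{GL}(V_i))\cap\mathrm{SL}(V)$, so $\lambda$ preserves each factor $V_i$ of the decomposition $V=\bigoplus_i V_i$ determined by $\beta$. Consequently the weight filtration $0\subset V^{[1]}\subset\cdots\subset V^{[r]}=V$ of $\lambda$ is compatible with the filtration $0\subset V^{(1)}\subset\cdots\subset V^{(s)}=V$ coming from $\beta$: both are induced by the bigrading $V=\bigoplus_{i,j}V_i^j$, where $V_i^j$ is the weight-$k_j$ subspace of $V_i$. Since $n$ is large enough that all relevant sheaves are $n$-regular, applying $\rho$ and $H^0(-(n))$ transports this into a bigrading of $\mathcal{F}$; in particular $\dim V_i^j = P(\mathcal{F}_i^j,n)$, which is the identity driving both parts.

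For part (i) I would first record the consequence of the hypothesis $\overline\rho\in Y_{(\tau)}^{ss}$: by \cite{huybrechts} Lemma 4.4.3 the limit $\overline\rho$ represents $\overline{\mathcal{F}}=\bigoplus_j\mathcal{F}^j$, which therefore has Harder--Narasimhan type $\tau$, its Harder--Narasimhan filtration being the one induced by $V^{(\bullet)}$. Iterating Lemma \ref{HN direct sum} for the direct sum $\bigoplus_j\mathcal{F}^j$ shows that each summand $\mathcal{F}^j$ is $\tau$-compatible with generalised Harder--Narasimhan filtration induced by $V^{(\bullet)}$, and that for each $i$ the sheaf $\bigoplus_j\mathcal{F}_i^j$ is the $i$th Harder--Narasimhan quotient of $\overline{\mathcal{F}}$; hence every nonzero $\mathcal{F}_i^j$ is semistable with reduced Hilbert polynomial $P_i/r_i$. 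I would then equip $\mathcal{F}^{[j]}/\mathcal{F}^{[l]}$ with the filtration $G^{(i)} := ((\mathcal{F}^{(i)}\cap\mathcal{F}^{[j]})+\mathcal{F}^{[l]})/\mathcal{F}^{[l]}$ induced by the Harder--Narasimhan filtration of $\mathcal{F}$. Compatibility of the two filtrations identifies each graded piece $G^{(i)}/G^{(i-1)}$ as an iterated extension of the bigraded pieces $\mathcal{F}_i^{j'}$ with $l<j'\leq j$; since semistable sheaves of a fixed reduced Hilbert polynomial form an abelian category closed under extensions, $G^{(i)}/G^{(i-1)}$ is semistable with reduced Hilbert polynomial $P_i/r_i$. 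As these are strictly decreasing in $i$, the filtration $G^{(\bullet)}$ must be the (unique) generalised Harder--Narasimhan filtration, which proves $\tau$-compatibility.

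For part (ii) I would combine Lemma \ref{hmf} with the contribution of the character twist. Viewing $\lambda$ as a 1-PS of $\mathrm{SL}(V)$ with weight spaces $V^j$, the limit identification gives $\mu^{\mathcal{L}}(\rho,\lambda)=\sum_j k_j P(\mathcal{F}^j,m)=\sum_{i,j}k_j P(\mathcal{F}_i^j,m)$, the sums running over $1\leq i\leq s$ and $1\leq j\leq r$. Since $\mathcal{L}_\beta^{\mathrm{per}}$ is $\mathcal{L}$ twisted by $\chi_{-(\beta+\epsilon\beta')}$, whose value on $\lambda$ contributes $\sum_{i,j}(\beta_i+\epsilon\beta_i')k_j\dim V_i^j$ to the Hilbert--Mumford function, and using $\dim V_i^j=P(\mathcal{F}_i^j,n)$, I obtain
\[ \mu^{\mathcal{L}_\beta^{\mathrm{per}}}(\rho,\lambda) = \sum_{i,j}k_j\left(P(\mathcal{F}_i^j,m)+\beta_i P(\mathcal{F}_i^j,n)\right) + \epsilon\sum_{i,j}k_j\beta_i' P(\mathcal{F}_i^j,n). \]
Semistability of $\mathcal{F}_i^j$ with reduced Hilbert polynomial $P_i/r_i$ gives $P(\mathcal{F}_i^j,m)=\tfrac{P_i(m)}{P_i(n)}P(\mathcal{F}_i^j,n)$, and the definition $\beta_i=\tfrac{P(m)}{P(n)}-\tfrac{P_i(m)}{P_i(n)}$ yields $P(\mathcal{F}_i^j,m)+\beta_i P(\mathcal{F}_i^j,n)=\tfrac{P(m)}{P(n)}P(\mathcal{F}_i^j,n)$. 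Hence the first sum equals $\tfrac{P(m)}{P(n)}\sum_{i,j}k_j\dim V_i^j$, which vanishes because $\lambda\in\mathrm{SL}(V)$ forces $\sum_{i,j}k_j\dim V_i^j=0$, leaving exactly the claimed expression.

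I expect the obstacles to be twofold. First, making the iterated-extension identification in part (i) precise requires tracking the bifiltration carefully and checking that no cross terms between distinct values of $i$ appear in a single graded piece; this rests entirely on $\lambda$ preserving the $V_i$ (that is, on $\lambda\in\mathrm{Stab}\beta$) together with $n$-regularity, which keeps $H^0(-(n))$ exact on every piece. Second, pinning down the sign with which the character twist enters the Hilbert--Mumford function is delicate, since Simpson's action on $Q$ is via $g^{-1}$ and $\mathcal{L}$ is built from a determinant of a dual; I would fix this convention once at the outset so that the $\beta$-contribution cancels $\mu^{\mathcal{L}}(\rho,\lambda)$ rather than reinforcing it, as the cancellation above demands.
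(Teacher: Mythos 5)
Your proposal is correct and takes essentially the same route as the paper: part (i) via the limit identification (\cite{huybrechts} Lemma 4.4.3), Lemma \ref{HN direct sum} applied to $\overline{\mathcal{F}}=\bigoplus_j \mathcal{F}^j$, and extension-closedness of semistable sheaves with fixed reduced Hilbert polynomial, and part (ii) via precisely the cancellation $P(\mathcal{F}_i^j,m)+\beta_i P(\mathcal{F}_i^j,n)=\tfrac{P(m)}{P(n)}P(\mathcal{F}_i^j,n)$ combined with the $\mathrm{SL}(V)$ condition $\sum_{i,j}k_j\dim V_i^j=0$. The only cosmetic difference is that where you use sheaf-theoretic intersections and an ``iterated extension'' argument, the paper defines the filtration directly by $\mathcal{F}^{[j]}_{(i)}:=\rho((V^{(i)}\cap V^{[j]})\otimes\mathcal{O}(-n))$ and carries out the bookkeeping you anticipated by induction on $j$ using a nine-term diagram of short exact sequences.
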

\begin{proof}
Let $n >\!> 0$ so that Lemma \ref{subsheaves bdd} holds. Let $0 \subset \overline{\mathcal{F}}^{(1)} \subset \cdots \subset \overline{\mathcal{F}}^{(s)}= \overline{\mathcal{F}}$ denote the Harder--Narasimhan filtration of $\overline{\mathcal{F}}$ where $V^{(i)} \cong H^0(\overline{\mathcal{F}}^{(i)}(n))$. By Lemma \ref{HN direct sum} the direct summands $\mathcal{F}^j$ have generalised Harder--Narasimhan filtrations  
\[  0 \subset  \mathcal{F}^j_{(1)} \subset \cdots \subset  
\mathcal{F}^j_{(s)} = \mathcal{F}^{j}\]
where
\[ \mathcal{F}^j_{(i)} := \mathcal{F}^j \cap \overline{\mathcal{F}}^{(i)}  =\overline{\rho}({V}^{(i)} \otimes \mathcal{O}(-n) ) \cap \overline{\rho}(V^j\otimes \mathcal{O}(-n) ). \]
Since $\overline{\rho}$ is a direct sum of maps which send $V^j\otimes \mathcal{O}(-n) $ to $\mathcal{F}^j$ and $H^0(\overline{\rho}(n))$ is an isomorphism this is equal to
 \[ \mathcal{F}^j_{(i)}=\overline{\rho}(({V}^{(i)}  \cap V^j)\otimes \mathcal{O}(-n) )= \frac{\rho((V^{(i)} \cap V^{[j]}) \otimes \mathcal{O}(-n)  )}{\rho((V^{(i)} \cap V^{[j-1]}) \otimes \mathcal{O}(-n) )} . \]
Let $\mathcal{F}^{[j]}_{(i)}:= \rho((V^{(i)} \cap V^{[j]}) \otimes \mathcal{O}(-n)  )$; then these sheaves define a filtration
\[ 0 \subset \mathcal{F}^{[j]}_{(1)} \subset \cdots \subset \mathcal{F}^{[j]}_{(s)} = \mathcal{F}^{[j]}\]
of $\mathcal{F}^{[j]}$. We claim that this filtration is a generalised Harder--Narasimhan filtration for $\mathcal{F}^{[j]}$ and thus that $\mathcal{F}^{[j]}$ is $\tau$-compatible. It is enough to show that $\mathcal{F}^{[j]}_{i}:= \mathcal{F}_{(i)}^{[j]}/\mathcal{F}_{(i-1)}^{[j]}$ is semistable with reduced Hilbert polynomial $P_i/r_i$ if it is nonzero. We prove this by induction on $j$. For $j=1$ it is clear as $\mathcal{F}^{[1]}$ is $\tau$-compatible so suppose we know this is true for $j-1$. We have a diagram of short exact sequences
\[  \xymatrix{
& 0   \ar[d]  & 0 \ar[d] \\ 0 \ar[r] & \mathcal{F}_{(i-1)}^{[j-1]}\ar[r] \ar[d] & \mathcal{F}_{(i)}^{[j-1]} \ar[r] \ar[d] & \mathcal{F}_{i}^{[j-1]} \ar[r] \ar[d] & 0 \\
0 \ar[r] & \mathcal{F}_{(i-1)}^{[j]} \ar[r] \ar[d] & \mathcal{F}_{(i)}^{[j]} \ar[r] \ar[d] & \mathcal{F}_{i}^{[j]} \ar[r] \ar[d] & 0 \\
0 \ar[r] & \mathcal{F}_{(i-1)}^j \ar[r] \ar[d] & \mathcal{F}_{(i)}^j \ar[r] \ar[d] & \mathcal{F}_{i}^j \ar[r]  & 0  \\
& 0 & 0 & \\
}\]
and so $\mathcal{F}_{(i-1)}^{[j-1]} =  \mathcal{F}_{(i)}^{[j-1]} \cap  \mathcal{F}_{(i-1)}^{[j]}$ from which it follows that the third column is also a short exact sequence. As the outer sheaves in this short exact sequence are both semistable with reduced Hilbert polynomial $P_i/r_i$, so is the middle sheaf. 
This completes the induction and shows that $ \mathcal{F}^{[j]}/\mathcal{F}^{[l]}$ is also $\tau$-compatible.

Recall that $\mathcal{L}_\beta^{\mathrm{per}}$ was constructed by twisting the original linearisation $\mathcal{L}$ on $\overline{Y}_{(\tau)}$ by the character of $\mathrm{Stab} \beta$ corresponding to $-(\beta + \epsilon \beta')$; therefore,
\[ \mu^{\mathcal{L}_\beta^{\mathrm{per}}}(\rho, \lambda)  =\mu^{\mathcal{L}}(\rho, \lambda) +(\beta + \epsilon \beta') \cdot \lambda \]
where $\cdot$ denotes the natural pairing between characters and 1-PSs of $\mathrm{Stab} \beta$. We have calculated
\[ \mu^{\mathcal{L}}(\rho, \lambda) = \sum_{j=1}^r k_j P(\mathcal{F}^j,m) \]
(see Lemma \ref{hmf}) and
\[ (\beta + \epsilon \beta') \cdot \lambda = \sum_{i=1}^s \sum_{j=1}^r k_j (\beta_i + \epsilon \beta'_i)v_{i,j} \]
where $v_{i,j}$ is the dimension of $(V^j \cap V^{(i)}/V^j \cap V^{(i-1)})$. Observe that $v_{i,j}=P(\mathcal{F}_i^j,n)$ where $\mathcal{F}_i^j = \mathcal{F}_{(i)}^j/\mathcal{F}_{(i-1)}^j$ as $H^0(\overline{\rho}(n))$ is an isomorphism, so that $V^{j} \cap V^{(i)} \cong H^0(\mathcal{F}_{(i)}^{j}(n))$ and the $\mathcal{F}_i^j$ are all $n$-regular. Then since $\mathcal{F}^j$ is $\tau$-compatible this means $\mathcal{F}_i^j$, if nonzero, has reduced Hilbert polynomial equal to $P_i/r_i$ so
\[ \sum_{i=1}^s \frac{P_i(m)}{P_i(n)} v_{i,j} = \sum_{i=1}^s P(\mathcal{F}_i^j,m) = P(\mathcal{F}^j,m). \]
Thus
\[ \mu^{\mathcal{L}_\beta^{\mathrm{per}}}(\rho, \lambda) = \sum_{j=1}^r k_j \left( P(\mathcal{F}^{j},m) + \sum_{i=1}^s \left( \epsilon \beta_i'-\frac{P_i(m)}{P_i(n)} \right)v_{i,j} \right) = \epsilon \sum_{j=1}^r \sum_{i=1}^s  k_j   \beta_i'P(\mathcal{F}_i^j,n) \]
and the proof is complete.
\end{proof}

We can use this lemma to study the indices $\gamma \in \mathfrak{t}_+$ of the stratification
$\{S^{(\beta)}_\gamma: \gamma \in \mathcal{C}\}$ of $Y_{(\tau)}^{ss}$. Recall that $\gamma$ determines a 1-PS
$\lambda_{\gamma}$ of $\mathrm{Stab} \beta$, and as above this determines a decomposition 
$V = V^1 \oplus \cdots \oplus V^r$
 of $V$ into weight spaces and an associated filtration
$ 0 \subset {V}^{[1]} \subset \cdots \subset V^{[r]}=V$ 
where $V^{[j]} = \bigoplus_{l \leq j} V^l$, together with a sequence of rational numbers $\gamma_1 > \cdots > \gamma_r$ such that $\sum \gamma_j \mathrm{dim} V^j = 0$.

\begin{prop}\label{second index set}
Suppose that $m >\!> n >\!> 0$ and that  
$\gamma$ is a nonzero index in the stratification
$\{S^{(\beta)}_\gamma: \gamma \in \mathcal{C}\}$ of $Y_{(\tau)}^{ss}$. 
If $\rho  : V \otimes \mathcal{O}(-n) \rightarrow \mathcal{F}$ belongs to
the subscheme
$Y_\gamma^{(\beta)-ss}$ of $Y_{(\tau)}^{ss}$, then 
$\overline{\rho} = p_\gamma^{(\beta)}(\rho)
\in Z_\gamma^{(\beta)-ss}$ is given by
$\overline{\rho} = \oplus_{j=1}^r \rho^j:\oplus_{j=1}^r V^j \otimes \mathcal{O}(-n) \to \oplus_{j=1}^r \mathcal{F}^j$ where
 $\mathcal{F}^{[j]} =\rho(V^{[j]} \otimes \mathcal{O}(-n))$ and $\mathcal{F}^j =\mathcal{F}^{[j]}/ \mathcal{F}^{[j-1]}$. In particular the $\mathcal{F}^j$ are $\tau$-compatible and so have generalised Harder--Narasimhan filtrations
 \[  0 \subseteq \mathcal{F}^j_{(1)} \subseteq \cdots \subseteq   \mathcal{F}^j_{(s)} = \mathcal{F}^j. \]
 Let $\mathcal{F}_i^j:=\mathcal{F}^j_{(i)}/\mathcal{F}^j_{(i-1)}$; then
\[ \gamma_j = - \frac{ \epsilon\sum_{i=1}^s \beta'_i P(\mathcal{F}_i^j,n) }{P(\mathcal{F}^j,n) }. \]
\end{prop}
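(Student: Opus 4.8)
The plan is to identify the index $\gamma$ by combining the explicit Hilbert--Mumford formula of Lemma \ref{helpful lemma}(ii) with the standard relationship between an index of the norm-square stratification and the one-parameter subgroup adapted to points of the corresponding stratum. First I would dispatch the structural claims about $\overline{\rho}$. Since $\overline{\rho} = p_\gamma^{(\beta)}(\rho)$ is the limit of $\rho$ under $\lambda_\gamma$ and is fixed by $\lambda_\gamma(\mathbb{C}^*)$, the construction preceding the proposition together with \cite{huybrechts} Lemma 4.4.3 (identifying the limit with the associated graded object, just as in Lemma \ref{helpful lemma}) shows that $\overline{\rho}$ splits as $\oplus_{j=1}^r \rho^j$ with $\mathcal{F}^j = \mathcal{F}^{[j]}/\mathcal{F}^{[j-1]}$. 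Because $\overline{\rho} \in Y_{(\tau)}^{ss}$, the sheaf $\overline{\mathcal{F}} = \oplus_j \mathcal{F}^j$ has Harder--Narasimhan type $\tau$, and Lemma \ref{HN direct sum} then forces each summand $\mathcal{F}^j$ to be $\tau$-compatible, with generalised Harder--Narasimhan filtration induced by that of $\overline{\mathcal{F}}$; this is precisely the situation already analysed in Lemma \ref{helpful lemma}.

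For the formula itself I would write $d_j := P(\mathcal{F}^j, n) = \dim V^j$ and $c_j := \epsilon \sum_{i=1}^s \beta'_i P(\mathcal{F}_i^j, n)$, so that Lemma \ref{helpful lemma}(ii) reads $\mu^{\mathcal{L}_\beta^{\mathrm{per}}}(\rho, \lambda) = \sum_{j=1}^r k_j c_j$ for any $1$-PS $\lambda$ of $\mathrm{Stab}\beta$ whose weight on $V^j$ is $k_j$, while $\|\lambda\|^2 = \sum_j k_j^2 d_j$ in the chosen invariant inner product (exactly as in Lemma \ref{gr object is in Z}). The index $\gamma$ points in the direction of the $1$-PS minimising $\mu^{\mathcal{L}_\beta^{\mathrm{per}}}(\rho, \lambda)/\|\lambda\|$ and is fixed in length by the defining property of $Z_\gamma^{(\beta)}$, namely $-\mu^{\mathcal{L}_\beta^{\mathrm{per}}}(\overline{\rho}, \gamma) = \|\gamma\|^2$ (the perturbed analogue of the identity $-\mu^{\mathcal{L}}(\rho, \lambda_\beta) = \|\beta\|^2$ used in Lemma \ref{gr object is in Z}).

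The crucial observation is that the centering term vanishes: summing over $j$ gives $\sum_j c_j = \epsilon \sum_i \beta'_i \sum_j P(\mathcal{F}_i^j, n) = \epsilon \sum_i \beta'_i P_i(n) = 0$, the last equality being the constraint $\sum_i \beta'_i P_i(n) = 0$ built into the definition (\ref{beta'def}) of $\beta'$ (here $\sum_j P(\mathcal{F}_i^j, n) = P_i(n)$ since $\mathcal{F}_i = \oplus_j \mathcal{F}_i^j$ is the $i$th Harder--Narasimhan quotient of $\overline{\mathcal{F}}$ and is $n$-regular). A routine Lagrange-multiplier computation then shows that, subject to $\sum_j k_j d_j = 0$, the quotient $\sum_j k_j c_j / (\sum_j k_j^2 d_j)^{1/2}$ is minimised in the direction $k_j \propto -c_j/d_j$, a direction that already satisfies $\sum_j k_j d_j = 0$ precisely because $\sum_j c_j = 0$. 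Finally I would set $\gamma_j := -c_j/d_j$ and verify directly that $\sum_j \gamma_j d_j = 0$ and $-\mu^{\mathcal{L}_\beta^{\mathrm{per}}}(\overline{\rho}, \gamma) = \sum_j c_j^2/d_j = \|\gamma\|^2$, so this $\gamma$ is both in the minimising direction and correctly normalised; by uniqueness of the index this yields $\gamma_j = -\epsilon \sum_{i} \beta'_i P(\mathcal{F}_i^j, n)/P(\mathcal{F}^j, n)$ as claimed, while the strict ordering $\gamma_1 > \cdots > \gamma_r$ is part of the hypothesis that $\gamma$ is a genuine nonzero index.

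The hard part will be pinning down the normalisation that converts the adapted $1$-PS (determined only up to positive scale by the minimisation) into the index $\gamma$ itself; this is exactly where the vanishing $\sum_j c_j = 0$ does the real work, since it both kills the Lagrange shift and makes the self-consistency equation $-\mu^{\mathcal{L}_\beta^{\mathrm{per}}}(\overline{\rho},\gamma) = \|\gamma\|^2$ force the scaling factor to be exactly $1$. Everything else is bookkeeping with the Hilbert--Mumford formula together with the $n$-regularity that guarantees $\dim V^j = P(\mathcal{F}^j, n)$ and $\dim\!\big(V^j \cap V^{(i)}/V^j \cap V^{(i-1)}\big) = P(\mathcal{F}_i^j, n)$.
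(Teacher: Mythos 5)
Your proposal is correct and takes essentially the same route as the paper's proof: both identify $\overline{\rho}$ as the associated graded object via \cite{huybrechts} Lemma 4.4.3, invoke Lemma \ref{helpful lemma} for $\tau$-compatibility and the Hilbert--Mumford formula, use adaptedness of $\lambda_\gamma$ to minimise $\mu^{\mathcal{L}_\beta^{\mathrm{per}}}(\rho,\lambda)/\|\lambda\|$ subject to $\sum_j \gamma_j P(\mathcal{F}^j,n)=0$, and fix the scale by the normalisation $\mu^{\mathcal{L}_\beta^{\mathrm{per}}}(\rho,\lambda_\gamma) = -\|\gamma\|^2$ characterising $Z_\gamma^{(\beta)}$. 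Your explicit verification that $\sum_j c_j = \epsilon\sum_i \beta_i' P_i(n) = 0$ (so the unconstrained minimising direction already satisfies the constraint, i.e.\ the Lagrange multiplier vanishes) is a detail the paper leaves implicit, but it is the same argument.
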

\begin{proof} We assume $m >\!> n >\!> 0$ so that the statements of Proposition \ref{strats agree} and Lemma \ref{subsheaves bdd} hold. We have seen that 
\[\overline{\rho} = \oplus_{j=1}^r \rho^j:\oplus_{j=1}^r V^{j} \otimes \mathcal{O}(-n) \to \oplus_{j=1}^r \mathcal{F}^j \]
 is the graded object associated to the filtration $0= \mathcal{F}^{[0]} \subset \mathcal{F}^{[1]} \subset \cdots \subset \mathcal{F}^{[r]} = \mathcal{F}$ of $\mathcal{F}$ given by $\mathcal{F}^{[j]} =\rho(V^{[j]} \otimes \mathcal{O}(-n))$,
by \cite{huybrechts} Lemma 4.4.3. In particular $\overline{\rho} \in Y_{(\tau)}^{ss}$, so by Lemma \ref{helpful lemma} the $\mathcal{F}^j$ are $\tau$-compatible and 
\[ \mu^{\mathcal{L}_\beta^{\mathrm{per}}}(\rho, \lambda_{\gamma}) = \epsilon \sum_{j=1}^r \sum_{i=1}^s  \gamma_j   \beta_i'P(\mathcal{F}_i^j,n) .\]
Since $\rho \in Y_\gamma^{(\beta)-ss}$ the associated 1-PS $\lambda_{\gamma}$ is adapted to $\rho$, and so 
\[ \frac{\mu^{\mathcal{L}_\beta^{\mathrm{per}}}(\rho, \lambda)}{||\lambda||} \]
takes its minimum value for $\lambda$ a non-trivial 1-PS of $\mathrm{Stab} \beta$ when
$\lambda = \lambda_{\gamma}$; this will enable us to determine the values of $\gamma_j$ for $1 \leq j \leq r$. If we minimise the quantity
\[ \frac{ \mu^{\mathcal{L}_\beta^{\mathrm{per}}}(\rho, \lambda_{\gamma})}{|| \lambda_{\gamma} ||} \]
subject to $\sum_{i=1}^r \gamma_j P(\mathcal{F}^j,n) = 0$, we see that
\[ \gamma_j = - \frac{ \epsilon\sum_{i=1}^s \beta'_i P(\mathcal{F}_i^j,n) }{P(\mathcal{F}^j,n)) } \]
The $\gamma_j$ have been scaled so that $ \mu^{\mathcal{L}_\beta^{\mathrm{per}}}(\rho, \lambda_{\gamma}) = - || \gamma ||^2$, which ensures $\overline{\rho}$ is a point in $ Z_\gamma^{(\beta)}$.
\end{proof}

From this description we can write down the strata inductively, starting with the highest stratum. In particular we know the GIT semistable set, corresponding to the open stratum $S_0^{(\beta)}$, is the complement of the (closures of the) higher strata.

\begin{prop} \label{ss can be tested by subobjects}
Suppose $m >\!> n >\!> 0$ and $\rho: V \otimes \mathcal{O}(-n) \rightarrow \mathcal{F}$ is a point in $Y_{(\tau)}^{ss}$. Then $\rho$ is semistable with respect to $\mathcal{L}_\beta^{\mathrm{per}}$ if and only if for all proper nonzero $\tau$-compatible subsheaves $\mathcal{F}'  \subset \mathcal{F}$ for which $\mathcal{F} / \mathcal{F}'$ is $\tau$-compatible we have
\[\sum_{i=1}^s  \beta_i' P(\mathcal{F}'_i,n)  \geq 0\]
where since $\mathcal{F}'$ is $\tau$-compatible it has a generalised Harder--Narasimhan filtration
\[  0 \subseteq \mathcal{F}'_{(1)} \subseteq \cdots \subseteq   \mathcal{F}'_{(s)} = \mathcal{F}' \]
and $\mathcal{F}'_i:=\mathcal{F}'_{(i)}/\mathcal{F}'_{(i-1)}$.
\end{prop}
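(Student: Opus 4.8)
The plan is to apply the Hilbert--Mumford criterion to the $\mathrm{Stab}\beta$-action on $\overline{Y}_{(\tau)}$ with the ample linearisation $\mathcal{L}_\beta^{\mathrm{per}}$, and then to translate the resulting numerical inequalities into the language of $\tau$-compatible subsheaves using Lemma \ref{helpful lemma}. The recurring subtlety is that the clean formula of Lemma \ref{helpful lemma}(ii) is available only for one-parameter subgroups $\lambda$ of $\mathrm{Stab}\beta$ whose limit $\overline{\rho}=\lim_{t\to 0}\lambda(t)\cdot\rho$ again lies in $Y_{(\tau)}^{ss}$; for a general $\lambda$ the Hilbert--Mumford function involves the polynomials $P(\mathcal{F}^{[j]},m)$ with $m\gg n$, which we cannot control.

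First I would record a numerical reformulation. For a 1-PS $\lambda$ with $\overline{\rho}\in Y_{(\tau)}^{ss}$, put $a_j:=\sum_i\beta_i'P(\mathcal{F}_i^j,n)$, so Lemma \ref{helpful lemma}(ii) reads $\mu^{\mathcal{L}_\beta^{\mathrm{per}}}(\rho,\lambda)=\epsilon\sum_j k_j a_j$. Since $P(\mathcal{F}_i^{[j]},n)=\sum_{l\le j}P(\mathcal{F}_i^l,n)$ and $\sum_i\beta_i'P_i(n)=0$ (so the total $A(\mathcal{F}):=\sum_i\beta_i'P(\mathcal{F}_i,n)$ vanishes), a summation by parts gives
\[ \mu^{\mathcal{L}_\beta^{\mathrm{per}}}(\rho,\lambda)=\epsilon\sum_{j=1}^{r-1}(k_j-k_{j+1})\,A(\mathcal{F}^{[j]}),\qquad A(\mathcal{F}'):=\sum_{i=1}^s\beta_i'P(\mathcal{F}'_i,n), \]
where $k_1>\cdots>k_r$ and, by Lemma \ref{helpful lemma}(i), each $\mathcal{F}^{[j]}$ is a $\tau$-compatible subsheaf of $\mathcal{F}$ with $\mathcal{F}/\mathcal{F}^{[j]}$ also $\tau$-compatible. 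Because every coefficient $k_j-k_{j+1}$ is positive, this yields the ``if'' direction directly: if $A(\mathcal{F}')\ge 0$ for every admissible $\mathcal{F}'$, then $\mu^{\mathcal{L}_\beta^{\mathrm{per}}}(\rho,\lambda)\ge 0$ for every $\lambda$ with $\overline{\rho}\in Y_{(\tau)}^{ss}$. To upgrade this to semistability I would argue by contraposition using the Kempf description (Remark \ref{KeN}) together with Proposition \ref{second index set}: if $\rho$ were unstable it would lie in a higher stratum $S_\gamma^{(\beta)}$, whose adapted 1-PS has limit in $Z_\gamma^{(\beta)-ss}\subseteq Y_{(\tau)}^{ss}$, so a destabilising $\lambda$ can always be taken with limit in $Y_{(\tau)}^{ss}$, contradicting the inequality just proved.

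For the ``only if'' direction I expect the main difficulty. Given an admissible $\tau$-compatible $\mathcal{F}'$, the naive candidate 1-PS built from $V'=H^0(\rho(n))^{-1}H^0(\mathcal{F}'(n))$ need not lie in $\mathrm{Stab}\beta$, since $V'$ is in general not graded for the fixed weight decomposition $V=\oplus_i V_i$ (already for a diagonal line in $V_1\oplus V_2$). I would circumvent this by passing to the associated split point $\overline{\rho}_0:=p_\beta(\rho)\in Z_{(\tau)}^{ss}$. As $\overline{\rho}_0\in\overline{\mathrm{Stab}\beta\cdot\rho}$ (Remark \ref{rmk on mod lin}) and, for an ample linearisation of a reductive group, the orbit closure of a semistable point consists of semistable points, semistability of $\rho$ forces semistability of $\overline{\rho}_0$. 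Writing $\overline{\rho}_0=\oplus_i\rho_i^0$ with $\rho_i^0:V_i\otimes\mathcal{O}(-n)\to\mathcal{F}_i$, the $\tau$-compatibility of $\mathcal{F}'$ gives $\mathcal{F}'_{(i)}=\mathcal{F}'\cap\mathcal{F}^{(i)}$ and hence inclusions $\mathcal{F}'_i\hookrightarrow\mathcal{F}_i$ of semistable sheaves of reduced Hilbert polynomial $P_i/r_i$, so $\mathcal{F}'':=\oplus_i\mathcal{F}'_i\subset\overline{\mathcal{F}}_0$ is a $\tau$-compatible subsheaf with $\tau$-compatible quotient and $A(\mathcal{F}'')=A(\mathcal{F}')$. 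The subspace $V''=\oplus_i V'_i$ is now graded, so it defines a two-step 1-PS $\lambda\in\mathrm{Stab}\beta$; one checks that $\lim_t\lambda(t)\cdot\overline{\rho}_0=\mathcal{F}''\oplus(\overline{\mathcal{F}}_0/\mathcal{F}'')$ has Harder--Narasimhan type $\tau$ (Lemma \ref{HN direct sum}) and so lies in $Y_{(\tau)}^{ss}$. Applying the boxed formula to $(\overline{\rho}_0,\lambda)$ gives $\mu^{\mathcal{L}_\beta^{\mathrm{per}}}(\overline{\rho}_0,\lambda)=\epsilon(k_1-k_2)A(\mathcal{F}')$, and semistability of $\overline{\rho}_0$ together with $k_1>k_2$ forces $A(\mathcal{F}')\ge 0$.

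Finally I would assemble the two directions, having fixed $m\gg n\gg 0$ large enough that the boundedness and $n$-regularity statements of Lemma \ref{subsheaves bdd} hold (so that each $\mathcal{F}'_i$ is $n$-regular and $P(\mathcal{F}'_i,n)$ is the relevant dimension) and the perturbation is small enough for the conclusions of \S\ref{choice of pert linear}. The crux is the asymmetry forced by Lemma \ref{helpful lemma}: instability is detected by adapted 1-PSs via Kempf and Proposition \ref{second index set}, whereas an arbitrary destabilising $\tau$-compatible subsheaf of $\mathcal{F}$ is made visible to a 1-PS of $\mathrm{Stab}\beta$ only after replacing $\rho$ by its split limit $p_\beta(\rho)$.
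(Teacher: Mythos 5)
Your proposal is correct, and while its two load-bearing pillars --- Lemma \ref{helpful lemma} and Proposition \ref{second index set} --- are the same as the paper's, both halves are executed along genuinely different lines. For the direction ``the inequalities imply GIT semistability'' the paper argues contrapositively much as you do, via the stratum $S^{(\beta)}_\gamma$ containing an unstable $\rho$; but instead of your summation by parts $\mu^{\mathcal{L}_\beta^{\mathrm{per}}}(\rho,\lambda)=\epsilon\sum_{j=1}^{r-1}(k_j-k_{j+1})A(\mathcal{F}^{[j]})$, which extracts only \emph{some} $j$ with $A(\mathcal{F}^{[j]})<0$, it runs an induction on the inequalities $\gamma_1>\gamma_j$ coming from Proposition \ref{second index set} to show precisely that $A(\mathcal{F}^{[1]})<0$ and takes $\mathcal{F}'=\mathcal{F}^{[1]}$; your version is marginally cleaner, the paper's marginally sharper, and the content is equivalent. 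The real divergence is in the direction ``semistable implies the inequalities'': the paper works directly at $\rho$, setting $V'=H^0(\rho(n))^{-1}(H^0(\mathcal{F}'(n)))$, choosing a complement $V''$, forming the two-weight 1-PS $\lambda(t)=\mathrm{diag}(t^{v-v'}I_{V'},\,t^{-v'}I_{V''})$, and computing $\mu^{\mathcal{L}_\beta^{\mathrm{per}}}(\rho,\lambda)=v\epsilon\sum_{i=1}^s\beta_i'P(\mathcal{F}'_i,n)$ via Lemma \ref{helpful lemma}, whereas you first flow to the split point $\overline{\rho}_0=p_\beta(\rho)$ --- legitimately semistable, since for an ample linearisation of the reductive group $\mathrm{Stab}\beta$ the orbit closure of a semistable point consists of semistable points --- and only then build the (now visibly graded) two-step 1-PS. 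Your detour is motivated by a genuine subtlety which the paper's text glosses over: Lemma \ref{helpful lemma} is stated only for 1-PSs of $\mathrm{Stab}\beta$, and semistability for the $\mathrm{Stab}\beta$-action is tested only against such 1-PSs, yet the subspace $V'$ above need not be graded for the fixed decomposition $V=\oplus_i V_i$, so the paper's $\lambda$ need not lie in $\mathrm{Stab}\beta$ as literally written. What your route buys is a repair of exactly this point; what it costs is extra bookkeeping, all of which checks out at the level of detail you give: the inclusions $\mathcal{F}'_i\hookrightarrow\mathcal{F}_i$ and the semistability of the quotients $\mathcal{F}_i/\mathcal{F}'_i$ with reduced Hilbert polynomial $P_i/r_i$ (needed so that the limit of your 1-PS lies in $Z^{ss}_{(\tau)}\subseteq Y^{ss}_{(\tau)}$ and Lemma \ref{helpful lemma} applies) follow from the hypothesis that $\mathcal{F}/\mathcal{F}'$ is $\tau$-compatible together with the standard fact that a quotient of a semistable sheaf by a subsheaf of the same reduced Hilbert polynomial is again semistable of that reduced Hilbert polynomial, and $A(\mathcal{F}'')=A(\mathcal{F}')$ since the generalised Harder--Narasimhan quotients of $\mathcal{F}''=\oplus_i\mathcal{F}'_i$ are exactly the $\mathcal{F}'_i$ by Lemma \ref{HN direct sum}.
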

\begin{proof}
We suppose $m >\!> n >\!> 0$ are chosen as at the beginning of Proposition \ref{second index set}. Suppose $\rho$ is semistable with respect to $\mathcal{L}_\beta^{\mathrm{per}}$ and let $\mathcal{F}'  \subset \mathcal{F}$ be a $\tau$-compatible subsheaf such that $\mathcal{F} / \mathcal{F}'$ is $\tau$-compatible. Let $V'=H^0(\rho(n))^{-1}(H^0(\mathcal{F}'(n))) \subset V$ and let $V''$ be a complement to $V'$ in $V$. Consider the 1-PS
\[ \lambda(t) = \left(\begin{array}{cc} t^{v-v'}I_{V'} & 0 \\ 0 & t^{-v'}I_{V''} \end{array} \right)\]
where $v'$ (respectively $v$) denotes the dimension of $V'$ (respectively $V$). Then \[\overline{\rho} :=( \lim_{t \to 0} \lambda(t) \cdot \rho ): (V' \oplus V'') \otimes \mathcal{O}(-n) \rightarrow \overline{\mathcal{F}}\] where $ \overline{\mathcal{F}}= \mathcal{F}' \oplus \mathcal{F}/\mathcal{F}'$ has Harder--Narasimhan type $\tau$. Since $\rho$ is semistable
\[ \mu^{\mathcal{L}_\beta^{\mathrm{per}}}(\rho, \lambda) \geq 0, \]
but by Lemma \ref{helpful lemma} 
\[ \mu^{\mathcal{L}_\beta^{\mathrm{per}}}(\rho, \lambda) = v \epsilon  \sum_{i=1}^s  \beta_i' P(\mathcal{F}'_i,n) \]
where $v \epsilon > 0$, so $\sum_{i=1}^s  \beta_i' P(\mathcal{F}'_i,n) \geq 0$.

Now suppose $\rho$ is unstable with respect to $\mathcal{L}_\beta^{\mathrm{per}}$. Then there is a nonzero $\gamma \in \mathcal{C}$ such that $\rho$ belongs to $S^{(\beta)}_\gamma$, and in fact by conjugating $\gamma$ by an element of $\mathrm{Stab} \beta$ we may assume $\rho \in Y_\gamma^{(\beta)-ss}$. Then $\gamma$ determines a filtration $0 \subset V^{[1]} \subset \cdots \subset V^{[r]} = V$ and sequence of rational numbers $\gamma_1 > \cdots > \gamma_r$, and by Proposition \ref{second index set}
\[ \gamma_j = -\frac{\epsilon \sum_{i=1}^s \beta_i'P(\mathcal{F}^{j}_i,n)}{P(\mathcal{F}^j,n)}.\]
We claim for $j=2, \dots , r$ that
\[  \frac{ \sum_{i=1}^s \beta_i'P(\mathcal{F}^{[1]}_i,n)}{P(\mathcal{F}^{[1]},n)} < \frac{ \sum_{i=1}^s \beta_i'P(\mathcal{F}^{[j]}_i,n)}{P(\mathcal{F}^{[j]},n)}.\]
For $j=2$ this is equivalent to the inequality $\gamma_1 > \gamma_2$. Then we proceed by induction as combining the above inequality with $\gamma_1 > \gamma_{j+1}$ gives the inequality for $j+1$. In particular if $j=r$ then
\[  \frac{ \sum_{i=1}^s \beta_i'P(\mathcal{F}^{[1]}_i,n)}{P(\mathcal{F}^{[1]},n)} < \frac{ \sum_{i=1}^s \beta_i'P(\mathcal{F}_i,n)}{P(\mathcal{F},n)} =0\]
by construction of $\beta'$. Let $\mathcal{F}'= \mathcal{F}^{[1]}$; then by Lemma \ref{helpful lemma} both $\mathcal{F}'$ and $\mathcal{F}/ \mathcal{F}'$ are $\tau$-compatible and we have shown that
\[  \sum_{i=1}^s \beta_i'P(\mathcal{F}'_i,n) < 0.\]
\end{proof}

\subsection{Moduli of $\theta$-semistable $n$-rigidified sheaves of fixed Harder--Narasimhan type}

As before let $W$ be a complex projective scheme and let $\tau = (P_1, \dots, P_s)$ be a fixed Harder--Narasimhan type. Let $P= \sum_{i=1}^s P_i$ and for $n >\!>0$ let $V$ be a vector space of dimension $P(n)$. Recall that $Q$ is the open subscheme of $\mathrm{Quot}(V \otimes \mathcal{O}(-n),P)$ representing quotient sheaves $\rho : V \otimes \mathcal{O}(-n) \rightarrow \mathcal{F}$ which are pure of dimension $e$ and such that $H^0(\rho(n))$ is an isomorphism. We defined in $\S$\ref{refined strat} a subscheme $R_\tau = GY_{(\tau)}^{ss}$ of $Q$ consisting of the quotient sheaves $\rho : V \otimes \mathcal{O}(-n) \rightarrow \mathcal{F}$ which have Harder--Narasimhan type $\tau$. Let $\beta = \beta(\tau)$ be the corresponding index of the stratification $\{ S_\beta \: : \: \beta \in \mathcal{B} \}$ of $\overline{Q}$ as defined in $\S$\ref{strat of Q}, and recall from Proposition \ref{strats agree} that for $m >\!> n >\!> 0$ the subscheme $R_\tau$ is a union of connected components of $S_\beta$. A choice of $\theta \in \mathbb{Q}^s$ defines a notion of (semi)stability for sheaves of Harder--Narasimhan type $\tau$ (see Definition \ref{defthetastability}) and an ample $\mathrm{Stab} \beta$-linearisation $\mathcal{L}_\beta^{\mathrm{per}}$ on a projective completion $\overline{Y}_{(\tau)}$ of $Y_{(\tau)}^{ss}$ in terms of  \[\beta' = i \mathrm{diag}(\beta_1', \cdots \beta_1', \cdots , \beta_s', \cdots \beta_s') \in \mathfrak{t}\]
defined as at (\ref{beta'def}) where
\[ \beta_i' = \theta_i -\frac{\sum_{j=1}^s \theta_j P_j(n)}{P(n)} \]
appears $P_i(n)$ times (see $\S$\ref{choice of pert linear}). 

\begin{thm}\label{ss agrees}
Suppose $n$ is sufficiently large and for fixed $n$ that $m$ is sufficiently large. Then $\rho: V \otimes \mathcal{O}(-n) \rightarrow \mathcal{F}$ in $Y_{(\tau)}^{ss}$ is GIT semistable for the action of $\mathrm{Stab} \beta$
on $\overline{Y_{(\tau)}}$ with respect to $\mathcal{L}_\beta^{\mathrm{per}}$ if and only if $\mathcal{F}$ is $\theta$-semistable.
\end{thm}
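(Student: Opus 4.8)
The plan is to reduce GIT $\mathcal{L}_\beta^{\mathrm{per}}$-semistability to the purely numerical criterion already isolated in Proposition \ref{ss can be tested by subobjects}, and then to recognise that this criterion is nothing but the $\theta$-semistability inequality of Definition \ref{defthetastability} evaluated at $n$. First I would fix $n$ and then $m$ large enough that Proposition \ref{ss can be tested by subobjects}, Lemma \ref{inequality for n} and Lemma \ref{subsheaves bdd} all apply simultaneously; this is the one place where the quantifiers ``$n$ sufficiently large, and for fixed $n$ then $m$ sufficiently large'' in the statement must be tracked carefully across the earlier results.

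By Proposition \ref{ss can be tested by subobjects}, $\rho$ is semistable with respect to $\mathcal{L}_\beta^{\mathrm{per}}$ if and only if $\sum_{i=1}^s \beta_i' P(\mathcal{F}_i',n) \geq 0$ for every proper nonzero $\tau$-compatible subsheaf $\mathcal{F}' \subset \mathcal{F}$ for which $\mathcal{F}/\mathcal{F}'$ is also $\tau$-compatible, where the $\mathcal{F}_i'$ are the successive quotients of the generalised Harder--Narasimhan filtration of $\mathcal{F}'$. The decisive observation is that this is quantified over exactly the same collection of subsheaves as the $\theta$-semistability condition in Definition \ref{defthetastability}, so the whole proof becomes a subsheaf-by-subsheaf comparison of two inequalities.

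Next I would substitute the defining formula $\beta_i' = \theta_i - \left(\sum_j \theta_j P_j(n)\right)/P(n)$ from (\ref{beta'def}). Using that the generalised Harder--Narasimhan filtration gives $\sum_i P(\mathcal{F}_i',n) = P(\mathcal{F}',n)$, together with the fact that $\mathcal{F}$ has Harder--Narasimhan type $\tau$ so that $P(\mathcal{F}_i,n) = P_i(n)$ and $P(\mathcal{F},n) = P(n)$, one computes
\[ \sum_{i=1}^s \beta_i' P(\mathcal{F}_i',n) = \sum_{i=1}^s \theta_i P(\mathcal{F}_i',n) - \frac{\sum_{j=1}^s \theta_j P(\mathcal{F}_j,n)}{P(\mathcal{F},n)}\, P(\mathcal{F}',n). \]
Since $P(\mathcal{F}',n) > 0$, dividing through shows that the GIT inequality $\sum_i \beta_i' P(\mathcal{F}_i',n) \geq 0$ is equivalent to
\[ \frac{\sum_{i=1}^s \theta_i P(\mathcal{F}_i',n)}{P(\mathcal{F}',n)} \geq \frac{\sum_{j=1}^s \theta_j P(\mathcal{F}_j,n)}{P(\mathcal{F},n)}, \]
which is precisely the $\theta$-semistability inequality of Definition \ref{defthetastability} with every Hilbert polynomial evaluated at $n$.

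Finally I would invoke Lemma \ref{inequality for n}, which for $n$ sufficiently large promotes this evaluated-at-$n$ inequality to the genuine inequality of reduced Hilbert polynomials appearing in Definition \ref{defthetastability}. Running this equivalence over all admissible $\mathcal{F}'$ then yields that $\rho$ is semistable with respect to $\mathcal{L}_\beta^{\mathrm{per}}$ if and only if $\mathcal{F}$ is $\theta$-semistable, as required. I expect no genuine obstacle in the algebra, which is essentially a one-line rearrangement; the real care lies in the bookkeeping, namely ensuring that a single choice of $n$ (and then $m$) makes Proposition \ref{ss can be tested by subobjects} and Lemma \ref{inequality for n} hold at once, and confirming that the two quantifier ranges (over $\tau$-compatible $\mathcal{F}'$ with $\tau$-compatible quotient) genuinely coincide, so that the pointwise equivalence of inequalities does translate into the equivalence of the two notions of semistability.
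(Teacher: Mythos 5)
Your proposal is correct and matches the paper's proof essentially verbatim: the paper likewise fixes $n$ (for Lemmas \ref{subsheaves bdd} and \ref{inequality for n}) and then $m$, applies Proposition \ref{ss can be tested by subobjects} to reduce GIT semistability to the inequality $\sum_i \beta_i' P(\mathcal{F}_i',n)\geq 0$, rewrites this via the definition of $\beta'$ as the $\theta$-inequality evaluated at $n$, and invokes Lemma \ref{inequality for n} to pass to the polynomial inequality. Your explicit algebraic verification of the rearrangement, which the paper leaves as a one-line remark, is a harmless (and welcome) elaboration.
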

\begin{proof}
We pick $n$ sufficiently large so that the statements of Lemma \ref{subsheaves bdd} and Lemma \ref{inequality for n} hold. Then pick $m$ as in \cite{simpson} so that GIT semistability of points in $Q$ with respect to $\mathcal{L}$ is equivalent to semistability of the corresponding sheaf. We also assume $n$ and $m$ are chosen large enough for Proposition \ref{strats agree} to hold.

Suppose $\mathcal{F}$ is $\theta$-semistable and consider a $\tau$-compatible subsheaf $\mathcal{F}'$ of $\mathcal{F}$ such that the quotient $\mathcal{F}/ \mathcal{F}'$ is also $\tau$-compatible. Then by $\theta$-semistability we have an inequality
\[\frac{\sum \theta_i P(\mathcal{F}'_i,n)}{P(\mathcal{F}',n)} \geq \frac{\sum \theta_i P(\mathcal{F}_i,n)}{P(\mathcal{F},n)} \]
which by the definition of $\beta'$ is equivalent to $ \sum \beta_i' P(\mathcal{F}'_i ,n) \geq 0$, and so by 
Proposition \ref{ss can be tested by subobjects} we conclude that $\rho$ is GIT semistable with respect to $\mathcal{L}_\beta^{\mathrm{per}}$.

Now suppose $\rho$ is GIT semistable with respect to $\mathcal{L}_\beta^{\mathrm{per}}$ and take a $\tau$-compatible subsheaf $\mathcal{F}' \subset \mathcal{F}$ such that $\mathcal{F}/ \mathcal{F}'$ is  $\tau$-compatible. Then $\sum \beta_i' P(\mathcal{F}'_i ,n) \geq 0$ by Proposition  \ref{ss can be tested by subobjects}, or equivalently 
\[\frac{\sum \theta_i P(\mathcal{F}'_i,n)}{P(\mathcal{F}',n)} \geq \frac{\sum \theta_i P(\mathcal{F}_i,n)}{P(\mathcal{F},n)} .\]
We have chosen $n$ so that we can apply the results of Lemma \ref{inequality for n} and conclude that $\mathcal{F}$ is $\theta$-semistable.
\end{proof}

\begin{rmk}
It is straightforward to modify the proof of this theorem to show that, under the same assumptions on $n$ and $m$, a point $\rho: V \otimes \mathcal{O}(-n) \rightarrow \mathcal{F} $ in $Y_{(\tau)}^{ss}$ is GIT stable 
 with respect to $\mathcal{L}_\beta^{\mathrm{per}}$ 
if and only if the sheaf $\mathcal{F}$ of Harder--Narasimhan type $\tau$ is $\theta$-stable in the sense of Definition \ref{defthetastability}. 
\end{rmk}

\begin{rmk}\label{ss agrees rmk}
Our aim is to take a GIT quotient of $\overline{Y}_{(\tau)}$ by the action of
$\mathrm{Stab} \beta$,  so we need to examine semistability here. If  a point
$\rho: V \otimes \mathcal{O}(-n) \rightarrow \mathcal{F}$ 
in $\overline{Y}_{(\tau)}$ is $\theta$-semistable then the sheaf
$\mathcal{F}$ is $\tau$-compatible, and since $\mathcal{F}$ also has Hilbert polynomial $P$ it must have Harder--Narasimhan type $\tau$, so that $\rho$ actually belongs to $Y_{(\tau)}^{ss}$. Let
\[ Y_{(\tau)}^{\theta -ss}:= \overline{Y}_{(\tau)}^{\theta-ss}\]
be the set of $\theta$-semistable sheaves in $\overline{Y}_{(\tau)}$; then as we saw above this set is contained in ${Y}_{(\tau)}^{ss}$. We are assuming that
$\epsilon>0$ is sufficiently small that the perturbation $\mathcal{L}_\beta^{\mathrm{per}}$ of $\mathcal{L}_\beta$ satisfies Proposition \ref{small per refines strat}. Therefore it follows from Theorem \ref{ss agrees} 
that on $\overline{Y}_{(\tau)}$ GIT (semi)stability 
 with respect to $\mathcal{L}_\beta^{\mathrm{per}}$ of a point 
$\rho: V \otimes \mathcal{O}(-n) \rightarrow \mathcal{F}$ 
is equivalent to $\theta$-(semi)stability 
of the quotient sheaf $\mathcal{F}$ for $n$ and $m$ sufficiently large.  
\end{rmk}

\begin{defn}
Let $\mathcal{F}$ be a $\theta$-semistable $n$-rigidified sheaf of Harder--Narasimhan type $\tau$. A Jordan--H\"{o}lder filtration of $\mathcal{F}$ with respect to $\theta$ is a filtration
\[ 0 \subset \mathcal{F}^{\{ 1 \}} \subset \cdots \subset \mathcal{F}^{\{ r \}}= \mathcal{F} \]
such that:
\begin{enumerate} \item The successive quotients $\mathcal{F}^j:=\mathcal{F}^{\{j\}}/ \mathcal{F}^{\{j-1\}}$ are $\tau$-compatible and $\theta$-stable with
\[\frac{\sum_{i=1}^s \theta_i P(\mathcal{F}^j_i)}{P( \mathcal{F}^j)} = \frac{\sum \theta_i P(\mathcal{F}_i)}{P(\mathcal{F})}. \]
\item The $n$-rigidification for $\mathcal{F}$ induces generalised $n$-rigidifications for each $\mathcal{F}^j$; that is, an isomorphism $H^0(\mathcal{F}^j(n)) \cong \oplus_{i=1}^s H^0(\mathcal{F}^j_i(n))$ with the usual compatibilities.
\end{enumerate}
The associated graded sheaf $\oplus_{j=1}^r \mathcal{F}^j$ thus has an $n$-rigidification and is of Harder--Narasimhan type $\tau$. Moreover, this sheaf is $\theta$-polystable; i.e., a direct sum of $\theta$-stable sheaves. Standard arguments show that the $n$-rigidified sheaf $\oplus_{j=1}^r \mathcal{F}^j$ is uniquely determined up to isomorphism by $\mathcal{F}$.
Finally, we say two $\theta$-semistable $n$-rigidified sheaves $\mathcal{F}$ and $\mathcal{G}$ of Harder--Narasimhan type $\tau$ are S-equivalent if they have Jordan--H\"{o}lder filtrations such that the associated graded sheaves are isomorphic as $n$-rigidified sheaves. 
\end{defn}

\begin{rmk}\label{JH and S-equiv}
In exactly the same way as in the original proofs for S-equivalence of semistable sheaves, we see that the $\mathrm{Stab} \beta$-orbit closures in  $Y_{(\tau)}^{\theta -ss}$ of two $n$-rigidified sheaves $\mathcal{F}$ and $\mathcal{G}$ in $Y_{(\tau)}^{\theta -ss}$ intersect if and only if $\mathcal{F}$ and $\mathcal{G}$ are S-equivalent, and that the $\mathrm{Stab} \beta$-orbit of a point $\rho: V \otimes \mathcal{O}(-n) \rightarrow \mathcal{F}$ in $Y_{(\tau)}^{\theta -ss}$ is closed if and only if $\mathcal{F}$ is polystable. We briefly recap the argument here. From the general theory of GIT we know that the closure  in  $Y_{(\tau)}^{\theta -ss}$ of any $\mathrm{Stab} \beta$-orbit contains a unique closed $\mathrm{Stab} \beta$-orbit.
For any 
$\rho: V \otimes \mathcal{O}(-n) \rightarrow \mathcal{F}$ in $Y_{(\tau)}^{\theta -ss}$ we can choose a 1-PS whose limit as $t$ tends to zero is the graded object $\overline{\rho}: V \otimes \mathcal{O}(-n) \rightarrow \overline{\mathcal{F}}
$ associated to a Jordan--H\"{o}lder filtration of $\mathcal{F}$, so that $\overline{\mathcal{F}}
$ is polystable and $\overline{\rho}$ is in the orbit closure of $\rho$. 
Now suppose that 
$\rho: V \otimes \mathcal{O}(-n) \rightarrow \mathcal{F}$ is a point in $Y_{(\tau)}^{\theta -ss}$ such that $\mathcal{F}$ is a polystable sheaf $\mathcal{F} = \oplus \mathcal{F}_i$ and suppose that 
$\rho': V \otimes \mathcal{O}(-n) \rightarrow \mathcal{F'}$ in $Y_{(\tau)}^{\theta -ss}$ lies in the orbit closure of $\rho$. 
Then there is a family $\mathcal{V}$ of $\theta$-semistable sheaves parameterised 
by a curve $C$ such that $\mathcal{V}_{c_0} = \mathcal{F}'$ for some $c_0 \in C$ and for $c \neq c_o$ the corresponding sheaf is $\mathcal{V}_c = \mathcal{F}$. By semicontinuity
\[ \mathrm{hom}(\mathcal{F}_i, \mathcal{F}') \geq  \mathrm{hom}(\mathcal{F}_i, \mathcal{F}) \]
and by $\theta$-stability of $\mathcal{F}_i$ and $\theta$-semistability of $\mathcal{F}'$ we see that each nonzero morphism $\mathcal{F}_i \rightarrow \mathcal{F}'$ must be injective. 
From this we can conclude that $\mathcal{F}' \cong \oplus \mathcal{F}_i = \mathcal{F}$ and that $\rho'$ lies in the same $\mathrm{Stab} \beta$-orbit as $\rho$, so
the $\mathrm{Stab} \beta$-orbit of $\rho$ is closed. Thus
the unique closed $\mathrm{Stab} \beta$-orbit in the $\mathrm{Stab} \beta$-orbit closure in  $Y_{(\tau)}^{\theta -ss}$ of any point 
$\rho: V \otimes \mathcal{O}(-n) \rightarrow \mathcal{F}$
in $ Y_{(\tau)}^{\theta -ss}$ is the orbit of the graded object $\overline{\rho}: V \otimes \mathcal{O}(-n) \rightarrow \overline{\mathcal{F}}
$ associated to a Jordan--H\"{o}lder filtration of $\mathcal{F}$.
\end{rmk}

Just as for moduli of semistable sheaves over a projective scheme $W$
(cf. \cite{simpson} Theorem 1.21), we obtain
a projective scheme which corepresents the moduli functor of $\theta$-semistable $n$-rigidified sheaves of Harder--Narasimhan type $\tau$ over $W$,
in the sense of \cite{simpson} $\S$1 or \cite{ack} Definition 4.6.

\begin{thm} \label{thm6.16}
Let $W$ be a projective scheme over $\mathbb{C}$ and $\tau = (P_1, \dots, P_s)$ be a fixed Harder--Narasimhan type. For $\theta \in \mathbb{Q}^s$ and $n >\!> 0$ there is a projective scheme $M^{\theta-ss}(W, \tau, n)$ which corepresents the moduli functor $\mathcal{M}^{\theta-ss}(W, \tau, n)$ of $\theta$-semistable $n$-rigidified sheaves of Harder--Narasimhan type $\tau$ over $W$. The points of $M^{\theta-ss}(W, \tau,n)$ correspond to S-equivalence classes of $\theta$-semistable $n$-rigidified sheaves with Harder--Narasimhan type $\tau$.
\end{thm}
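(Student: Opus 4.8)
The plan is to realise the moduli space as the classical GIT quotient
\[
M^{\theta-ss}(W, \tau, n) := \overline{Y}_{(\tau)} /\!/_{\mathcal{L}_\beta^{\mathrm{per}}} \mathrm{Stab}\beta,
\]
and to extract all of its properties from the fact that $\mathrm{Stab}\beta = (\prod_{i=1}^s \mathrm{GL}(V_i)) \cap \mathrm{SL}(V)$ is reductive while $\mathcal{L}_\beta^{\mathrm{per}}$ is an ample linearisation on the projective scheme $\overline{Y}_{(\tau)}$. First I would invoke Mumford's GIT (\cite{mumford}) to obtain a projective scheme $M^{\theta-ss}(W,\tau,n)$ that is a good, hence categorical, quotient of the semistable locus $\overline{Y}_{(\tau)}^{\mathrm{Stab}\beta-ss}(\mathcal{L}_\beta^{\mathrm{per}})$. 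By Theorem \ref{ss agrees} together with Remark \ref{ss agrees rmk} this semistable locus is exactly the set $Y_{(\tau)}^{\theta-ss}$ of $\theta$-semistable $n$-rigidified sheaves, which sits inside $Y_{(\tau)}^{ss}$. This already yields projectivity and the point-set description: the good quotient identifies two points precisely when their $\mathrm{Stab}\beta$-orbit closures meet in $Y_{(\tau)}^{\theta-ss}$, which by Remark \ref{JH and S-equiv} is exactly S-equivalence, and each fibre contains a unique closed orbit, namely that of the associated graded $\theta$-polystable $n$-rigidified sheaf.

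Next I would establish corepresentability by constructing a natural transformation $\mathcal{M}^{\theta-ss}(W,\tau,n) \to \mathrm{Hom}(-, M^{\theta-ss}(W,\tau,n))$ and verifying its universality. Given a flat family $\mathcal{V}$ of $\theta$-semistable $n$-rigidified sheaves of Harder--Narasimhan type $\tau$ parametrised by $S$, the $n$-regularity provided by Lemma \ref{subsheaves bdd} shows that $\pi_{S*}(\mathcal{V}(n))$ is locally free of rank $P(n)$; working locally on $S$ and using the $n$-rigidification $H^0(\mathcal{V}(n)) \cong \oplus_i H^0(\mathcal{V}_i(n))$ to choose a trivialisation compatible with the weight decomposition $V = \oplus_i V_i$, I obtain an isomorphism $\pi_{S*}(\mathcal{V}(n)) \cong V \otimes \mathcal{O}_S$ of the required block form. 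The universal property of the quot scheme then furnishes a classifying morphism $S \to Q$ landing in $Y_{(\tau)}^{ss}$, and Lemma \ref{n-rig lemma} shows that the ambiguity in the choice of compatible trivialisation is exactly the $\mathrm{Stab}\beta$-action. Composing with the quotient map $Y_{(\tau)}^{\theta-ss} \to M^{\theta-ss}(W,\tau,n)$ therefore gives a canonical morphism $S \to M^{\theta-ss}(W,\tau,n)$ independent of the local choices, so the local maps glue; functoriality in $S$ follows since isomorphic families differ only by a line-bundle twist on $S$ and by the $\mathrm{Stab}\beta$-action, both invisible after passing to the quotient.

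For universality I would apply any natural transformation $\Phi : \mathcal{M}^{\theta-ss}(W,\tau,n) \to \mathrm{Hom}(-, N)$ to the tautological family on $Y_{(\tau)}^{\theta-ss}$ obtained by restricting the family of Lemma \ref{univ fam}. This produces a morphism $Y_{(\tau)}^{\theta-ss} \to N$ which is $\mathrm{Stab}\beta$-invariant, because points in a common orbit (and, more generally, S-equivalent points) carry isomorphic $n$-rigidified families and hence have the same image under $\Phi$. Since $M^{\theta-ss}(W,\tau,n)$ is a categorical quotient, this invariant morphism factors uniquely through $M^{\theta-ss}(W,\tau,n) \to N$, and naturality of $\Phi$ together with the local classifying construction above shows that this factorisation recovers $\Phi$ on every family. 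This is precisely the statement that $M^{\theta-ss}(W,\tau,n)$ corepresents the functor in the sense of \cite{simpson} $\S$1.

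The technical heart of the argument, and the step I expect to be the main obstacle, is the construction of the classifying morphism from an arbitrary family together with the verification of $\mathrm{Stab}\beta$-invariance. The essential conceptual point is that the $n$-rigidification data cuts the structure group of a family down from the non-reductive parabolic ambiguity (which would only supply a $P_\beta$-action, for which no GIT quotient is available) to the reductive group $\mathrm{Stab}\beta$; this is exactly what Lemma \ref{n-rig lemma} encodes, and it is what makes a GIT quotient possible at all. Checking that the local classifying maps are canonical up to $\mathrm{Stab}\beta$, that they glue, and that all isomorphisms of families (including the line-bundle twist in the definition of isomorphic families) descend to equalities after quotienting requires care but follows the standard pattern for moduli of sheaves (cf. \cite{simpson} and \cite{huybrechts}).
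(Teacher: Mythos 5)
Your proposal is correct and takes essentially the same route as the paper: the same GIT quotient $M^{\theta-ss}(W,\tau,n) := \overline{Y}_{(\tau)} /\!/_{\mathcal{L}_\beta^{\mathrm{per}}} \mathrm{Stab}\beta$, with Theorem \ref{ss agrees}, Remark \ref{ss agrees rmk}, Lemmas \ref{univ fam} and \ref{n-rig lemma}, and Remark \ref{JH and S-equiv} playing exactly the roles you assign them; the only difference is presentational, in that the paper packages your explicit local-trivialisation, gluing and universality argument functorially as a local isomorphism $\underline{Y_{(\tau)}}^{\theta-ss}/\underline{H} \rightarrow \mathcal{M}^{\theta-ss}(W,\tau,n)$ combined with \cite{ack} Lemma 4.7 (noting, as you implicitly do, that one may replace $\mathrm{Stab}\beta$ by $H = \prod_{i=1}^s \mathrm{GL}(V_i)$ since the central one-parameter subgroup acts trivially). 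One small caution: your parenthetical claim that S-equivalent points carry isomorphic $n$-rigidified families is false (S-equivalent sheaves need not be isomorphic), but it is also unnecessary, since $\mathrm{Stab}\beta$-invariance alone --- proved scheme-theoretically by applying naturality of $\Phi$ to the action morphism --- forces the factorisation through the categorical quotient, which then automatically identifies S-equivalence classes.
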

\begin{proof}
The proof is based on that of \cite{simpson} Theorem 1.21 (see also
\cite{ack} $\S$4).
Pick $n$ and $m$ as in the beginning of Theorem \ref{ss agrees}. For a complex scheme $R$ let $\underline{R}= \mathrm{Hom}( - , R)$ denote its functor of points, and if $R$ has a $G$-action then let $\underline{R} /\underline{G}$ denote the quotient functor. 

Let $\overline{Y}_{(\tau)}$ be the closure of ${Y}_{(\tau)}^{ss}$ as at the beginning of $\S$\ref{moduli unstable sheaves} and $\mathcal{L}_\beta^{\mathrm{per}}$ the linearisation defined in $\S$\ref{choice of pert linear}; then let
\[M^{\theta-ss}(W, \tau,n) := \overline{Y}_{(\tau)} /\!/_{\mathcal{L}_\beta^{\mathrm{per}}} \mathrm{Stab} \beta.\]
By Theorem \ref{ss agrees} and Remark \ref{ss agrees rmk} the projective scheme $M^{\theta-ss}(W, \tau,n) $ is a categorical quotient of the open subset $Y_{(\tau)}^{\theta-ss} \subseteq 
 \overline{Y}_{(\tau)}$ parameterising points $\rho : V \otimes \mathcal{O}(-n) \rightarrow \mathcal{F}$ of $\overline{Y}_{(\tau)}$ such that $\mathcal{F}$ is $\theta$-semistable for
the action of $\mathrm{Stab} \beta$, or equivalently by the action of $H:=\Pi_{i=1}^s \mathrm{GL}(V_i)$  since the central 1-PS $\mathbb{C}^* \subset \mathrm{GL}(V)$ acts trivially on $\overline{Y}_{(\tau)}$. 
The quotient map ${Y}_{(\tau)}^{\theta-ss} \rightarrow M^{\theta-ss}(W, \tau,n)$ is $H$-invariant and so induces a natural transformation 
\[\varphi_1 :  \underline{ Y_{(\tau)}}^{\theta-ss}/\underline{H} \rightarrow \underline{M^{\theta-ss}(W, \tau,n)},\]
and as $M^{\theta-ss}(W, \tau,n)$ is a categorical quotient it corepresents the quotient functor $\underline{ Y_{(\tau)}}^{\theta-ss}/\underline{H}$.

Let $\mathcal{V}$ denote the restriction to ${Y}_{(\tau)}^{\theta-ss}$ of the family of $\theta$-semistable $n$-rigidified sheaves of Harder--Narasimhan type $\tau$ parameterised by ${Y}_{(\tau)}^{ss}$ (cf. Lemma \ref{univ fam}). Then this family defines a natural transformation
\[ \phi : \underline{ Y_{(\tau)}}^{\theta-ss} \rightarrow \mathcal{M}^{\theta-ss}(X, \tau,n) \]
by sending a morphism $f : S \rightarrow Y_{(\tau)}^{\theta-ss}$ to the family $f^* \mathcal{V}$ for any scheme $S$. Following Lemma \ref{n-rig lemma} two elements of $ \underline{ Y_{(\tau)}}^{\theta-ss}(S)$ define isomorphic families if and only if locally on
$S$ they are related by an element of $\underline{H}(S)$, this descends to a local isomorphism (in the sense of \cite{simpson} $\S$1 or \cite{ack} Definition 4.3)
\[ \tilde{\phi} : \underline{ Y_{(\tau)}}^{\theta-ss}/ \underline{H} \rightarrow \mathcal{M}^{\theta-ss}(X, \tau, n).\]
Since local isomorphism means isomorphism after sheafification and $M^{\theta-ss}(W, \tau, n)$ corepresents $ \underline{ Y_{(\tau)}}^{\theta-ss}/\underline{H}$, it also corepresents $\mathcal{M}^{\theta-ss}(X, \tau, n)$ (cf. \cite{ack} Lemma 4.7). Finally the fact that the points of $M^{\theta-ss}(W, \tau, n)$ correspond to S-equivalence classes follows from Remark \ref{JH and S-equiv}.
\end{proof}

\bibliographystyle{amsplain}
\bibliography{refs_arxiv3}

\medskip \medskip

\noindent \textsc{Mathematical Institute, University of Oxford,}\\
\textsc{24-29 St. Giles', Oxford, OX1 3LB, UK.}
\medskip

\noindent \texttt{hoskins@maths.ox.ac.uk} \\  
 \texttt{kirwan@maths.ox.ac.uk}
\end{document}